\documentclass[11pt,reqno,twoside]{amsart}
\usepackage{mathrsfs}
\usepackage{amsfonts,amssymb,amsmath,amsthm}

\usepackage{cite}
\usepackage[colorlinks=true,citecolor=red,linkcolor=blue]{hyperref}
\newcommand{\secref}[1]{Section~\ref{#1}}
\newcommand{\subsecref}[1]{Subsection~\ref{#1}}
\newcommand{\thmref}[1]{Theorem~\ref{#1}}
\newcommand{\lemref}[1]{Lemma~\ref{#1}}
\newcommand{\propref}[1]{Proposition~\ref{#1}}
\newcommand{\corref}[1]{Corollary~\ref{#1}}
\usepackage{titletoc}
\usepackage{geometry}
\usepackage{bm}
\usepackage{indentfirst}
\usepackage{graphicx}
\usepackage{stmaryrd}
\usepackage{tikz}

\usepackage{color,soul}
\usepackage{setspace}
\usepackage[pagewise]{lineno}
\usepackage{float}

\usepackage{todonotes}

\def\T {{\mathbb T}}
\def\C {{\mathbb C}}

\def\R {\mathbb{R}}

\def\Z {\mathbb{Z}}
\def\d{{\,\rm d}}
\numberwithin{equation}{section}
\newtheorem{theorem}{Theorem}[section]
\newtheorem{lemma}[theorem]{Lemma}
\newtheorem{corollary}[theorem]{Corollary}
\newtheorem{proposition}[theorem]{Proposition}

\newtheorem*{definition*}{Definition}

\newtheorem{remark}[theorem]{Remark}
\theoremstyle{definition}

\begin{document}

\title[Controllable subspaces]{Controllable subspaces and real-part observability for Schr\"odinger equations on \(\T^d\)}

\author{Gengsheng Wang}

\address{HETAO Institute of Mathematics and Interdisciplinary Studies (Shenzhen), 518017, China}
\email{wanggs62@yeah.net}

 \author{Ming Wang}
\address{School of Mathematics and Statistics, HNP-LAMA, Central South University, Changsha, Hunan 410083, China}
\email{m.wang@csu.edu.cn}


\date{\today}
\begin{abstract}
We study internal controllability of Schr\"odinger equations on tori with
controls acting only through their real parts. This real-part constraint leads
to a real-linear control problem for which full null controllability may fail.
We characterize the maximal null-controllable subspace and show that the
uncontrollable directions are precisely given by the purely imaginary
stationary modes. These results provide an explicit description of
controllable and uncontrollable directions for Schr\"odinger equations with
real-part controls.

The characterization is obtained through observability inequalities with sharp
stationary correction terms. For cylindrical open control regions, we prove
such an estimate by a real-part compactness--uniqueness argument. For
admissible measurable product control regions in the shifted free case, we
establish the corresponding estimate for rough spacetime control sets. The
proof combines complex-valued observability inputs with a double-spectrum
analysis arising from the coupling of the forward and backward Schr\"odinger
evolutions in the real-part observation.
\end{abstract}

\maketitle
\tableofcontents
\section{Introduction and main results}
\label{sec:introduction}
\subsection{Motivation and problem}
\label{subsec:motivation}

Controllability of linear systems is a fundamental topic in control theory.
For partial differential equations, much of the classical literature focuses on
the question of whether every initial state can be driven to zero by a
suitable control. In this paper, we consider a finer structural problem:
when full null controllability fails, which initial states remain
controllable? Equivalently, how can one characterize the controllable
subspace of the system?

For a well-posed linear control system with state space \(H\) and free evolution
\(S(t)\), the controllable subspace at time \(T>0\) consists of all initial data
that can be driven to zero at time \(T\). If \(\mathcal R_T\) denotes the
reachable space from the origin at time \(T\), then an initial datum \(y_0\)
belongs to the controllable subspace at time \(T\) if and only if
\[
	S(T)y_0\in\mathcal R_T .
\]
Thus, the characterization of controllable subspaces is closely related to the
description of reachable spaces. In finite-dimensional systems, Kalman's theory
gives an explicit characterization of reachable and controllable subspaces. In
contrast, such a characterization is much more subtle for infinite-dimensional
systems governed by partial differential equations; see, for instance,
\cite{Tucsnak2022,DardeErvedoza2018,HartmannKellayTucsnak2020}.

The characterization of controllable subspaces in infinite-dimensional systems
is also closely related to finite-codimensionality in optimal control theory;
see \cite{LiYong1995}. Rather than requiring controllability in the whole state
space, one asks whether the uncontrollable directions can be characterized as
a finite-dimensional obstruction. Such conditions are closely connected with
G{\aa}rding-type estimates for the adjoint system; see
\cite{LiuLuZhang2020}.

We study the characterization of controllable subspaces for Schr\"odinger
equations on the torus with real-part controls. Here the control enters the
equation as a forcing term only through its real part. The system is therefore
linear in the state variable, but the control map is real-linear rather than
complex-linear. The natural framework is consequently the real Hilbert
structure of \(L^2(\mathbb T^d;\mathbb C)\). In this setting, the failure of
full null controllability may result from a structural obstruction.

The main purpose of this paper is to identify the maximal null-controllable
subspaces for Schr\"odinger systems with real-part controls in different
geometric settings. Our main results provide explicit characterizations of
these subspaces and identify the stationary purely imaginary modes as the
precise obstruction preventing full controllability. Thus, the results give a
structural description of controllable and uncontrollable directions, rather
than only providing criteria for full controllability.

\subsection{Main results}
\label{subsec:main-results}

We now give the precise formulation. Denote by
\[
\T^d=\R^d/(2\pi\Z)^d
\]
the \(d\)-dimensional torus. Fix \(T>0\), a measurable set
\(G\subset (0,T)\times\T^d\), and a real-valued continuous potential
\(V\in C(\T^d;\R)\). We consider the controlled Schr\"odinger equation
\begin{equation}\label{equ-sch-control}
	\left\{
	\begin{aligned}
		\partial_tu
		&=
		i(\Delta-V(x))u+\mathbf 1_G(t,x)\operatorname{Re} f(t,x),
		\quad (t,x)\in (0,T)\times \T^d,\\
		u|_{t=0}&=u_0(x).
	\end{aligned}
	\right.
\end{equation}
Here \(\mathbf 1_G\) denotes the characteristic function of \(G\), and
\(f\in L^2((0,T)\times\T^d;\C)\) is the control. Only
\(\operatorname{Re} f\) acts on the system. Thus, the effective control space
is the real-valued space
\[
L^2((0,T)\times\T^d;\R).
\]
Throughout the controllability and duality arguments, the state space
\(L^2(\T^d;\C)\) is regarded as a real Hilbert space, endowed with the real
part of the usual complex \(L^2\)-inner product:
\begin{equation}\label{equ-Re-inner}
	\langle u,v\rangle_{\R}
	=
	\operatorname{Re}\int_{\T^d}u(x)\overline{v(x)}\,\d x,
	\qquad u,v\in L^2(\T^d;\C).
\end{equation}

For \eqref{equ-sch-control}, we denote by \(X\) the controllable subspace at
time \(T\). Namely, \(X\) is the set of all initial data
\(u_0\in L^2(\T^d;\C)\) for which there exists a control
\(f\in L^2((0,T)\times\T^d;\C)\) such that the corresponding solution of
\eqref{equ-sch-control} satisfies \(u(T)=0\). Since only
\(\operatorname{Re}f\) acts on the system, \(X\) is a real-linear subspace of
\(L^2(\T^d;\C)\), but it need not be a complex-linear subspace.

  To formulate the main results, we first describe the stationary obstruction.
Let \(\mathcal K\) be the stationary kernel of the Schr\"odinger operator,
namely
\begin{equation}\label{1.3-7-14-w}
\mathcal K=\ker(-\Delta+V),
\end{equation}
which is invariant under complex conjugation since \(V\) is real-valued. We
define the purely imaginary stationary subspace by
\begin{equation}\label{equ-ker-ReIm}
	\mathcal K_{\operatorname{Im}}
	=
	\{i\,\operatorname{Im}\phi:\phi\in\mathcal K\}.
\end{equation}
The real orthogonal complement of \(\mathcal K_{\operatorname{Im}}\) with
respect to the inner product \eqref{equ-Re-inner} is denoted by
\begin{equation}\label{1.5-7-14-w}
\mathcal K_{\operatorname{Im}}^{\perp_\R}
=
\{u\in L^2(\T^d;\C):
\langle u,v\rangle_\R=0
\text{ for all }v\in\mathcal K_{\operatorname{Im}}\}.
\end{equation}

Our first main result identifies the controllable subspace for cylindrical open
control regions.

\begin{theorem}[Controllable subspace for cylindrical open regions]
	\label{thm-controlspace}
	Let \(X\) be the controllable subspace at time \(T\) for
	\eqref{equ-sch-control}, with \(G=(0,T)\times E\), where
	\(E\subset\T^d\) is nonempty and open. Then
	\[
	X
	=
	\mathcal K_{\operatorname{Im}}^{\perp_\R},
	\]
	where \(\mathcal K_{\operatorname{Im}}^{\perp_\R}\) is given by
	\eqref{1.5-7-14-w}.
\end{theorem}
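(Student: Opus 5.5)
The proof has two inclusions. The inclusion \(X\subset\mathcal K_{\operatorname{Im}}^{\perp_\R}\) comes from a conservation law. The reverse inclusion comes from an observability inequality with a stationary correction term, combined with real-Hilbert-space duality (HUM). Write \(A=i(\Delta-V)\). It is skew-adjoint also for \(\langle\cdot,\cdot\rangle_\R\), and \(e^{tA}\) is unitary for this inner product. The control operator is \(B g=\mathbf 1_G g\) on \(L^2((0,T)\times\T^d;\R)\), and its real adjoint is \(B^*\varphi=\mathbf 1_G\operatorname{Re}\varphi\).

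\emph{Step 1: \(X\subset\mathcal K_{\operatorname{Im}}^{\perp_\R}\).} Take \(\psi\) real with \(\psi\in\mathcal K\). Then \(e^{tA}(i\psi)=i\psi\), so for any solution of \eqref{equ-sch-control} we have
\[
\tfrac{d}{dt}\langle u(t),i\psi\rangle_\R=\langle \mathbf 1_G\operatorname{Re}f,i\psi\rangle_\R=\operatorname{Re}\int \mathbf 1_G\operatorname{Re}f\,\overline{i\psi}\,\d x=0 .
\]
Hence \(u(T)=0\) forces \(\langle u_0,i\psi\rangle_\R=0\). Since \(\mathcal K\) is conjugation invariant, \(\mathcal K_{\operatorname{Im}}=\{i\psi:\psi\in\mathcal K\ \text{real}\}\), and the inclusion follows.

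\emph{Step 2: reduction of the converse to observability.} Let \(P\) be the real orthogonal projection onto the finite-dimensional space \(\mathcal K_{\operatorname{Im}}\). By the standard duality argument in real Hilbert spaces, it suffices to prove
\begin{equation*}
\|\varphi_0\|_{L^2}^2\le C\Big(\int_0^T\!\!\int_E|\operatorname{Re}(e^{tA}\varphi_0)|^2\,\d x\,\d t+\|P\varphi_0\|_{L^2}^2\Big)\qquad\forall\varphi_0\in L^2(\T^d;\C).
\end{equation*}
Indeed, both \(\mathcal K_{\operatorname{Im}}\) and its complement are invariant under \(e^{tA}\), and the Re-observation of \(\mathcal K_{\operatorname{Im}}\) vanishes. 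So this inequality gives exact observability of the adjoint system restricted to \(\mathcal K_{\operatorname{Im}}^{\perp_\R}\). Minimizing the usual HUM functional over \(\varphi_0\in\mathcal K_{\operatorname{Im}}^{\perp_\R}\) then produces a real control that steers any \(u_0\in\mathcal K_{\operatorname{Im}}^{\perp_\R}\) to zero. This works because the component of \(u(T)\) along \(\mathcal K_{\operatorname{Im}}\) already vanishes by Step 1.

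\emph{Step 3: weak observability, the main obstacle.} I will prove the inequality with \(\|P\varphi_0\|\) replaced by a compact term \(\|\Pi_N\varphi_0\|\), where \(\Pi_N\) is the spectral projection onto the eigenvalues \(|\lambda_k|\le N\) of \(-\Delta+V\). I use a real orthonormal eigenbasis \(e_k\), which exists because \(V\) is real.

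For \(\varphi_0=\sum a_ke_k\) we have
\[
\operatorname{Re}\varphi(t)=\tfrac12\sum_k\big(a_ke^{-i\lambda_kt}+\overline{a_k}e^{i\lambda_kt}\big)e_k .
\]
So \(\operatorname{Re}\varphi=\tfrac12(\varphi+\tilde\varphi)\), where \(\tilde\varphi=\bar\varphi\) evolves by the reversed group. Because \(\lambda_k\ge-\|V\|_\infty\), on the range of \(I-\Pi_N\) the forward and backward spectra \(\{-\lambda_k\}\) and \(\{\lambda_k\}\) are separated by a gap of size at least \(2N\).

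I plan to use the known complex-valued observability for \(e^{tA}\) from any nonempty open \(E\) on \(\T^d\) with continuous \(V\), valid in any time \(T\), as an input. I would apply it to \(\varphi\) and to \(\bar\varphi\) separately. The cross term \(\operatorname{Re}\int_0^T\!\!\int_E\varphi\,\overline{\bar\varphi}\,\d x\,\d t\) must then be made small relative to \(\|\varphi_0\|^2\) for high frequencies. This is where I expect the real difficulty. My first attempt will be to insert a smooth time cutoff \(\chi(t)\) supported in \((0,T)\). The cross term then becomes an oscillatory integral involving \(\hat\chi(\lambda_j+\lambda_k)\), with \(\lambda_j+\lambda_k\ge 2N-2\|V\|_\infty\), which decays rapidly. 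A Schur test, with the Weyl bound on eigenvalue multiplicities in windows, should then make the cross term \(o(1)\|\varphi_0\|^2\) as \(N\to\infty\).

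\emph{Step 4: compactness and uniqueness.} I run the standard contradiction argument. Since \(\Pi_N\) has finite rank, it reduces to identifying the space \(\mathcal N_T\) of \(\varphi_0\) with \(\operatorname{Re}(e^{tA}\varphi_0)=0\) on \((0,T)\times E\), and showing \(\mathcal N_T=\mathcal K_{\operatorname{Im}}\). The weak estimate shows \(\mathcal N_T\) is finite dimensional. The usual argument comparing \(\mathcal N_T\) with \(\mathcal N_{T-\varepsilon}\) shows it is \(A\)-invariant, so it is spanned by eigenvectors. Grouping by frequency \(\mu\), an invisible element gives functions \(f\in\ker(-\Delta+V-\mu)\) and \(g\in\ker(-\Delta+V+\mu)\) with \(f+g=0\) on \(E\).

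If \(\mu\ne0\), applying \(-\Delta+V\) on \(E\) gives \(\mu(f-g)=0\) there, hence \(f=g=0\) on \(E\). Unique continuation for eigenfunctions, which holds for continuous \(V\) by Carleman estimates, then gives \(f=g=0\). If \(\mu=0\), the condition reads \(2\operatorname{Re}\psi=0\) on \(E\) with \(\psi\in\mathcal K\), hence \(\operatorname{Re}\psi\equiv0\) and \(\varphi_0=\psi\in\mathcal K_{\operatorname{Im}}\).

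With \(\mathcal N_T=\mathcal K_{\operatorname{Im}}\), the compactness argument upgrades the weak estimate to the inequality of Step 2, and Step 2 completes the proof.
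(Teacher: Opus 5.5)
Your proposal is correct and follows essentially the same route as the paper: real-Hilbert-space duality together with the conservation law along $\mathcal K_{\operatorname{Im}}$; a weak observability estimate obtained from complex observability with a time cutoff, where the cross term $\widehat{\chi}(\lambda_j+\lambda_k)$ is controlled via a Weyl-type count; and a compactness--uniqueness argument identifying $\mathcal N_T=\mathcal K_{\operatorname{Im}}$ through $A$-invariance, finite dimensionality and elliptic unique continuation. The only differences are cosmetic: you identify $\mathcal N_T$ by grouping complex eigencomponents by time frequency (pairs $f\in\ker(H-\mu)$, $g\in\ker(H+\mu)$ with $f+g=0$ on $E$), whereas the paper rules out rotation blocks in the real skew-symmetric normal form, which is the same computation written differently.
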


The significance of \thmref{thm-controlspace} is that it gives an explicit
characterization of the maximal null-controllable subspace. In particular, it
identifies the precise obstruction to controllability for the Schr\"odinger
system with real-part controls.

Several comments on \thmref{thm-controlspace} are in order.

\begin{itemize}
	\item[(i)]
	The formula shows that \(X\) is independent of \(T\) and of the particular
	nonempty open set \(E\), but depends on \(V\) through the stationary kernel
	\(\mathcal K=\ker(-\Delta+V)\). Thus, for cylindrical open control regions,
	the control geometry introduces no additional obstruction. Equivalently, the
	reachable set from the origin at time \(T\) is the same real subspace
	\(\mathcal K_{\operatorname{Im}}^{\perp_\mathbb R}\).

	\item[(ii)]
	If \(V\geq 0\) and
	\[
	\int_{\T^d}V(x)\,\d x>0,
	\]
	then the standard energy identity implies
	\(\mathcal K=\{0\}\); see Lemma \ref{lem-positive-V}. Hence, by
	\thmref{thm-controlspace},
	\[
	X=L^2(\T^d;\C).
	\]

	\item[(iii)]
	If \(V=0\), then \(\mathcal K\) consists of the constant functions. Hence
	\thmref{thm-controlspace} gives
	\[
	X
	=
	\left\{
	\phi\in L^2(\T^d;\C):
	\operatorname{Im}\int_{\T^d}\phi(x)\,\d x=0
	\right\}.
	\]
	Thus the only obstruction is the imaginary part of the spatial average.
\end{itemize}

Our second result treats the special case \(V\equiv -c\), namely the
shifted free Schr\"odinger equation, with an admissible measurable
product control region in spacetime. We set
\begin{equation}\label{eq:time-slab-Q}
	Q=(0,2\pi)\times\mathbb T^d.
\end{equation}
For \(c\in\R\), we consider the shifted free controlled equation
\begin{equation}\label{equ-shifted-control}
  \left\{
  \begin{aligned}
    \partial_tu
    &=
    i(\Delta+c)u+\mathbf 1_G(t,x)\operatorname{Re} f(t,x),
    \quad (t,x)\in Q,\\
    u|_{t=0}&=u_0(x).
  \end{aligned}
  \right.
\end{equation}

The admissible control regions are defined as follows.
We write
\begin{equation}\label{1.8-7-27-w}
z=(z_0,\ldots,z_d)=(t,x_1,\ldots,x_d),
\end{equation}
so that the time variable has index \(0\). A measurable set
\(G\subset Q\) is called an admissible measurable product set if the index set \(\{0,\ldots,d\}\) can be partitioned into blocks
\begin{equation}\label{1.9-7-27-w}
\{0,\ldots,d\}=\alpha_1\sqcup\cdots\sqcup\alpha_m,
\qquad
1\leq \#\alpha_j\leq 2 .
\end{equation}
For each block \(\alpha_j\), we denote the corresponding variables by
\begin{equation}\label{eq:block-variable-definition}
z_{\alpha_j}:=(z_\ell)_{\ell\in\alpha_j}.
\end{equation}
We assume that there exist measurable sets
\(G_j\subset Q_{\alpha_j}\), each of positive measure in its coordinate
space, such that
\begin{equation}\label{1.10-7-27-w}
G=\prod_{j=1}^m G_j,
\end{equation}
where
\[
Q_0=(0,2\pi),\qquad Q_\ell=\mathbb T\quad(\ell\geq1),
\qquad
Q_{\alpha_j}:=\prod_{\ell\in\alpha_j}Q_\ell .
\]

To describe the obstruction in this shifted free case, let
\begin{equation}\label{1.7-7-14-w}
S=\{n\in\Z^d:\ |n|^2-c=0\},
\end{equation}
and let \(\mathcal K_c\) be the span of the corresponding Fourier modes:
\begin{equation}\label{1.8-7-14-w}
\mathcal K_c=\operatorname{span}\{e^{i n\cdot x}:n\in S\}.
\end{equation}
We define
\begin{equation}\label{1.9-7-14-w}
(\mathcal K_c)_{\operatorname{Im}}
=
\{i\,\operatorname{Im}\phi:\phi\in\mathcal K_c\}.
\end{equation}
The real orthogonal complement of \((\mathcal K_c)_{\operatorname{Im}}\) with
respect to the inner product \eqref{equ-Re-inner} is denoted by
\begin{equation}\label{1.10-7-14-w}
\bigl((\mathcal K_c)_{\operatorname{Im}}\bigr)^{\perp_\R}
=
\left\{
u\in L^2(\T^d;\C):
\langle u,v\rangle_\R=0
\text{ for all }v\in(\mathcal K_c)_{\operatorname{Im}}
\right\}.
\end{equation}
We note that \eqref{equ-shifted-control} is the special case
\(V\equiv -c\) of \eqref{equ-sch-control}. Under this specialization,
\[
\mathcal K
=
\ker(-\Delta-c)
=
\mathcal K_c.
\]
Consequently, the spaces \((\mathcal K_c)_{\operatorname{Im}}\) and \(\bigl((\mathcal K_c)_{\operatorname{Im}}\bigr)^{\perp_\R}\) defined above coincide with the spaces
\(\mathcal K_{\operatorname{Im}}\) and
\(\mathcal K_{\operatorname{Im}}^{\perp_\R}\), respectively, defined in
\eqref{equ-ker-ReIm} and \eqref{1.5-7-14-w}, in the special case
\(V\equiv -c\).

\begin{theorem}[Controllable subspace for admissible measurable product regions]
	\label{thm:measurable-control}
	Let \(X\) be the controllable subspace at time \(2\pi\) for
	\eqref{equ-shifted-control}, where \(G\subset Q\) is an admissible measurable
	product set. Then
	\[
	X
	=
	\bigl((\mathcal K_c)_{\operatorname{Im}}\bigr)^{\perp_\R},
	\]
	where \(\bigl((\mathcal K_c)_{\operatorname{Im}}\bigr)^{\perp_\R}\) is
	defined by \eqref{1.10-7-14-w}.
\end{theorem}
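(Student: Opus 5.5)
We argue by duality in the real Hilbert space \(L^2(\T^d;\C)\) with inner product \eqref{equ-Re-inner}. Write \(A=i(\Delta+c)\), which is skew-adjoint, and \(B:L^2(Q;\R)\to L^2(\T^d;\C)\) for the real-linear control map \(Bf=\int_0^{2\pi}e^{(2\pi-s)A}\mathbf 1_G f(s)\,\d s\). A datum \(u_0\) lies in \(X\) if and only if \(e^{2\pi A}u_0\in\operatorname{Ran}B\). The adjoint of \(B\) with respect to \(\langle\cdot,\cdot\rangle_\R\) is \(B^*\varphi_T=\mathbf 1_G\operatorname{Re}\varphi(t)\), where \(\varphi(t)=e^{-(2\pi-t)A}\varphi_T\) solves the adjoint Schr\"odinger equation. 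The range of \(B\) will be identified through the real-part observability inequality with stationary correction
\begin{equation*}
\|\varphi_T-P\varphi_T\|_{L^2}^2\le C\iint_G|\operatorname{Re}\varphi(t,x)|^2\,\d t\,\d x ,
\end{equation*}
where \(P\) is the real-orthogonal projection onto \((\mathcal K_c)_{\operatorname{Im}}\). For this to be sharp we need \(\ker B^*=(\mathcal K_c)_{\operatorname{Im}}\). Indeed, if \(\varphi_T=i\operatorname{Im}\phi\) with \(\phi\in\mathcal K_c\), then \(A\varphi_T=0\). So \(\varphi\) is stationary and purely imaginary, and \(\operatorname{Re}\varphi\equiv0\). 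Given the inequality together with \(P\)-invariance under the flow, the standard closed-range and duality argument (Lax--Milgram, or an HUM-type minimization restricted to \(((\mathcal K_c)_{\operatorname{Im}})^{\perp_\R}\)) gives \(\operatorname{Ran}B=\bigl((\mathcal K_c)_{\operatorname{Im}}\bigr)^{\perp_\R}\). Since \(e^{2\pi A}\) is the identity on \(\mathcal K_c\) and preserves real orthogonality to the invariant space \((\mathcal K_c)_{\operatorname{Im}}\), we conclude \(X=\bigl((\mathcal K_c)_{\operatorname{Im}}\bigr)^{\perp_\R}\).

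To prove the inequality, expand \(\varphi_T=\sum_n a_ne^{in\cdot x}\), so that \(\varphi(t)=\sum_n a_ne^{-i(|n|^2-c)(2\pi-t)}e^{in\cdot x}\). Then
\begin{equation*}
2\operatorname{Re}\varphi=\sum_n\bigl(a_n e^{-i\lambda_n(2\pi-t)}+\overline{a_{-n}}e^{i\lambda_{n}(2\pi-t)}\bigr)e^{in\cdot x},\qquad \lambda_n=|n|^2-c .
\end{equation*}
This is a function on \(Q\) whose space-time spectrum lies in the double spectrum \(\{(\pm\lambda_n,n)\}\). The only coincidences between the two branches occur when \(\lambda_n=-\lambda_n\), that is, on \(S\). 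There the two coefficients combine into \(a_n+\overline{a_{-n}}\), which vanishes exactly on \((\mathcal K_c)_{\operatorname{Im}}\). Everywhere else the double spectrum is a union of two lattices of frequencies \((\pm\lambda_n,n)\in\Z^{1+d}\) (up to the constant shift \(c\) in time), and each frequency is hit by at most finitely many, in fact boundedly many, pairs. The task is then to show that \(\sum_{k\in\Lambda}b_ke^{ik\cdot z}\), with spectrum in this set \(\Lambda\), satisfies \(\|b\|_{\ell^2}^2\le C\|\cdot\|_{L^2(G)}^2\) for the admissible product set \(G\).

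This step is the main obstacle. My first attempt would use the product structure, applying the observability inputs block by block in the variables \(z_{\alpha_j}\). A one-dimensional block \(\alpha_j=\{\ell\}\) needs a Logvinenko--Sereda or Nazarov-type inequality on a measurable subset of \(\T\), and a two-dimensional block needs an estimate for \(2\)D functions with spectrum on the parabola-type set. These are the complex-valued observability inputs the abstract alludes to. I would iterate them in the product variables, in a Fubini fashion, on the complex function \(2\operatorname{Re}\varphi\). Its spectrum is \(\Lambda\), whose restriction to each block has the required sparsity structure (parabolic in \((t,x_\ell)\), arbitrary lattice in the spatial-only blocks, where \(\T\) is compact so finite-measure subsets still observe in a weighted sense). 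If full uniform estimates fail at low frequencies, I would prove the estimate modulo a compact term and remove it by a compactness--uniqueness argument. The uniqueness part reduces, via analyticity in \(t\) of trigonometric sums and positive measure of \(G\), to \(\operatorname{Re}\varphi\equiv0\) on \(Q\), and hence \(\varphi_T\in(\mathcal K_c)_{\operatorname{Im}}\) by the coefficient computation above.
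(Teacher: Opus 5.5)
Your duality half is correct and is essentially the paper's argument: real-part HUM on the flow-invariant subspace $((\mathcal K_c)_{\operatorname{Im}})^{\perp_\R}$, together with the stationary obstruction. Reducing to an observability estimate for functions with spectrum in the double spectrum $\Sigma_c$ is also what the paper does. The gap is in that estimate, which carries the whole analytic content of the theorem.

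A Fubini iteration of per-block inequalities does not work, because the sections of $F\in\mathcal X$ in a block variable have spectra obtained by projecting $\Sigma_c$, and these are not sparse. For a spatial block $\alpha_j=\{\ell\}$, consider $F=\sum_{k\in\Z}\alpha_k e^{i(kx_\ell-(k^2-c)t)}\in\mathcal X$. Its $x_\ell$-section at a fixed time can be an arbitrary function in $L^2(\T)$. Hence no inequality on $G_j$ with $|\T\setminus G_j|>0$ can hold for such sections, weighted or not. For a block containing $t$, the sections have spectrum in the union over $n'$ of the shifted parabolas $\tau=\pm(n_\ell^2+|n'|^2-c)$. For $d=1$ with the single block $\{0,1\}$, the input you invoke is literally the double-parabola estimate you are trying to prove.

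The actual Burq--Zhu inputs are not per-block estimates. They are:
complex observability for a \emph{single} branch $e^{it\Delta}f$;
the mixed-norm bounds in $L^4_{z_{\alpha_j}}L^2_{z_{\widehat{\alpha_j}}}$;
approximability of $\mathbf 1_G$ by trigonometric polynomials in multiplier norm.
Even granting observability of each branch $u_\pm=e^{\pm ict}e^{\pm it\Delta}(\cdot)$, nothing in your plan prevents $\mathbf 1_Gu_+$ and $\mathbf 1_Gu_-$ from nearly cancelling.

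The missing idea is the paper's high-frequency step. Replace $\mathbf 1_G$ by a real trigonometric polynomial $\zeta$ whose multiplier error is at most $\varepsilon$ on both branches. Expand $\|\zeta F\|^2=\|\zeta u_+\|^2+\|\zeta u_-\|^2+2\operatorname{Re}\langle\zeta u_+,\zeta u_-\rangle$. For spatial frequencies $|n|>N$ the cross term is $O(N^{-2})\|u_+\|\,\|u_-\|$, because $|q_0-r_0-\mu_n-\mu_{n+h}|\gtrsim N^2$ for the finitely many frequency differences $(q_0-r_0,h)$ of $\zeta$.

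Your low-frequency fallback also fails as stated. Once a high-frequency estimate is in hand, the kernel $\{F\in\mathcal X:\ F=0\text{ a.e. on }G\}$ is finite-dimensional. However, its elements are in general infinite spectral sums, since their high-frequency part is determined by their low-frequency part. So analyticity in $t$ is not available. The device used for cylindrical sets is also unavailable: there, invariance of the kernel under the generator comes from time translations of $(0,T)\times E$, and a measurable spacetime set is not translation invariant.

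The paper instead absorbs the finitely many low frequencies one at a time. The mixed-norm bounds give stability of observability under small removals from the factors $G_j$. One then chooses a small non-resonant translation $\xi$ and a product set $I$ with $I\subset G$, $I-\xi\subset G$ and $|G_j\setminus I_j|<\delta$. The difference operator $D_\xi=T_\xi-e^{-i\lambda\cdot\xi}$ kills the new mode $e_\lambda$ and maps the limit $h=f+ae_\lambda$ into the old space. Since $D_\xi h$ vanishes on $I$, it vanishes on $Q$, which forces $f=0$ and then $a=0$.

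A smaller inaccuracy: the two branches are shifted by $-c$ and $+c$ respectively, not by a common shift. So for $2c\notin\Z$ the pairs $e^{\mp i\mu_nt}$ are not orthogonal on $(0,2\pi)$. Passing from $\|F\|_{L^2(Q)}$ to $\|b\|_{\ell^2}$ therefore requires the uniform Gram bound $\sup_{\mu_n\neq0}\bigl|\int_0^{2\pi}e^{-2i\mu_nt}\,\d t\bigr|<2\pi$.
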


The shifted free setting allows one to exploit the explicit lattice dispersion
relation
\[
\mu_n=|n|^2-c,
\]
which is essential in separating the two spectral branches arising from the
real-part observation on measurable spacetime product sets.

Several comments on \thmref{thm:measurable-control} are in order.

\begin{itemize}
	\item[(i)]
	The formula shows that \(X\) is independent of the particular admissible
	measurable product set \(G\). Thus, within this class of control regions,
	the geometry of \(G\) creates no additional obstruction. Equivalently, the
	reachable set from the origin at time \(2\pi\) is the same real subspace
	\(\bigl((\mathcal K_c)_{\operatorname{Im}}\bigr)^{\perp_\R}\).

		\item[(ii)]
	The dependence of the obstruction on \(c\) is entirely determined by the
	stationary set
	\[
	S=\{n\in\Z^d:\ |n|^2=c\}.
	\]
	Indeed, by \thmref{thm:measurable-control}, the obstruction is precisely
	the purely imaginary part of the stationary eigenspace generated by these
	Fourier modes.

	\item[(iii)]
	In particular,
	\[
	X=L^2(\T^d;\C)
	\]
	if and only if
	\[
	c\notin\{|n|^2:n\in\Z^d\}.
	\]
	This is equivalent to \(S=\emptyset\), and hence to
	\(\mathcal K_c=\{0\}\). Thus the shifted free equation is controllable in
	the whole space precisely when there are no stationary Fourier modes.
\end{itemize}

\subsection{Novelty and relation to previous work}

The main novelty lies in the explicit identification of the maximal
null-controllable subspace and in the proof of sharp real-part
observability inequalities with stationary correction terms.

The controllability problem is naturally connected with observability through
duality. Since the control acts only through its real part, the state space is
considered as a real Hilbert space, and the controllable-subspace
characterization is reduced to observability estimates for the real part of
the adjoint Schr\"odinger evolution. This real-part observation differs from
the usual complex-valued observation, since
\[
\operatorname{Re}(e^{it(\Delta+c)}\varphi)
=
\frac12\left(
e^{it(\Delta+c)}\varphi+
e^{-it(\Delta+c)}\overline{\varphi}
\right),
\]
which couples the forward and backward Schr\"odinger evolutions.

\thmref{thm-controlspace} gives the controllable subspace for cylindrical open
control regions. Its proof develops a real-part compactness--uniqueness
framework, which differs from the standard complex-valued argument. The key
point is to characterize the kernel of the real-part observation and to show
that no additional obstruction arises from the coupling of the two spectral
components. This is achieved through a finite-dimensional unique-continuation
mechanism combined with a high-frequency oscillatory estimate.

\thmref{thm:measurable-control} establishes the corresponding
controllable-subspace characterization for admissible measurable product
control regions in the shifted free case. Compared with
\thmref{thm-controlspace}, the operator is simpler, but the measurable
spacetime control region has considerably less regularity. The overall
controllability-observability framework remains the same; however, the
observability estimate requires a different analysis, since the
compactness--uniqueness argument based on unique continuation is no longer
directly applicable. The proof instead exploits the measurable product
structure and the double-spectrum coupling induced by the real-part
observation.

Schr\"odinger equations with real-valued controls have also been studied in
quantum control; see, for instance,
\cite{ChambrionMasonSigalottiBoscain2009,BeauchardLaurent2010,
BoscainCaponigroChambrionSigalotti2012}.
In those works, the control enters the Hamiltonian in an affine way, and the
main focus is on controllability properties of the resulting Schr\"odinger
systems. This differs from the present problem, which concerns the
characterization of controllable subspaces for Schr\"odinger equations with
real-part controls.

Complex-valued observability for Schr\"odinger equations on tori has been
studied by many methods; see, for instance,
\cite{Ingham1936,Kahane1962,Haraux,Jaffard,Komornic,Taufer2023,JamingWW26,
Wunsch2017,Zygmund1972,AlphonseTzvetkov2025,RauchTaylor1975,Macia2010,
AnaJEMS2014,BZworski12,BBZ13JEMS,Bourgain14,BZworski19,LaurentInternel,
TaoZhongkai,LeBalchMartin2023,NiuWangXiang2025}
and the references therein. In particular, Burq--Zhu
\cite{BurqZhu-spacetime-meas,BurqZhuResolvent2025} established complex-valued
observability from admissible measurable product sets and developed the
auxiliary estimates used as inputs in our argument.

The method of Burq--Zhu does not directly apply to the real-part observation
considered here. Their observation concerns the complex-valued evolution
\(e^{it(\Delta+c)}\varphi\), whereas the real-part observation couples the
forward and backward Schr\"odinger evolutions. To overcome this difficulty,
we combine the complex-valued observability results and the auxiliary
estimates developed by Burq--Zhu with a double-spectrum analysis adapted to
this coupling. The interaction between the two spectral branches at high
frequencies is controlled through an approximation argument, while the
remaining finitely many low frequencies are handled by product stability and
translation differences.

\subsection{Strategy of proof}
\label{subsec:strategy}

We briefly describe the main steps of the proof. Since the control acts only
through its real part, the natural framework is the real Hilbert space
structure of \(L^2(\mathbb T^d;\mathbb C)\). Real Hilbert space duality reduces
the controllability problem to observability estimates for the real part of
the adjoint Schr\"odinger evolution. The stationary purely imaginary modes
appear as the unavoidable correction terms in these estimates.

Let \(\mathcal K=\ker(-\Delta+V)\), and let \(\mathcal K_{\operatorname{Im}}\)
be defined by \eqref{equ-ker-ReIm}. If \(\Pi_{\mathcal K}\) denotes the usual
complex \(L^2\)-orthogonal projection onto \(\mathcal K\), we define
\begin{equation}\label{eq:P-Kim-definition}
P_{\mathcal K_{\operatorname{Im}}}u
:=
i\,\operatorname{Im}(\Pi_{\mathcal K}u),
\qquad
u\in L^2(\mathbb T^d;\mathbb C).
\end{equation}
Since \(V\) is real-valued, this is precisely the real orthogonal projection
onto \(\mathcal K_{\operatorname{Im}}\). Similarly, in the shifted free case,
if \(\Pi_S\) denotes the Fourier projection onto the stationary space
\(\mathcal K_c\), we define
\begin{equation}\label{eq:P-KcIm-definition}
P_{(\mathcal K_c)_{\operatorname{Im}}}\phi
:=
i\,\operatorname{Im}(\Pi_S\phi).
\end{equation}

With these notations, the duality argument reduces
\thmref{thm-controlspace} to the observability estimate
\[
\|u_0\|_{L^2}^2
\leq
C\int_0^T\int_E
\left|\operatorname{Re} e^{it(\Delta-V)}u_0\right|^2\,\d x\d t
+
\|P_{\mathcal K_{\operatorname{Im}}}u_0\|_{L^2}^2,
\]
and reduces \thmref{thm:measurable-control} to
\[
\|\phi\|_{L^2_x}^2
\leq
C\left\|\operatorname{Re}e^{it(\Delta+c)}\phi\right\|_{L^2(G)}^2
+
\|P_{(\mathcal K_c)_{\operatorname{Im}}}\phi\|_{L^2_x}^2 .
\]
Once these observability inequalities are established, the corresponding
controllable subspaces follow from real Hilbert space duality.

For cylindrical open control regions, we use a compactness--uniqueness
strategy, in the spirit of \cite{BardosLebeauRauch1992}, adapted to
real-part observations. A high--low frequency reduction is
first performed. The high-frequency part is controlled by combining complex
observability on tori
\cite{AnaJEMS2014,BZworski12,BBZ13JEMS}
with an oscillatory estimate that handles the coupling of the two spectral
components in the real part. The remaining finite-dimensional obstruction is
identified by the unique-continuation argument:
\[
\mathcal N_T
:=
\left\{
u_0\in L^2(\mathbb T^d;\mathbb C):
\operatorname{Re} e^{it(\Delta-V)}u_0=0
\text{ on }(0,T)\times E
\right\}
\]
is shown to satisfy
\[
\mathcal N_T=\mathcal K_{\operatorname{Im}}.
\]

For admissible measurable product control regions, the same duality reduction
leads to a different observability analysis. Writing
\(\mu_n=|n|^2-c\), the real part produces the double spectrum
\begin{equation}\label{1.18-7-28-w}
\Sigma_c
=
\{(-\mu_n,n),(\mu_n,n):n\in\mathbb Z^d\}
\subset\mathbb R\times\mathbb Z^d.
\end{equation}
The complex-valued observability results and auxiliary estimates of
Burq--Zhu are applied to the individual spectral branches. The main additional
step is to control the interaction between the two branches: high frequencies
are handled by an approximation argument, while the finitely many low
frequencies are absorbed through product stability and translation
differences. This yields the required double-spectrum observability estimate.

\section{Duality and stationary obstructions}
\label{sec:duality}

In this section we make the duality mechanism precise. Since the control acts
only through its real part, the problem is naturally posed in the real Hilbert
space \(L^2(\T^d;\C)\) with inner product \eqref{equ-Re-inner}. We first show
that a suitable real-part estimate for the adjoint flow implies that the
corresponding real subspace is controllable, following a standard duality
argument \cite{Lions1988,Coronbook}. We then show that purely imaginary
stationary modes give unavoidable obstructions. Combining these two facts
reduces the proofs of \thmref{thm-controlspace} and
\thmref{thm:measurable-control} to two real-part estimates, which are stated in
\subsecref{subsec:open-reduction} and \subsecref{subsec:measurable-reduction},
respectively, and proved in the following sections.

\subsection{Duality reduction to real-part observability}
\label{subsec:duality-reduction}

Fix \(T>0\). Let \(H\) be a self-adjoint Schr\"odinger operator on \(L^2(\T^d;\C)\) with
real-valued coefficients, and set
\[
	U(t):=e^{-itH},\qquad t\in\R .
\]
Let \(G\subset(0,T)\times\T^d\) be measurable. Recall that \(\mathbf 1_G\)
is its characteristic function. Denote by
\[
	\mathbf I_G:
	L^2((0,T)\times\T^d;\mathbb F)
	\longrightarrow
	L^2((0,T)\times\T^d;\mathbb F),
	\qquad \mathbb F\in\{\R,\C\},
\]
the bounded multiplication operator given by
\begin{equation}\label{eq:def-IG}
	(\mathbf I_G h)(t,x)
	=
	\mathbf 1_G(t,x)h(t,x),
	\qquad
	\text{for a.e. }(t,x)\in(0,T)\times\T^d,
\end{equation}
for every \(h\in L^2((0,T)\times\T^d;\mathbb F)\). For a.e.
\(s\in(0,T)\), we write \((\mathbf I_G h)(s)\) for the
\(L^2(\T^d;\mathbb F)\)-slice represented by
\[
	x\mapsto \mathbf 1_G(s,x)h(s,x),\;\;x\in\T^d.
\]

Since only the real part of the control enters the equation, it is
equivalent to work directly with a real-valued control
\[
g\in L^2((0,T)\times\T^d;\R).
\]
We therefore consider
\begin{equation}\label{equ-general-real-control}
	\left\{
	\begin{aligned}
		\partial_tu
		&=
		-iHu+\mathbf I_Gg,
		&& (t,x)\in(0,T)\times\T^d,\\
		u|_{t=0}&=u_0.
	\end{aligned}
	\right.
\end{equation}
 The following proposition is the
duality principle used in the proof of the main results.
\begin{proposition}[Real-part observability implies controllability of a subspace]
	\label{prop:HUM-sufficient}
	Let \(Y\) be a closed real subspace of \(L^2(\T^d;\C)\). Assume that \(Y\)
	satisfies the following two conditions.
	\begin{enumerate}
		\item \(Y\) is invariant under the Schr\"odinger group \(U(t)\), namely
		\[
			U(t)Y\subset Y,\qquad t\in\R .
		\]

		\item The real-linear operator
		\[
			\Phi_T:
			L^2((0,T)\times\T^d;\R)
			\longrightarrow
			L^2(\T^d;\C)
		\]
		given by
		\begin{equation}\label{2.2-wgs-7-6}
			\Phi_Tg
			:=
			\int_0^T U(T-s)\bigl((\mathbf I_G g)(s)\bigr)\,\d s
			\quad\text{in }L^2(\T^d;\C),
			\qquad
			g\in L^2((0,T)\times\T^d;\R),
		\end{equation}
		satisfies
		\[
			\operatorname{Ran}\Phi_T\subset Y .
		\]
	\end{enumerate}
	Assume moreover that there exists \(C>0\) such that
	\begin{equation}\label{equ-HUM-observability}
		\|\varphi\|_{L^2(\T^d)}^2
		\leq
		C\left\|
		\mathbf I_G\bigl(\operatorname{Re}(U(T-t)^*\varphi)\bigr)
		\right\|_{L^2((0,T)\times\T^d)}^2,\;\;\forall \varphi\in Y
	\end{equation}
	Then for every \(u_0\in Y\), there exists a control
	\(g\in L^2((0,T)\times\T^d;\R)\) such that the corresponding solution \(u\)
	of \eqref{equ-general-real-control} satisfies \(u(T)=0\).
\end{proposition}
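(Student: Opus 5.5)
The plan is the standard HUM/duality argument, carried out in the real Hilbert spaces \(L^2(\T^d;\C)\) (with \(\langle\cdot,\cdot\rangle_\R\)) and \(L^2((0,T)\times\T^d;\R)\). The solution of \eqref{equ-general-real-control} is \(u(T)=U(T)u_0+\Phi_Tg\). So it suffices to show that \(-U(T)u_0\in\operatorname{Ran}\Phi_T\) for every \(u_0\in Y\). By condition (1), \(-U(T)u_0\in Y\). It is therefore enough to prove
\[
Y\subset \operatorname{Ran}\Phi_T .
\]
By condition (2), this inclusion is the same as \(\operatorname{Ran}\Phi_T=Y\).

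\textbf{Step 1: the adjoint.} I compute the real adjoint \(\Phi_T^*:L^2(\T^d;\C)\to L^2((0,T)\times\T^d;\R)\). For real \(g\),
\[
\langle \Phi_Tg,\varphi\rangle_\R=\operatorname{Re}\int_0^T\!\!\int_{\T^d}\mathbf 1_G\, g\,\overline{U(T-s)^*\varphi}\,\d x\,\d s=\int_0^T\!\!\int_{\T^d} g\,\mathbf 1_G\operatorname{Re}\bigl(U(T-s)^*\varphi\bigr)\d x\,\d s .
\]
The second equality uses that \(g\) is real. Hence
\[
\Phi_T^*\varphi=\mathbf I_G\operatorname{Re}(U(T-\cdot)^*\varphi).
\]
Interchanging the time integral with the inner product is justified because the Bochner integral in \eqref{2.2-wgs-7-6} converges in \(L^2\) and \(s\mapsto U(T-s)^*\varphi\) is strongly continuous.

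\textbf{Step 2: surjectivity onto \(Y\).} Regard \(\Phi_T\) as a bounded real-linear map into the real Hilbert space \(Y\). This is allowed by (2), since \(Y\) is closed. Its adjoint as a map into \(Y\) is \(\Phi_T^*\) restricted to \(Y\). Inequality \eqref{equ-HUM-observability} says
\[
\|\varphi\|\le C^{1/2}\|\Phi_T^*\varphi\|\quad\text{for all }\varphi\in Y .
\]
By the closed range theorem (equivalently, the lemma that \(\|A^*y\|\ge c\|y\|\) implies \(A\) is onto), \(\Phi_T:L^2((0,T)\times\T^d;\R)\to Y\) is surjective.

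\textbf{Step 3: explicit construction (HUM alternative).} One can instead build the control directly. Minimize the functional
\[
J(\varphi)=\tfrac12\|\Phi_T^*\varphi\|^2+\langle U(T)u_0,\varphi\rangle_\R
\]
over \(\varphi\in Y\). This functional is continuous, strictly convex, and coercive by \eqref{equ-HUM-observability}, so it has a minimizer \(\hat\varphi\). The Euler–Lagrange equation gives
\[
\langle \Phi_T\Phi_T^*\hat\varphi+U(T)u_0,\psi\rangle_\R=0\quad\text{for all }\psi\in Y .
\]
The vector \(\Phi_T\Phi_T^*\hat\varphi+U(T)u_0\) lies in \(Y\) by (1) and (2), so it vanishes. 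Thus \(g=\Phi_T^*\hat\varphi\) steers \(u_0\) to zero.

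\textbf{Main obstacle.} I expect the only real subtlety to be the role of the invariance and range conditions. The observability estimate controls only \(\varphi\in Y\). The resulting orthogonality relation therefore holds only against \(Y\), and closing the argument requires knowing that both \(U(T)u_0\) and \(\operatorname{Ran}\Phi_T\) lie in \(Y\). Those are exactly hypotheses (1) and (2). The remaining care is to carry out every step with the real inner product, so that the real part appears correctly in the adjoint.
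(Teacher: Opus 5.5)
Your proposal is correct and follows essentially the same route as the paper. The paper computes the same real adjoint $\Phi_T^*\varphi=\mathbf I_G\operatorname{Re}(U(T-\cdot)^*\varphi)$, uses the observability inequality to make $\Lambda=\Phi_T\Phi_T^*$ coercive on the real Hilbert space $Y$, and applies Lax--Milgram to solve $\Lambda\varphi=-U(T)u_0$; this is exactly the Euler--Lagrange equation of your functional $J$ in Step~3, and your Step~2 via the closed range theorem is an equivalent abstract shortcut, with hypotheses (1) and (2) entering precisely where you indicate.
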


\begin{proof}
	Throughout the proof, all Hilbert spaces and adjoints are understood in
	the real sense. Let \(u_0\in Y\). For
	\(g\in L^2((0,T)\times\T^d;\R)\), the terminal state of the solution of
	\eqref{equ-general-real-control}  is
	\[
		u(T)=U(T)u_0+\Phi_Tg,
	\]
	where \(\Phi_Tg\) is given by \eqref{2.2-wgs-7-6}. Since \(U(t)\) is
	unitary on \(L^2(\T^d;\C)\), we can easily obtain
	\[
	\begin{aligned}
		\|\Phi_Tg\|_{L^2(\T^d)}
		\leq
		T^{1/2}\|g\|_{L^2((0,T)\times\T^d)} .
	\end{aligned}
	\]
	Together with assumption \textup{(2)}, this shows that \(\Phi_T\) is a
	bounded real-linear operator from \(L^2((0,T)\times\T^d;\R)\) into \(Y\).
	By assumption \textup{(1)}, we also have \(U(T)u_0\in Y\).

	We compute the real Hilbert adjoint of \(\Phi_T\). For
	\(g\in L^2((0,T)\times\T^d;\R)\) and \(\varphi\in Y\), Fubini's theorem
	and the adjointness of \(U(T-s)\) give
	\[
	\begin{aligned}
		\langle \Phi_Tg,\varphi\rangle_\R
		=
		\int_0^T\int_{\T^d}
		g(s,x)\mathbf 1_G(s,x)
		\operatorname{Re}(U(T-s)^*\varphi)(x)
		\,\d x\,\d s .
	\end{aligned}
	\]
	Hence, the real Hilbert adjoint
$	\Phi_T^*:
	Y\longrightarrow L^2((0,T)\times\T^d;\R)$
is given by
\begin{equation}\label{eq:real-HUM-adjoint}
	\Phi_T^*\varphi
	=
	\mathbf I_G\bigl(\operatorname{Re}(U(T-t)^*\varphi)\bigr),
	\qquad
	\varphi\in Y .
\end{equation}
	By \eqref{eq:real-HUM-adjoint} and \eqref{equ-HUM-observability},
	\(\Phi_T^*\) is bounded from below on \(Y\):
	\[
		\|\varphi\|_{L^2(\T^d)}^2
		\leq
		C\|\Phi_T^*\varphi\|_{L^2((0,T)\times\T^d;\R)}^2,
		\qquad \varphi\in Y .
	\]

	Define \(\Lambda:=\Phi_T\Phi_T^*:Y\to Y\). Then \(\Lambda\) is continuous
	and coercive on the real Hilbert space \(Y\), since
	\[
		\langle \Lambda\varphi,\varphi\rangle_\R
		=
		\|\Phi_T^*\varphi\|_{L^2((0,T)\times\T^d;\R)}^2
		\geq
		C^{-1}\|\varphi\|_{L^2(\T^d)}^2,
		\qquad \varphi\in Y .
	\]
	By the Lax--Milgram theorem, \(\Lambda\) is onto \(Y\). Therefore there
	exists \(\varphi\in Y\) such that
	\[
		\Lambda\varphi=-U(T)u_0 .
	\]
	Set \(g:=\Phi_T^*\varphi\). Then \(g\in L^2((0,T)\times\T^d;\R)\), and
	the terminal state of the corresponding solution of
	\eqref{equ-general-real-control}  is
	\[
		u(T)
		=
		U(T)u_0+\Phi_Tg
		=
		U(T)u_0+\Phi_T\Phi_T^*\varphi
		=
		U(T)u_0+\Lambda\varphi
		=
		0 .
	\]
	This proves the proposition.
\end{proof}

We next record the stationary obstruction which gives the necessary restriction
on controllable initial data.

\begin{proposition}[Stationary obstruction]
\label{prop:stationary-obstruction}
	Let \(Z\subset \ker H\) be a real subspace such that, for every
	\(g\in L^2((0,T)\times\T^d;\R)\) and every \(z\in Z\),
	\begin{equation}\label{eq:stationary-obstruction-assumption}
		\left\langle
		(\mathbf I_G g)(t),z
		\right\rangle_\R
		=
		0,
		\qquad \text{for a.e. }t\in(0,T).
	\end{equation}
	Then every initial datum which can be controlled to zero at time \(T\) for
	\eqref{equ-general-real-control} belongs to \(Z^{\perp_\R}\).
\end{proposition}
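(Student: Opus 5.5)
The plan is to pair the controlled solution with a fixed element \(z\in Z\) and show that this pairing is conserved in time. Fix \(u_0\) that can be controlled to zero at time \(T\), a control \(g\in L^2((0,T)\times\T^d;\R)\) with \(u(T)=0\), and an arbitrary \(z\in Z\). We want to show \(\langle u_0,z\rangle_\R=0\).

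We use the mild (Duhamel) form of the solution of \eqref{equ-general-real-control}, exactly as in the proof of \propref{prop:HUM-sufficient}:
\[
0=u(T)=U(T)u_0+\int_0^T U(T-s)\bigl((\mathbf I_G g)(s)\bigr)\,\d s .
\]
We take the real inner product of this identity with \(z\). Since \(z\in\ker H\), we have \(U(t)z=e^{-itH}z=z\) for all \(t\), and hence \(U(t)^*z=U(-t)z=z\). Unitarity of \(U(t)\) therefore gives
\[
\langle U(T)u_0,z\rangle_\R=\langle u_0,U(T)^*z\rangle_\R=\langle u_0,z\rangle_\R .
\]
For the integral term, the Bochner integral commutes with the bounded real-linear functional \(\langle\cdot,z\rangle_\R\), which yields
\[
\int_0^T\langle U(T-s)(\mathbf I_Gg)(s),z\rangle_\R\,\d s=\int_0^T\langle (\mathbf I_Gg)(s),z\rangle_\R\,\d s .
\]
This integral vanishes by assumption \eqref{eq:stationary-obstruction-assumption}, since the integrand is zero for a.e. \(s\in(0,T)\).

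Combining the two computations gives \(\langle u_0,z\rangle_\R=0\) for every \(z\in Z\), that is, \(u_0\in Z^{\perp_\R}\).

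The only delicate point is justifying the interchange of the inner product with the integral, together with the a.e. slice interpretation of \((\mathbf I_Gg)(s)\). This is standard: the map \(s\mapsto(\mathbf I_Gg)(s)\) lies in \(L^2(0,T;L^2(\T^d;\C))\), and the integral is a Bochner integral, so bounded functionals pass inside it. I do not expect any genuine obstacle here.
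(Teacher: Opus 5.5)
Your proposal is correct and is essentially the paper's proof: you pair the Duhamel formula with a fixed $z\in Z$, use $U(t)^*z=z$ (since $z\in\ker H$) to remove the propagators, and invoke the hypothesis to make the integral term vanish. The only difference is cosmetic: the paper first shows $\langle u(t),z\rangle_\R=\langle u_0,z\rangle_\R$ for all $t\in[0,T]$ and then sets $t=T$, whereas you work directly at $t=T$.
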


\begin{proof}
	Let \(u_0\in L^2(\T^d;\C)\) be controllable to zero at time \(T\). Then
	there exists \(g\in L^2((0,T)\times\T^d;\R)\) such that the corresponding
	solution \(u\) of \eqref{equ-general-real-control} satisfies \(u(T)=0\). Fix \(z\in Z\). Since \(z\in\ker H\),
	we have that $U(t)^*z=z$ for every $t\in\R$.
	By Duhamel's formula,
	\[
		u(t)
		=
		U(t)u_0
		+
		\int_0^t U(t-s)\bigl((\mathbf I_Gg)(s)\bigr)\,\d s
		\quad\text{in }L^2(\T^d;\C).
	\]
	Taking the real \(L^2\)-pairing with \(z\) and using
	\eqref{eq:stationary-obstruction-assumption}, we get
	\[
	\begin{aligned}
		\langle u(t),z\rangle_\R
		&=
		\langle u_0,z\rangle_\R
		+
		\int_0^t
		\left\langle
		(\mathbf I_Gg)(s),z
		\right\rangle_\R
		\,\d s  \\
		&=
		\langle u_0,z\rangle_\R,
		\qquad t\in[0,T].
	\end{aligned}
	\]
	Taking \(t=T\) and using \(u(T)=0\), we obtain
	\(\langle u_0,z\rangle_\R=0\). Since \(z\in Z\) was arbitrary,
	\(u_0\in Z^{\perp_\R}\).
\end{proof}

\subsection[Reduction of Theorem 1.1 to the open-set real-part estimate]{Reduction of \texorpdfstring{\thmref{thm-controlspace}}{Theorem 1.1} to the open-set real-part estimate}
\label{subsec:open-reduction}

We first recall that \(\mathcal K_{\operatorname{Im}}\) is the purely
imaginary stationary subspace defined in \eqref{equ-ker-ReIm}, and that
\(P_{\mathcal K_{\operatorname{Im}}}\) denotes the real orthogonal projection
onto \(\mathcal K_{\operatorname{Im}}\). Let
$G=(0,T)\times E$,
where \(E\subset\T^d\) is nonempty and open. The aim of this subsection is to
show that \thmref{thm-controlspace} follows from the following open-set
real-part estimate:
\begin{equation}\label{2.4-7-5-w}
\|\psi\|_{L^2(\T^d)}^2
\leq
C\left\|
\mathbf I_G\bigl(\operatorname{Re}(e^{it(\Delta-V)}\psi)\bigr)
\right\|_{L^2((0,T)\times\T^d)}^2
+
\|P_{\mathcal K_{\operatorname{Im}}}\psi\|_{L^2(\T^d)}^2,
\qquad
\psi\in L^2(\T^d;\C),
\end{equation}
where \(C>0\) is independent of \(\psi\). This estimate will be proved later
in \thmref{thm-obRe-V}.

We now show that \eqref{2.4-7-5-w} implies \thmref{thm-controlspace}. We take
\[
	H=-\Delta+V,
	\qquad
	U(t)=e^{-itH}=e^{it(\Delta-V)},
	\qquad t\in\R .
\]
Let \(X\) be the controllable subspace at time \(T\) for
\eqref{equ-general-real-control}. Set
\[
	Y=\mathcal K_{\operatorname{Im}}^{\perp_\R}.
\]

Since \(Y\) is a real orthogonal complement, it is a closed real
subspace of \(L^2(\T^d;\C)\). We verify the two structural assumptions in Proposition~\ref{prop:HUM-sufficient}. Since \(\mathcal K_{\operatorname{Im}}\subset\ker H\), every
\(z\in\mathcal K_{\operatorname{Im}}\) satisfies
\[
	U(t)z=z,
	\qquad
	U(t)^*z=z,
	\qquad t\in\R .
\]
Thus, if \(y\in Y\) and \(z\in\mathcal K_{\operatorname{Im}}\), then 
\[
	\langle U(t)y,z\rangle_\R
	=
	\langle y,U(t)^*z\rangle_\R
	=
	\langle y,z\rangle_\R
	=
	0,\;\;\mbox{for every}\; t\in\R.
\]
Hence \(Y\) is invariant under \(U(t)\) for every $t\in\R$.

Next, let \(g\in L^2((0,T)\times\T^d;\R)\), and let \(\Phi_Tg\) be given by
\eqref{2.2-wgs-7-6}. For \(z\in\mathcal K_{\operatorname{Im}}\), the
adjointness of \(U(T-s)\) and the identity \(U(T-s)^*z=z\) give
\[
 	\langle \Phi_Tg,z\rangle_\R
	 =
	\int_0^T
	\left\langle
	(\mathbf I_G g)(s),U(T-s)^*z
	\right\rangle_\R
	\,\d s                                                     =
	\int_0^T
	\left\langle
	(\mathbf I_G g)(s),z
	\right\rangle_\R
	\,\d s.
\]
Since \(z\) is purely imaginary and
\((\mathbf I_G g)(s)\) is real-valued for a.e. \(s\in(0,T)\), we have
\[
\left\langle (\mathbf I_G g)(s),z\right\rangle_\R=0
\quad\text{for a.e. }s\in(0,T).
\]
Therefore,
\(\Phi_Tg\in\mathcal K_{\operatorname{Im}}^{\perp_\R}=Y\).

We now prove the observability estimate \eqref{equ-HUM-observability} required in
Proposition~\ref{prop:HUM-sufficient}. Let \(\varphi\in Y\) and set
\(\psi=U(-T)\varphi\). Since \(Y\) is invariant under \(U(t)\), we have
\(\psi\in Y\), and hence \(P_{\mathcal K_{\operatorname{Im}}}\psi=0\).
Applying \eqref{2.4-7-5-w} to \(\psi\), and using the unitarity of \(U(t)\),
we obtain
\[
\begin{aligned}
\|\varphi\|_{L^2(\T^d)}^2
&=
\|\psi\|_{L^2(\T^d)}^2                                                     \\
&\leq
C\left\|
\mathbf I_G\bigl(\operatorname{Re}(U(t)U(-T)\varphi)\bigr)
\right\|_{L^2((0,T)\times\T^d)}^2                                           \\
&=
C\left\|
\mathbf I_G\bigl(\operatorname{Re}(U(T-t)^*\varphi)\bigr)
\right\|_{L^2((0,T)\times\T^d)}^2,
\end{aligned}
\]
which leads to  \eqref{equ-HUM-observability}. Proposition~\ref{prop:HUM-sufficient}
therefore yields
$	\mathcal K_{\operatorname{Im}}^{\perp_\R}=Y\subset X$.

For the reverse inclusion, we apply Proposition~\ref{prop:stationary-obstruction}
with \(Z=\mathcal K_{\operatorname{Im}}\). Let
\(g\in L^2((0,T)\times\T^d;\R)\) and
\(z\in\mathcal K_{\operatorname{Im}}\). Then \((\mathbf I_G g)(t,x)\) is real
for a.e. \((t,x)\in(0,T)\times\T^d\), while \(z(x)\) is purely imaginary for
a.e. \(x\in\T^d\). Hence
\[
	\operatorname{Re}
	\bigl((\mathbf I_G g)(t,x)\overline{z(x)}\bigr)
	=
	0,
	\qquad
	\text{for a.e. }(t,x)\in(0,T)\times\T^d .
\]
Since
\[
	(\mathbf I_G g)(t,x)\overline{z(x)}
	\in L^1((0,T)\times\T^d),
\]
Fubini's theorem gives
\[
	\operatorname{Re}\int_{\T^d}
	(\mathbf I_G g)(t,x)\overline{z(x)}\,\d x
	=
	0,
	\qquad \text{for a.e. }t\in(0,T).
\]
Thus Proposition~\ref{prop:stationary-obstruction} gives the reverse
inclusion \(X\subset\mathcal K_{\operatorname{Im}}^{\perp_\R}\).

Combining the two inclusions, we obtain
$X=\mathcal K_{\operatorname{Im}}^{\perp_\R}$,
which proves \thmref{thm-controlspace}.

The two special cases mentioned after \thmref{thm-controlspace} follow
immediately. If \(V\geq 0\) and \(\int_{\T^d}V>0\), then
\(\mathcal K=\{0\}\) by Lemma \ref{lem-positive-V}. If \(V=0\), then
\(\mathcal K\) consists of the constant functions, and
\(\mathcal K_{\operatorname{Im}}\) consists of the purely imaginary constants.
Therefore
\[
	X
	=
	\left\{
	\phi\in L^2(\T^d;\C):
	\operatorname{Im}\int_{\T^d}\phi(x)\,\d x=0
	\right\}.
\]

\subsection[Reduction of Theorem 1.2 to the real-part estimate]{Reduction of \texorpdfstring{\thmref{thm:measurable-control}}{Theorem 1.2} to the real-part estimate}
\label{subsec:measurable-reduction}

In this subsection we show that \thmref{thm:measurable-control} follows from
the real-part estimate for admissible measurable product sets. We recall that
\((\mathcal K_c)_{\operatorname{Im}}\) is the purely imaginary stationary
subspace associated with the shifted free operator, and that
\(P_{(\mathcal K_c)_{\operatorname{Im}}}\) denotes the real orthogonal
projection onto \((\mathcal K_c)_{\operatorname{Im}}\).

For \thmref{thm:measurable-control}, we work with the shifted free equation
\eqref{equ-shifted-control} on
$Q=(0,2\pi)\times\T^d$,
where \(G\subset Q\) is an admissible measurable product set. In this
subsection we use the same notation \(\mathbf I_G\), with \(T=2\pi\), for the
multiplication operator on \(L^2(Q;\mathbb F)\), \(\mathbb F\in\{\R,\C\}\),
defined in \eqref{eq:def-IG}. We take
\[
	H_c=-\Delta-c,
	\qquad
	U_c(t)=e^{-itH_c}=e^{it(\Delta+c)},\qquad t\in\R .
\]
The estimate to be proved later is
\begin{equation}\label{eq:measurable-real-part-estimate}
\|\phi\|_{L^2(\T^d)}^2
\leq
C\left\|
\mathbf I_G\bigl(\operatorname{Re}(U_c(t)\phi)\bigr)
\right\|_{L^2(Q)}^2
+
\|P_{(\mathcal K_c)_{\operatorname{Im}}}\phi\|_{L^2(\T^d)}^2,
\qquad
\phi\in L^2(\T^d;\C),
\end{equation}
where \(C>0\) is independent of \(\phi\). The stationary eigenspace is
\[
\mathcal K_c
=
\ker H_c
=
\operatorname{span}\{e^{i\langle n,x\rangle}: n\in\Z^d,\ |n|^2=c\}.
\]

Let \(X\) be the controllable subspace at time \(2\pi\) for
\eqref{equ-shifted-control}, and set
\[
	Y_c=\bigl((\mathcal K_c)_{\operatorname{Im}}\bigr)^{\perp_\R}.
\]
We verify the two structural assumptions in
Proposition~\ref{prop:HUM-sufficient} with
\[
	T=2\pi,\qquad Y=Y_c,\qquad H=H_c,\qquad U=U_c .
\]

Since \((\mathcal K_c)_{\operatorname{Im}}\subset\ker H_c\), every element of
\((\mathcal K_c)_{\operatorname{Im}}\) is fixed by both \(U_c(t)\) and
\(U_c(t)^*\). Hence, if \(y\in Y_c\) and
\(z\in(\mathcal K_c)_{\operatorname{Im}}\), then
\[
	\langle U_c(t)y,z\rangle_\R
	=
	\langle y,U_c(t)^*z\rangle_\R
	=
	\langle y,z\rangle_\R
	=
	0,
	\qquad t\in\R .
\]
Thus \(Y_c\) is invariant under \(U_c(t)\).

Next, let \(\Phi_{2\pi}^c\) be the operator given by
\eqref{2.2-wgs-7-6} with \(T=2\pi\) and \(U=U_c\). We show that
\[
	\operatorname{Ran}\Phi_{2\pi}^c\subset Y_c .
\]
Let \(g\in L^2(Q;\R)\) and \(z\in(\mathcal K_c)_{\operatorname{Im}}\). Since
\(U_c(2\pi-s)^*z=z\), we have
\[
\begin{aligned}
	\langle \Phi_{2\pi}^c g,z\rangle_\R
	&=
	\int_0^{2\pi}
	\left\langle
	U_c(2\pi-s)\bigl((\mathbf I_G g)(s)\bigr),z
	\right\rangle_\R
	\,\d s                                                     \\
	&=
	\int_0^{2\pi}
	\left\langle
	(\mathbf I_G g)(s),U_c(2\pi-s)^*z
	\right\rangle_\R
	\,\d s                                                     \\
	&=
	\int_0^{2\pi}
	\left\langle
	(\mathbf I_G g)(s),z
	\right\rangle_\R
	\,\d s
	=
	0 .
\end{aligned}
\]
The last equality follows because \(z\) is purely imaginary, whereas
\((\mathbf I_Gg)(s)\) is real-valued for a.e. \(s\in(0,2\pi)\).
Hence \(\Phi_{2\pi}^c g\in Y_c\). This verifies the second structural
assumption in Proposition~\ref{prop:HUM-sufficient}.

We now derive the observability estimate. Let \(\varphi\in Y_c\) and set
$	\psi=U_c(-2\pi)\varphi$. 
Since \(Y_c\) is invariant under \(U_c(t)\), we have \(\psi\in Y_c\). Hence, $P_{(\mathcal K_c)_{\operatorname{Im}}}\psi=0$.
Applying \eqref{eq:measurable-real-part-estimate} to this \(\psi\), and using
the unitarity of \(U_c(t)\), we get
\[
\begin{aligned}
\|\varphi\|_{L^2(\T^d)}^2
&=
\|\psi\|_{L^2(\T^d)}^2                                                    \\
&\leq
C\left\|
\mathbf I_G\bigl(\operatorname{Re}(U_c(t)\psi)\bigr)
\right\|_{L^2(Q)}^2                                                        \\
&=
C\left\|
\mathbf I_G\bigl(\operatorname{Re}(U_c(t)U_c(-2\pi)\varphi)\bigr)
\right\|_{L^2(Q)}^2                                                        \\
&=
C\left\|
\mathbf I_G\bigl(\operatorname{Re}(U_c(2\pi-t)^*\varphi)\bigr)
\right\|_{L^2(Q)}^2 .
\end{aligned}
\]
This is precisely \eqref{equ-HUM-observability} in
Proposition~\ref{prop:HUM-sufficient}, with \(T=2\pi\), \(U=U_c\), and
\(Y=Y_c\). Therefore Proposition~\ref{prop:HUM-sufficient} gives
$Y_c\subset X$. 

It remains to prove the reverse inclusion. We apply
Proposition~\ref{prop:stationary-obstruction} with
\[
	T=2\pi,
	\qquad
	H=H_c,
	\qquad
	Z=(\mathcal K_c)_{\operatorname{Im}} .
\]
Indeed, \(Z\subset\ker H_c\). Moreover, for every \(g\in L^2(Q;\R)\) and
every \(z\in(\mathcal K_c)_{\operatorname{Im}}\), we have, for a.e.
\(t\in(0,2\pi)\),
\[
	(\mathbf I_G g)(t)\in L^2(\T^d;\R),
	\qquad
	z\in L^2(\T^d;i\R).
\]
Therefore
\[
	\operatorname{Re}\int_{\T^d}
	(\mathbf I_G g)(t,x)\overline{z(x)}\,\d x
	=
	0,
	\qquad \text{for a.e. }t\in(0,2\pi).
\]
Thus the hypothesis of Proposition~\ref{prop:stationary-obstruction} is
satisfied, and therefore
$X\subset Y_c$.
Combining the two inclusions, we obtain
$	X
	=
	\bigl((\mathcal K_c)_{\operatorname{Im}}\bigr)^{\perp_\R}$,
which proves \thmref{thm:measurable-control}.

The full-space controllability criterion follows immediately. The controllable
subspace is the whole \(L^2(\T^d;\C)\) if and only if
$	(\mathcal K_c)_{\operatorname{Im}}=\{0\}$.
For the shifted free operator, this is equivalent to
\[
	c\notin\{|n|^2:n\in\Z^d\}.
\]
Indeed, if \(c=|n|^2\) for some \(n\in\Z^d\), then
\((\mathcal K_c)_{\operatorname{Im}}\neq\{0\}\): when \(n=0\), the function
\(i\) belongs to \((\mathcal K_c)_{\operatorname{Im}}\), and when \(n\neq0\),
the function
\[
	i\cos\langle n,x\rangle
\]
belongs to \((\mathcal K_c)_{\operatorname{Im}}\). Conversely, if no such
\(n\) exists, then \(\mathcal K_c=\{0\}\).

\section{Real-part estimate from open sets}
\label{sec:open-real-part}
Throughout this section, we fix $T>0$, assume that
$V\in C(\mathbb T^d;\mathbb R)$, and let $E\subset\mathbb T^d$ be
nonempty and open. We consider the homogeneous Schr\"odinger equation
\begin{equation}\label{equ-sch-V}
	\left\{
	\begin{aligned}
		i\partial_t u&=(-\Delta+V)u,
		&& (t,x)\in (0,T)\times\mathbb T^d,\\
		u|_{t=0}&=u_0,
	\end{aligned}
	\right.
\end{equation}
where \(u_0\in L^2(\mathbb T^d;\mathbb C)\).
 We use the notation
\(\mathcal K\), \(\mathcal K_{\operatorname{Im}}\),
\(\mathcal K_{\operatorname{Im}}^{\perp_{\mathbb R}}\), and
\(P_{\mathcal K_{\operatorname{Im}}}\) introduced in
\eqref{1.3-7-14-w}, \eqref{equ-ker-ReIm}, \eqref{1.5-7-14-w},
and \eqref{eq:P-Kim-definition}, respectively.

The aim of this section is to prove the following open-set real-part estimate:

\begin{theorem}[Real-part estimate from open sets]\label{thm-obRe-V}
	There exists a constant \(C=C(V,d,T,E)>0\) such that, for every
	\(u_0\in L^2(\mathbb T^d;\mathbb C)\), the corresponding solution of
	\eqref{equ-sch-V} satisfies
	\begin{equation}\label{equ-thm1-1}
		\|u_0\|^2_{L^2(\mathbb T^d)}
		\le
		C\int_0^T\int_E |\operatorname{Re}u(t,x)|^2\,\d x\,\d t
		+
		\|P_{\mathcal K_{\operatorname{Im}}}u_0\|^2_{L^2(\mathbb T^d)} .
	\end{equation}
	Moreover, the correction term is optimal in the sense that it cannot be
replaced by the projection onto any proper real subspace of
\(\mathcal K_{\operatorname{Im}}\).
\end{theorem}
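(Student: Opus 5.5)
We follow the compactness--uniqueness route announced in \subsecref{subsec:strategy}. Let \(H=-\Delta+V\), with real eigenbasis \(\{e_k\}\) (real-valued, since \(V\) is real) and eigenvalues \(\lambda_k\to\infty\), and write \(u(t)=e^{-itH}u_0\). The splitting
\[
\operatorname{Re}u=\tfrac12\bigl(e^{-itH}u_0+e^{itH}\overline{u_0}\bigr)
\]
is the basic identity. Here \(\overline{u_0}=\sum_k \overline{a_k}e_k\) when \(u_0=\sum_k a_ke_k\). Hence \(\operatorname{Re}u=\sum_k(\operatorname{Re}(a_ke^{-it\lambda_k}))e_k\), whose time frequencies are \(\pm\lambda_k\).

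\textbf{Step 1 (high frequencies).} Let \(\Pi_{>N}\) be the spectral projection onto \(\{\lambda_k>N\}\). We use complex observability on tori with continuous potential from an arbitrary open set and arbitrary time, as in \cite{AnaJEMS2014,BZworski12,BBZ13JEMS}:
\[
\|w_0\|^2\le C\int_0^T\!\!\int_E|e^{-itH}w_0|^2\,\d x\,\d t .
\]
We apply it to \(w_0=\Pi_{>N}u_0\) and expand \(|\operatorname{Re}u|^2=\tfrac12|u|^2+\tfrac12\operatorname{Re}(u^2)\). The cross term \(\int\!\int_E \operatorname{Re}(u^2)\) pairs the \(e^{-it\lambda_k}\) and \(e^{-it\lambda_j}\) components with total time frequency \(\lambda_k+\lambda_j\ge 2N\).

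We insert a smooth cutoff \(\chi(t)\) supported in \((0,T)\); the complex observability still holds with \(\chi\), after shrinking the time interval. Integrating by parts in time then gives
\[
\Bigl|\int\chi(t)\int_E u^2\,\d x\,\d t\Bigr|\le C_\chi\,\|\langle H\rangle^{-1/2}u_0\|^2+o_N(1)\|u_0\|^2 ,
\]
using Schur-type bounds on \(|\hat\chi(\lambda_k+\lambda_j)|\,|\langle 1_Ee_k,\overline{e_j}\rangle|\). This is the \emph{oscillatory estimate}. It yields
\[
\|u_0\|^2\le C\!\int_0^T\!\!\int_E|\operatorname{Re}u|^2+C\|\Pi_{\le N}u_0\|^2+\varepsilon_N\|u_0\|^2 ,
\]
with \(\varepsilon_N\to0\). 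Absorbing \(\varepsilon_N\|u_0\|^2\) for \(N\) large gives the weak estimate
\[
\|u_0\|^2\le C\!\int_0^T\!\!\int_E|\operatorname{Re}u|^2+C\|u_0\|_{H^{-1}}^2 .
\]
\textbf{This is the main obstacle.} Making the decay of the coupling between the two branches precise (\(\lambda_k+\lambda_j\) large, while the spatial Gram matrix of \(1_E\) is only bounded) needs care. I would first try the integration by parts in \(t\), which produces \(\langle H\rangle^{-1}\) factors on one side.

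\textbf{Step 2 (compactness).} Argue by contradiction in the standard Bardos--Lebeau--Rauch way \cite{BardosLebeauRauch1992}. Suppose \(u_0^n\perp_\R\mathcal K_{\operatorname{Im}}\), \(\|u_0^n\|=1\), and \(\int\!\int_E|\operatorname{Re}u^n|^2\to0\). Pass to a weak limit \(u_0\); compactness of \(H^{-1}\hookrightarrow\) gives strong convergence in \(H^{-1}\), so the weak estimate forces \(u_0\neq0\). Moreover \(\operatorname{Re}u=0\) on \((0,T)\times E\) and \(u_0\perp_\R\mathcal K_{\operatorname{Im}}\). It therefore suffices to show \(\mathcal N_T=\mathcal K_{\operatorname{Im}}\), which gives \(u_0=0\), a contradiction. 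This proves the estimate on \(\mathcal K_{\operatorname{Im}}^{\perp_\R}\). For general \(u_0\), write \(u_0=v+P_{\mathcal K_{\operatorname{Im}}}u_0\) and note that \(\operatorname{Re}e^{-itH}z=0\) for \(z\in\mathcal K_{\operatorname{Im}}\); this gives \eqref{equ-thm1-1}.

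\textbf{Step 3 (uniqueness, \(\mathcal N_T=\mathcal K_{\operatorname{Im}}\)).} The inclusion \(\supset\) is immediate. For \(\subset\), the weak estimate restricted to \(\mathcal N_T\) shows \(\mathcal N_T\) is finite dimensional, as the unit ball is compact. Being real-linear, closed, and invariant under \(e^{-isH}\) after shrinking \(T\) slightly (the usual \(\mathcal N_{T-\delta}\) stabilization), it is invariant under the generator. So \(\mathcal N_T\) is a finite-dimensional real subspace of \(D(H)\) on which \(J:=-iH\) acts.

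The functions \(\operatorname{Re}(e^{-it\lambda_k}a_k)\) for distinct \(\lambda_k\) are linearly independent in \(t\) apart from the pairing \(\pm\lambda\). Hence \(\operatorname{Re}u=0\) on \((0,T)\times E\) reads
\[
\sum_{\lambda_k=\lambda}\bigl(\operatorname{Re}a_k\cos\lambda t+\operatorname{Im}a_k\sin\lambda t\bigr)e_k=0\quad\text{on }E
\]
for each eigenvalue \(\lambda\). For \(\lambda\ne0\), this gives \(\sum\operatorname{Re}a_ke_k=\sum\operatorname{Im}a_ke_k=0\) on \(E\). Each sum is an eigenfunction of \(H\) vanishing on an open set, so by unique continuation for \(-\Delta+V\) with \(V\) continuous, all \(a_k\) vanish for that \(\lambda\). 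For \(\lambda=0\), only \(\operatorname{Re}\sum a_ke_k=0\) on \(E\) survives, so the real part of the stationary component vanishes. Therefore \(u_0\in\mathcal K_{\operatorname{Im}}\).

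\textbf{Optimality.} If \(W\subsetneq\mathcal K_{\operatorname{Im}}\), pick \(0\ne z\in\mathcal K_{\operatorname{Im}}\) with \(z\perp_\R W\). Then \(\operatorname{Re}e^{-itH}z=0\) and \(P_Wz=0\), so the right-hand side vanishes while \(\|z\|>0\).
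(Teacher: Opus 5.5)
Your architecture matches the paper's. High frequencies are handled by complex observability plus a bound on the $\lambda_k+\lambda_j$ cross terms; a compactness argument reduces everything to $\mathcal N_T=\mathcal K_{\operatorname{Im}}$; and $\mathcal N_T$ is identified through finite dimensionality, invariance under $J=-iH$, and unique continuation. Two of your variations are sound:
\begin{itemize}
\item getting finite dimensionality directly from $\|u_0\|\le C\|u_0\|_{H^{-1}}$ on $\mathcal N_T$ is more direct than the paper's graph-norm route;
\item the $\mathcal N_{T-\delta}$ stabilization is a legitimate substitute for the paper's difference-quotient argument.
\end{itemize}

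\textbf{The gap is in the identification step.} You note that time frequencies pair as $\pm\lambda$. You then nonetheless assert, for each $\lambda\neq0$, that $\sum_{\lambda_k=\lambda}\operatorname{Re}a_ke_k$ and $\sum_{\lambda_k=\lambda}\operatorname{Im}a_ke_k$ vanish on $E$.

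Since $V$ has no sign, $\lambda$ and $-\lambda$ can both be eigenvalues. For example, $V\equiv-\tfrac12$ gives $-\tfrac12$ from $n=0$ and $\tfrac12$ from $|n|=1$. Set $P_\mu=\sum_{\lambda_k=\mu}\operatorname{Re}a_ke_k$ and $Q_\mu=\sum_{\lambda_k=\mu}\operatorname{Im}a_ke_k$. Then $\operatorname{Re}u=0$ on $(0,T)\times E$ only yields $P_\lambda+P_{-\lambda}=0$ and $Q_\lambda-Q_{-\lambda}=0$ on $E$. These are not eigenfunctions, so unique continuation cannot be applied. $J$-invariance does not rescue this either: applying the same identity to $Ju_0$ reproduces exactly these two relations.

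The missing idea is the locality of $H$. Since $P_\lambda+P_{-\lambda}$ vanishes on the open set $E$, so does $H(P_\lambda+P_{-\lambda})=\lambda(P_\lambda-P_{-\lambda})$. Hence $P_{\pm\lambda}=0$ on $E$ separately, and likewise for $Q$; only then does unique continuation apply to each eigenspace. This is exactly what the paper does inside a rotation block: $\operatorname{Re}p|_E=0$ forces $\operatorname{Re}(Hp)|_E=0$, while $Hp=i\lambda q$ is real on $E$, so $p|_E=q|_E=0$. Then $p\pm iq$ are eigenfunctions for $\pm\lambda$.

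\textbf{Two further points must be made explicit.}
\begin{itemize}
\item Separating time frequencies on the bounded interval $(0,T)$ is valid only for finitely many distinct $\lambda_k$. For infinite sums, distinct frequencies need not give independent exponentials; for instance $\{e^{ikt}\}_{k\in\mathbb Z}$ is dependent on $(0,T)$ when $T<2\pi$. You have the needed ingredient but must invoke it. If $p\in\R[x]$ is the minimal polynomial of $J$ on the finite-dimensional invariant space, then $p(-i\lambda_k)a_k=0$ for all $k$, so every element has finite spectral support.
\item The obstacle you flag in Step 1 is not real, and no integration by parts is needed. Using only $|\int_Ee_ke_j|\le1$, the Schur test bounds the cross term by $\sup_{\lambda_k>N}\sum_{\lambda_j>N}|\hat\chi(\lambda_k+\lambda_j)|\,\|\Pi_{>N}u_0\|^2$. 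Rapid decay of $\hat\chi$ together with Weyl's law $\#\{\lambda_j\le L\}\lesssim L^{d/2}$ makes this $O(N^{d/2-M})$ for any $M>d/2$. That is how the paper closes the high-frequency estimate.
\end{itemize}
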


\begin{corollary}\label{cor-positive-potential}
	Under the assumptions above, suppose that \(V\ge0\) and
	\[
	\int_{\mathbb T^d}V(x)\,\d x>0.
	\]
	Then there exists \(C=C(V,d,T,E)>0\) such that, for every solution \(u\)
	of \eqref{equ-sch-V},
	\begin{align}\label{equ-Re-ob-positive-potential}	
	\|u_0\|^2_{L^2(\mathbb T^d)}
	\le
	C\int_0^T\int_E |\operatorname{Re}u(t,x)|^2\,\d x\,\d t .
	\end{align}
\end{corollary}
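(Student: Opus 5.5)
The plan is to deduce this directly from \thmref{thm-obRe-V}. It is enough to show that the stationary kernel \(\mathcal K=\ker(-\Delta+V)\) is trivial; this is the content of Lemma~\ref{lem-positive-V}, which is referred to in the introduction. Then \(\mathcal K_{\operatorname{Im}}=\{0\}\), so \(P_{\mathcal K_{\operatorname{Im}}}u_0=0\) for every \(u_0\in L^2(\T^d;\C)\). The correction term in \eqref{equ-thm1-1} therefore drops out, and \eqref{equ-Re-ob-positive-potential} follows with the same constant \(C=C(V,d,T,E)\).

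To check \(\mathcal K=\{0\}\) by the energy identity, I would proceed in three steps.

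\emph{Regularity.} Let \(\phi\in L^2(\T^d;\C)\) satisfy \((-\Delta+V)\phi=0\) in the distributional sense. Since \(V\) is continuous, \(V\phi\in L^2\), so \(\Delta\phi=V\phi\in L^2\). Fourier series on the torus then give \(\phi\in H^2(\T^d)\), which justifies the integration by parts below.

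\emph{Energy identity.} Pairing the equation with \(\overline{\phi}\) and integrating over \(\T^d\) gives
\[
\int_{\T^d}|\nabla\phi|^2\,\d x+\int_{\T^d}V|\phi|^2\,\d x=0 .
\]
Both terms are nonnegative because \(V\ge0\), so each vanishes. In particular \(\nabla\phi=0\), and \(\phi\) equals a constant \(a\), since \(\T^d\) is connected.

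\emph{Conclusion.} The second term then reads \(|a|^2\int_{\T^d}V\,\d x=0\). Since \(\int_{\T^d}V>0\), this forces \(a=0\), so \(\phi=0\).

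I do not expect any real obstacle. The only point needing care is the regularity step that justifies the integration by parts for merely continuous \(V\), and it is handled as above.
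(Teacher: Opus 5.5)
Your proposal is correct and follows the paper's proof. Both reduce the corollary to Theorem~\ref{thm-obRe-V} by showing $\mathcal K=\{0\}$, so that the correction term $\|P_{\mathcal K_{\operatorname{Im}}}u_0\|^2$ vanishes and the same constant $C=C(V,d,T,E)$ works.

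The only difference is how $\mathcal K=\{0\}$ is obtained. You use the direct qualitative energy argument, including the regularity step. The paper instead cites Lemma~\ref{lem-positive-V}, which proves the stronger quantitative coercivity bound $\int|\nabla u|^2+\int V|u|^2\ge\kappa_V\|u\|^2$ via a Tur\'an inequality; kernel triviality follows from that bound. Your elementary argument is fully sufficient for this corollary.
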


\begin{proof}
	By Lemma~\ref{lem-positive-V} in the Appendix, the assumptions
	\(V\ge0\) and
	\[
		\int_{\mathbb T^d}V(x)\,\d x>0
	\]
	imply that \(\mathcal K=\{0\}\). Hence
	\(\mathcal K_{\operatorname{Im}}=\{0\}\). Therefore the correction term
	in \eqref{equ-thm1-1} vanishes, and
	\eqref{equ-Re-ob-positive-potential} follows directly from
	Theorem~\ref{thm-obRe-V}.
\end{proof}

\begin{remark}
Writing
\[
u=y+iz,
\qquad
y,z:\,(0,T)\times\mathbb T^d\longrightarrow\mathbb R,
\]
equation \eqref{equ-sch-V} is equivalent to the coupled
real Hamiltonian system
\begin{equation}\label{equ-coupled-real-system}
	\left\{
	\begin{aligned}
		\partial_t y &=(-\Delta+V)z,\\
		\partial_t z &=-(-\Delta+V)y,\\
		(y,z)|_{t=0}&=(y_0,z_0).
	\end{aligned}
	\right.
\end{equation}
Under the assumptions of \corref{cor-positive-potential}, the preceding
estimate \eqref{equ-Re-ob-positive-potential} is equivalent to
\[
\|y_0\|_{L^2(\mathbb T^d)}^2
+
\|z_0\|_{L^2(\mathbb T^d)}^2
\le
C\int_0^T\int_E |y(t,x)|^2\,\d x\,\d t
\]
for every solution $(y,z)$ of \eqref{equ-coupled-real-system}. Thus, in this
Hamiltonian system, observing only one component $y$ on $(0,T)\times E$
stably determines both initial components. This should be compared
with one-component or partial observability results for coupled parabolic
systems, including the Kalman-type theory and related parabolic systems with
one control force
\cite{AmmarKhodjaBenabdallahGonzalezTeresa2011Survey,
	BenabdallahCannarsaYamamoto2009,Guerrero2007,FernandezCaraGonzalezTeresa2010,
	LissyZuazua2019}, and with the recent measurable-set result for strongly
coupled parabolic systems of Fu--Wang--Yu--Zhu \cite{FuWangYu26}. We also
mention the coupled dispersive Schr\"odinger--KdV system of
Araruna--Cerpa--Mercado--Santos \cite{ArarunaCerpaMercadoSantos2016}, where
Carleman estimates are used to prove internal null controllability for a linear
Schr\"odinger--KdV system on a bounded interval.
	
\end{remark}

We now explain the organization of the proof of Theorem~\ref{thm-obRe-V}.
The proof has three steps.  In \subsecref{subsec:open-compact-remainder}, we prove
a real-part observability estimate up to a compact remainder.  The key input is
a high-frequency real-part estimate; the low-frequency part is kept as a
compact term.  In \subsecref{subsec:ucp-observable-subspace}, we identify the
invisible space and use this identification to obtain an observability estimate
on \(\mathcal K_{\operatorname{Im}}^{\perp_{\mathbb R}}\).  Finally, in
\subsecref{subsec:proof-open-real-part-estimate}, we combine this estimate with
the real orthogonal decomposition
\[
L^2(\mathbb T^d;\mathbb C)
=
\mathcal K_{\operatorname{Im}}^{\perp_{\mathbb R}}
\oplus
\mathcal K_{\operatorname{Im}}
\]
to prove Theorem~\ref{thm-obRe-V}.

\subsection{Real-part observability up to a compact remainder}
\label{subsec:open-compact-remainder}

The main result of this subsection is an observability estimate with a compact
remainder. Its proof is based on a high-frequency real-part estimate, which is
obtained by combining the classical complex-valued observability inequality
with an oscillatory cancellation argument for the interaction between the
forward and backward Schr\"odinger branches.

We first introduce the spectral notation for \(H=-\Delta+V\) and the
associated Sobolev scale. Since \(V\) is real-valued and continuous, the
operator \(H\) is self-adjoint on \(L^2(\mathbb T^d;\mathbb C)\), with domain
\(H^2(\mathbb T^d)\). Its spectrum is discrete. Let
\[
	\lambda_1\le\lambda_2\le\cdots\le\lambda_n\le\cdots\to+\infty
\]
be the eigenvalues of \(H\), counted with multiplicity, and let
\(\{\varphi_n\}_{n\ge1}\) be an orthonormal basis of
\(L^2(\mathbb T^d;\mathbb C)\) consisting of eigenfunctions of \(H\),
\[
	H\varphi_n=\lambda_n\varphi_n .
\]
 For
every real \(s\), we denote by \(\mathcal H^s\) the Hilbert space of functions
with finite norm
\[
	\|u\|^2_{\mathcal H^s}
	=
	\sum_n(1+|\lambda_n|^2)^s |(u,\varphi_n)|^2.
\]

\begin{proposition}[Observability up to a compact term]\label{prop-compact-remainder}
	There exists a constant \(C=C(V,d,T,E)>0\) such that, for every
	\(u_0\in L^2(\mathbb T^d;\mathbb C)\),
	\begin{equation}\label{equ-314-1}
		\|u_0\|^2_{L^2(\mathbb T^d)}
		\le
		C\int_0^T
		\|\operatorname{Re}e^{it(\Delta-V)}u_0\|^2_{L^2(E)}\,\d t
		+
		C\|u_0\|^2_{\mathcal H^{-2}}.
	\end{equation}
\end{proposition}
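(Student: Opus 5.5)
Here is how I would prove Proposition~\ref{prop-compact-remainder}. I would split \(u_0\) into a low-frequency part \(u_0^{\le N}=\sum_{\lambda_n\le N}(u_0,\varphi_n)\varphi_n\) and a high-frequency part \(u_0^{>N}\), with \(N\) large to be fixed. The low part satisfies \(\|u_0^{\le N}\|_{L^2}^2\le (1+N^2)^2\|u_0\|^2_{\mathcal H^{-2}}\), so it goes directly into the compact remainder. The real-part observation of \(u_0^{\le N}\) is also bounded by \(C_T\|u_0^{\le N}\|_{L^2}^2\), hence by \(C_N\|u_0\|^2_{\mathcal H^{-2}}\). By the triangle inequality it therefore suffices to prove a high-frequency estimate
\[
\|w\|_{L^2}^2\le C\int_0^T\|\operatorname{Re}e^{it(\Delta-V)}w\|_{L^2(E)}^2\,\d t+C\|w\|_{\mathcal H^{-2}}^2,
\qquad w=u_0^{>N}.
\]

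For the high-frequency estimate, write \(u(t)=e^{-itH}w\) and
\[
2\operatorname{Re}u=e^{-itH}w+e^{itH}\overline w,
\]
where \(e^{itH}\overline w=\overline{e^{-itH}w}\) because \(V\) is real. Then
\[
4\int_0^T\!\!\int_E|\operatorname{Re}u|^2
=2\int_0^T\!\!\int_E|u|^2+2\operatorname{Re}\int_0^T\!\!\int_E u(t,x)^2\,\d x\,\d t .
\]
The first term is handled by the classical complex observability on tori from open sets for continuous (even \(L^\infty\)) potentials, as in \cite{AnaJEMS2014,BZworski12,BBZ13JEMS}. This gives \(\|w\|^2\le C\int_0^T\int_E|u|^2\), and so
\[
\|w\|^2\le 2C\int_0^T\!\!\int_E|\operatorname{Re}u|^2+C\Bigl|\int_0^T\!\!\int_E u^2\Bigr| .
\]

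It remains to show that the cross term satisfies
\[
\Bigl|\int_0^T\!\!\int_E u^2\Bigr|\le \varepsilon(N)\|w\|^2+C_N\|w\|^2_{\mathcal H^{-2}}
\]
with \(\varepsilon(N)\to0\). Then, for \(N\) large, the \(\varepsilon\)-term is absorbed and the estimate follows. To do this, replace \(\mathbf 1_E\) by a smooth cutoff \(\chi\) and work on \(\chi\mathbf 1_E\) via an inner/outer approximation, or smooth the time cutoff as well with \(\theta\in C_c^\infty(0,T)\). Expanding in eigenfunctions,
\[
\int\theta(t)\!\int\chi\, u^2
=\sum_{m,n}a_ma_n\,\hat\theta(\lambda_m+\lambda_n)\,(\chi\varphi_m,\overline{\varphi_n}).
\]
Since \(\lambda_m,\lambda_n> N\) and \(H\) is bounded below, the frequency \(\lambda_m+\lambda_n\ge 2N\) is large. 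Hence \(|\hat\theta(\lambda_m+\lambda_n)|\le C_k(\lambda_m+\lambda_n)^{-k}\), and a Schur-test bound gives decay \(O(N^{-1})\). More cleanly, integrating by parts in time once using \(\partial_t(u^2)=-2iu\,Hu\) avoids eigenfunction sums. One writes \(u^2\) through \((H\otimes1+1\otimes H)^{-1}\) acting on the product and uses that this resolvent is \(O(N^{-1})\) on the high-frequency range.

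The main obstacle is that the sharp cutoff \(\mathbf 1_{(0,T)\times E}\) is not smooth, while complex observability requires keeping a genuine open set. I would handle this as follows. First apply complex observability on a slightly smaller set \((\delta,T-\delta)\times E'\) with \(\overline{E'}\subset E\). Then bound \(\int_\delta^{T-\delta}\!\int_{E'}|\operatorname{Re}u|^2\)-type quantities from above and below by smooth weights \(\theta(t)\chi(x)\) with \(\mathbf 1_{E'}\le\chi\le\mathbf 1_E\). Apply the identity above with the weight \(\theta\chi\) in place of \(\mathbf 1_G\), so that only the smooth-weight cross term needs the oscillation estimate. Finally, the bound for the cross term with a smooth weight, uniformly in \(w\) of frequency \(>N\), is the key step. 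I would first try the time integration-by-parts: it produces \(\theta'\) and boundary-free terms of size \(N^{-1}\|w\|^2\), and the commutator \([H,\chi]\) is handled by the \(\mathcal H^{-1}\)-type gains.
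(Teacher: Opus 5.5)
Your proposal is correct, and its architecture coincides with the paper's. You split at \(\lambda_n=N\) and send the low modes, together with their observation, into the \(\mathcal H^{-2}\) term. You write \(|\operatorname{Re}u|^2=\frac12|u|^2+\frac12\operatorname{Re}(u^2)\) and use Anantharaman--Maci\`a observability for \(|u|^2\) on a smaller time window. Finally you show that the cross term is \(o(1)\|w\|^2\) on high frequencies after inserting a smooth cutoff.

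The difference is in how the cross term is bounded. The paper smooths only in time (\(\chi_T=1\) on \([T/3,T/2]\)) and keeps the sharp \(\mathbf 1_E\). It then argues exactly as in your first option: eigenfunction expansion, the crude bound \(|\int_E\varphi_m\varphi_n|\le1\), rapid decay of \(\widehat{\chi_T}\), and a Schur test. That Schur test needs an eigenvalue counting bound, which the paper supplies through the dyadic Weyl estimate to get \(K_N\lesssim N^{d/2-k}\); your sentence leaves this implicit.

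Your preferred route, one time integration by parts plus a commutator, is genuinely different and does work. Set \(v=H^{-1}u\) on \(\operatorname{Ran}(I-\Pi_N)\) and use the symmetry \(\int(Hf)g=\int f(Hg)\), which comes from the real coefficients. One finds
\[
\int\theta\int\chi\,u^2\,\mathrm{d}x\,\mathrm{d}t=-\frac i2\int\theta'\int\chi\,v\,u\,\mathrm{d}x\,\mathrm{d}t-\frac12\int\theta\int\bigl([H,\chi]v\bigr)\,u\,\mathrm{d}x\,\mathrm{d}t .
\]
Here the \(\theta'\) term is \(O(N^{-1})\|w\|^2\). The commutator term is \(O(N^{-1/2})\|w\|^2\), since \(\|[H,\chi]v\|\lesssim\|\nabla v\|+\|v\|\lesssim N^{-1/2}\|w\|\). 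This route avoids eigenfunction sums and Weyl's law. However, it requires a \(C^2\) spatial weight, which is why you need the \(E'\Subset E\) sandwich; the paper's cross-term bound works for any measurable \(E\).

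Your tensor-product remark is incomplete as stated. An operator-norm bound on \((H\otimes1+1\otimes H)^{-1}\) in \(L^2(\mathbb T^{2d})\) does not control restriction to the diagonal. Keeping the product structure via \(\frac1{\lambda_m+\lambda_n}=\int_0^\infty e^{-s(\lambda_m+\lambda_n)}\,\mathrm{d}s\) does work. For \(f=\sum a_m\varphi_m\) with \(\lambda_m>N\) it gives \(\bigl|\sum_{m,n}\frac{a_ma_n}{\lambda_m+\lambda_n}\int_E\varphi_m\varphi_n\bigr|\le\int_0^\infty\|e^{-sH}f\|^2\,\mathrm{d}s\le\|f\|^2/(2N)\). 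Combined with \(\int_0^Te^{-i\xi t}\,\mathrm{d}t=(1-e^{-i\xi T})/(i\xi)\), this even yields \(|\int_0^T\int_E u^2|\le\|w\|^2/N\) with both cutoffs sharp. So no smoothing is needed at all.
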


The rest of this subsection is devoted to the proof of
\propref{prop-compact-remainder}. We first recall the classical observability
inequality for the Schr\"odinger equation on the torus, due to Anantharaman
and Macià; see \cite{AnaJEMS2014}.

\begin{proposition}[Classical observability]\label{prop-ob-class}
For every \(\tau>0\), there exists a constant
	\(C_0=C_0(V,d,\tau,E)>0\) such that
	\[
	C_0\|u_0\|^2_{L^2(\mathbb T^d)}
	\le
	\int_0^\tau
	\|e^{it(\Delta-V)}u_0\|_{L^2(E)}^2\,\d t,
	\qquad
	u_0\in L^2(\mathbb T^d;\mathbb C).
	\]
\end{proposition}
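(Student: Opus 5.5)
We plan to argue by contradiction combined with compactness--uniqueness, and to treat high and low frequencies separately. Suppose the inequality fails. Then there are data \(u_0^k\) with \(\|u_0^k\|_{L^2}=1\) and
\[
\int_0^\tau\|e^{it(\Delta-V)}u_0^k\|_{L^2(E)}^2\,\d t\to0 .
\]
After extracting a subsequence, we may assume \(u_0^k\rightharpoonup u_0\) weakly in \(L^2\).

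We then split the argument into two steps. In the first step we show that the sequence cannot concentrate at high frequency, which yields strong convergence \(u_0^k\to u_0\). In the second step we show that \(u_0=0\). Together these contradict \(\|u_0^k\|_{L^2}=1\).

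\emph{High frequency.} We associate to \(u^k(t)=e^{it(\Delta-V)}u_0^k\) its time-dependent Wigner distributions and extract a limiting semiclassical defect measure \(\mu(t,\d x,\d\xi)\) on \(\T^d\times\R^d\), possibly including mass at infinity. The main point is the structure theorem of Anantharaman--Macià \cite{AnaJEMS2014}, which we intend to follow.

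\begin{enumerate}
\item Decompose \(\mu\) according to the resonant lattice subspaces \(\Lambda\subset\Z^d\) on which \(\xi\) concentrates.
\item On non-resonant directions, averaging along the geodesic flow makes the \(x\)-marginal invariant under translations. Such a marginal is a multiple of Lebesgue measure, so it charges \(E\).
\item On each resonant component, perform a second microlocalization transversal to \(\Lambda\). The resulting two-microlocal measure is propagated by a Schr\"odinger flow of lower dimension, namely on \(\T^d/\Lambda^\perp\)-type tori. In this flow the potential enters only through its average \(\langle V\rangle_\Lambda\) along the resonant torus. Continuity, indeed mere boundedness, of \(V\) suffices here, because the potential term is of lower order in the semiclassical scaling.
\item Apply an induction on the dimension, whose base case is the one-dimensional case handled by Ingham-type estimates. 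The induction shows that each such component is absolutely continuous in \(x\) and is nonzero on every open set whenever it is nonzero.
\end{enumerate}

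Since the observation tends to zero, we have \(\int_0^\tau\mu(t)(E\times\R^d)\,\d t=0\). By the steps above this forces \(\mu\equiv0\), so \(u_0^k\to u_0\) strongly in \(L^2\). As a result we obtain the estimate with a compact remainder \(C\|u_0\|_{\mathcal H^{-2}}^2\) added on the right.

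\emph{Low frequency and uniqueness.} The limit \(u_0\) satisfies \(e^{it(\Delta-V)}u_0=0\) on \((0,\tau)\times E\). Let \(\mathcal N\) be the space of all such data. By the compact-remainder estimate, the unit ball of \(\mathcal N\) is compact, so \(\mathcal N\) is finite dimensional.

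Next we note that \(\mathcal N\) is invariant under the flow for small times. To see this, compare the observation windows \((0,\tau)\) and \((0,\tau-\varepsilon)\) and use that the constants are monotone in \(\tau\). Then difference quotients show that \(\mathcal N\) is invariant under \(H=-\Delta+V\). Being finite dimensional, \(\mathcal N\) would therefore contain an eigenfunction \(\varphi\) of \(H\) vanishing on \(E\), unless \(\mathcal N=\{0\}\). But elliptic unique continuation for \(-\Delta+V-\lambda\) with bounded \(V\) gives \(\varphi\equiv0\). Hence \(\mathcal N=\{0\}\) and \(u_0=0\), which is the desired contradiction.

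The main obstacle is the second-microlocal analysis on resonant tori, in particular establishing absolute continuity and full support of the resonant components with a merely continuous potential. This step relies essentially on the Anantharaman--Macià machinery \cite{AnaJEMS2014}. The remaining steps are standard.
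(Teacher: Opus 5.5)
Your outline follows essentially the same route as the paper. The paper does not prove this proposition; it quotes it from Anantharaman--Macià \cite{AnaJEMS2014}, and their argument is the compactness--uniqueness scheme you describe, with resonant decomposition, second microlocalization and induction on dimension. Your treatment of the invisible space $\mathcal N$ is the complex-linear counterpart of the paper's Lemma~\ref{lem-ucp}.

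One claim in your step~3 is wrong, however. The potential is of lower order only at the first microlocal level. There $h\,[V,\mathrm{Op}_h(a)]=O(h)$ in operator norm for bounded $V$, and this gives invariance of $\mu$ under the geodesic flow. At the second-microlocal level, the reduced flow on the resonant torus is generated by $-\Delta_\Lambda+\langle V\rangle_\Lambda$, so the averaged potential acts at leading order, as your own preceding sentence says. Continuity of $V$ is what is used to pass to the limit in that term, for instance by approximating $V$ uniformly by trigonometric polynomials whose non-resonant modes average out. The argument does not give the result for merely bounded $V$ in general dimension; rough potentials required separate work, e.g.\ Bourgain--Burq--Zworski \cite{BBZ13JEMS} on $\T^2$. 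Since $V\in C(\T^d;\R)$ here, this does not invalidate the outline, but the justification should be replaced.

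Finally, build the measure from $u_0^k-u_0$ rather than from $u_0^k$. By weak lower semicontinuity, $e^{it(\Delta-V)}u_0=0$ on $(0,\tau)\times E$, so the difference still has vanishing observation. Because the difference is weakly null, you can choose the scale $h_k$ so that $\mu$ carries no mass near $\xi=0$. Otherwise the $\xi=0$ component is the full $d$-dimensional problem again, and the induction becomes circular.
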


We now introduce the remaining spectral notation needed in the proof. For
\(N\in\mathbb R\), let \(\Pi_N\) be the orthogonal projection onto the
finite-dimensional subspace spanned by those eigenfunctions \(\varphi_n\) with
\(\lambda_n\le N\). Thus \(\Pi_Nu\) is the low-frequency part of \(u\), while
\((I-\Pi_N)u\) is the corresponding high-frequency part. We do not assume any
sign condition on \(V\), and therefore some low eigenvalues may be negative.

We shall use the following dyadic consequence of Weyl's formula for the
eigenvalue counting function; see, for instance, \cite{HuangSoggeCPDEWeyllaw}.
There exist constants \(C_W>0\) and \(L_0>0\), depending only on \(V\) and
\(d\), such that, for every \(L\ge L_0\),
\begin{equation}\label{eq:weyl-dyadic}
	\#\{n:L<\lambda_n\le2L\}
	\le
	C_W(2L)^{d/2}.
\end{equation}

We also record a time-localized form of the classical observability estimate.
Applying Proposition~\ref{prop-ob-class} with \(\tau=T/6\) to
\(e^{-i(T/3)H}u_0\), and using the unitarity and group properties of
\(e^{-itH}\), we obtain
\begin{equation}\label{equ-312-1}
	C_1\|u_0\|^2_{L^2(\mathbb T^d)}
	\le
	\int_{T/3}^{T/2}
	\|e^{it(\Delta-V)}u_0\|^2_{L^2(E)}\,\d t,
	\qquad
	\forall u_0\in L^2(\mathbb T^d;\mathbb C),
\end{equation}
where \(C_1=C_0(V,d,T/6,E)>0\).

\begin{lemma}[High-frequency real-part estimate]\label{lem-high-V}
	Let \(C_1>0\) be the constant in \eqref{equ-312-1}. Then there exists
	\(N_0>0\), depending only on \(V,d,T,E\), such that, for all
	\(N\ge N_0\) and all \(u_0\in L^2(\mathbb T^d;\mathbb C)\),
	\begin{equation}\label{equ-312-2}
		\frac{C_1}{4}\|(I-\Pi_N)u_0\|^2_{L^2(\mathbb T^d)}
		\le
		\int_0^T
		\|\operatorname{Re}e^{it(\Delta-V)}(I-\Pi_N)u_0\|^2_{L^2(E)}\,\d t.
	\end{equation}
\end{lemma}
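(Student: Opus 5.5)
Write \(w_0=(I-\Pi_N)u_0\) and \(w(t)=e^{-itH}w_0=\sum_{\lambda_n>N}a_ne^{-it\lambda_n}\varphi_n\). Since \(V\) is real, we may take the \(\varphi_n\) real-valued, so \(\overline{w(t)}=\sum_{\lambda_n>N}\bar a_ne^{it\lambda_n}\varphi_n\). The identity \(|\operatorname{Re}w|^2=\tfrac12|w|^2+\tfrac12\operatorname{Re}(w^2)\) then gives
\[
\int_0^T\|\operatorname{Re}w\|_{L^2(E)}^2\,\d t
\ge
\frac12\int_{T/3}^{T/2}\|w\|_{L^2(E)}^2\,\d t
+\frac12\operatorname{Re}\int_{0}^{T}\chi(t)\int_E w(t,x)^2\,\d x\,\d t
+\Bigl(\text{error from restricting the time interval}\Bigr).
\]
We will avoid the restriction error by working with a smooth cutoff \(\chi\in C_c^\infty((0,T))\), \(0\le\chi\le1\), \(\chi=1\) on \([T/3,T/2]\). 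Then
\[
\int_0^T\|\operatorname{Re}w\|^2_{L^2(E)}\,\d t\ge\int_0^T\chi\,\|\operatorname{Re}w\|^2_{L^2(E)}\,\d t
=\frac12\int\chi\|w\|_{L^2(E)}^2\,\d t+\frac12\operatorname{Re}\int\chi\int_E w^2\,\d x\,\d t.
\]
By \eqref{equ-312-1}, the first term is at least \(\tfrac{C_1}{2}\|w_0\|^2\). It therefore suffices to show that the cross term satisfies
\[
\Bigl|\int\chi\int_E w^2\Bigr|\le\tfrac{C_1}{2}\|w_0\|^2
\]
once \(N\) is large. This yields the constant \(C_1/4\).

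Expanding the cross term,
\[
\int_0^T\chi(t)\int_E w^2\,\d x\,\d t=\sum_{\lambda_m,\lambda_n>N}a_ma_n\,\hat\chi(\lambda_m+\lambda_n)\,M_{mn},
\qquad M_{mn}=\int_E\varphi_m\varphi_n\,\d x,
\]
where \(\hat\chi(\xi)=\int\chi(t)e^{-it\xi}\d t\) decays faster than any power. The key point is that for \(\lambda_m,\lambda_n>N\ge N_0>0\) the frequency \(\lambda_m+\lambda_n\ge\max(\lambda_m,\lambda_n)\) is large, so there is no resonance. This is where the forward/backward coupling becomes harmless, in contrast to the stationary modes \(\lambda=0\) that produce the obstruction.

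The main obstacle is summing over the infinite double index without losing to the multiplicities. I will organize the eigenvalues into dyadic blocks \(B_k=\{n:2^kN<\lambda_n\le2^{k+1}N\}\) and bound the bilinear form blockwise. On the pair of blocks \((B_k,B_l)\), use \(|M_{mn}|\le1\) and \(|\hat\chi(\lambda_m+\lambda_n)|\le C_K(2^{\max(k,l)}N)^{-K}\). Cauchy--Schwarz gives
\[
\sum_{m\in B_k,\,n\in B_l}|a_m||a_n|\le(\#B_k\#B_l)^{1/2}\|a\|_{B_k}\|a\|_{B_l},
\]
and \eqref{eq:weyl-dyadic} bounds \(\#B_k\le C_W(2^{k+1}N)^{d/2}\). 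Choosing \(K=d+1\), each block pair contributes at most
\[
C\,(2^{\max(k,l)}N)^{d/2-K}\,\|a\|_{B_k}\|a\|_{B_l}\le C N^{-d/2-1}2^{-\max(k,l)}\|a\|_{B_k}\|a\|_{B_l}.
\]
This kernel is summable by Schur's test, so the whole cross term is at most \(CN^{-1}\|w_0\|^2\). Finally choose \(N_0\) with \(CN_0^{-1}\le C_1/2\) and \(N_0\ge L_0\). An alternative that avoids Weyl is the bound \(|M_{mn}|\)-free estimate \(|\sum a_ma_n\hat\chi(\lambda_m+\lambda_n)M_{mn}|\le\|\hat\chi(\lambda_m+\lambda_n)\|_{HS}\|a\|^2\). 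That route needs the same counting to show the Hilbert--Schmidt norm of the kernel is \(O(N^{-1})\), so I will go with the dyadic version as the main argument.
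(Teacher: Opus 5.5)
Your proposal is correct and follows essentially the same route as the paper: a smooth cutoff equal to \(1\) on \([T/3,T/2]\), the splitting \(|\operatorname{Re}w|^2=\tfrac12|w|^2+\tfrac12\operatorname{Re}(w^2)\), the classical estimate \eqref{equ-312-1} for the first term, and rapid decay of \(\widehat{\chi}(\lambda_m+\lambda_n)\) combined with the dyadic Weyl bound for the cross term. The differences are minor. You sum the bilinear form by blockwise Cauchy--Schwarz and a Schur test on dyadic blocks, whereas the paper uses \(2|c_m||c_n|\le|c_m|^2+|c_n|^2\) and the row-sum quantity \(K_N\). Also, the paper proves the bound for finite spectral sums and then passes to general data by density; your direct infinite expansion needs the same truncation-and-limit step, since Fubini alone does not justify interchanging the time integral with the double sum.
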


\begin{proof}
		Let \(\chi\) be a smooth nonnegative cutoff function on \(\mathbb R\) such
	that \(0\leq\chi\leq1\), \(\chi=1\) on \([1/3,1/2]\), and
	\(\operatorname{supp}\chi\subset[0,1]\). Set
	\(\chi_T(t)=\chi(t/T)\). Then
	\(\chi_T=1\) on \([T/3,T/2]\) and
	\(\operatorname{supp}\chi_T\subset[0,T]\).

	Since \(H=-\Delta+V\) has real coefficients, we have
	\[
		\operatorname{Re}e^{it(\Delta-V)}\varphi
		=
		\frac12
		\left(
		e^{it(\Delta-V)}\varphi
		+
		e^{-it(\Delta-V)}\overline{\varphi}
		\right).
	\]
	Therefore,
	\begin{equation}\label{equ-312-3}
		\begin{aligned}
			\int_0^T
			\|\operatorname{Re}e^{it(\Delta-V)}\varphi\|_{L^2(E)}^2\,\d t
			&\ge
			\int_{\mathbb R}\chi_T(t)
			\|\operatorname{Re}e^{it(\Delta-V)}\varphi\|_{L^2(E)}^2\,\d t\\
			&=
			\frac12
			\int_{\mathbb R}\chi_T(t)
			\|e^{it(\Delta-V)}\varphi\|_{L^2(E)}^2\,\d t\\
			&\quad+
			\frac14
			\int_{\mathbb R}\chi_T(t)\int_E
			\left[
			\left(e^{it(\Delta-V)}\varphi\right)^2
			+
			\left(e^{-it(\Delta-V)}\overline{\varphi}\right)^2
			\right]\,\d x\,\d t .
		\end{aligned}
	\end{equation}

		We now estimate the two terms on the right-hand side of
	\eqref{equ-312-3}. First, since \(\chi_T=1\) on \([T/3,T/2]\), the
	estimate \eqref{equ-312-1} gives
	\begin{equation}\label{equ-312-3.5}
		\frac12
		\int_{\mathbb R}\chi_T(t)
		\|e^{it(\Delta-V)}\varphi\|_{L^2(E)}^2\,\d t
		\ge
		\frac{C_1}{2}
		\|\varphi\|_{L^2(\mathbb T^d)}^2 .
	\end{equation}
		Second, we claim that there exists \(N_0>0\) such that, for every
	\(N\ge N_0\) and every
	\(\varphi\in\operatorname{Ran}(I-\Pi_N)\),
	\begin{equation}\label{equ-312-4}
		\left|
		\int_{\mathbb R}\chi_T(t)\int_E
		\left[
		\left(e^{it(\Delta-V)}\varphi\right)^2
		+
		\left(e^{-it(\Delta-V)}\overline{\varphi}\right)^2
		\right]\,\d x\,\d t
		\right|
		\le
		C_1\|\varphi\|_{L^2(\mathbb T^d)}^2 .
	\end{equation}
	Assume for the moment that \eqref{equ-312-4} holds. Taking
	\(\varphi=(I-\Pi_N)u_0\) in \eqref{equ-312-3}, and using
	\eqref{equ-312-3.5} and \eqref{equ-312-4}, we obtain
	\[
	\begin{aligned}
		\int_0^T
		\|\operatorname{Re}e^{it(\Delta-V)}(I-\Pi_N)u_0\|_{L^2(E)}^2\,\d t
		\geq
		\frac{C_1}{4}
		\|(I-\Pi_N)u_0\|_{L^2(\mathbb T^d)}^2 .
	\end{aligned}
	\]
	This is exactly \eqref{equ-312-2}.

It remains to prove \eqref{equ-312-4}. To this end, we first prove it for
high-frequency finite spectral sums and then extend the result to the general
case by approximation.

Let \(N\) be sufficiently large, to be fixed below. Let \(F\) be a finite
set of indices such that \(\lambda_n>N\) for every \(n\in F\), and let
\[
	\varphi=\sum_{n\in F}c_n\varphi_n .
\] Then
\[
	e^{it(\Delta-V)}\varphi
	=
	\sum_{n\in F}c_ne^{-i\lambda_n t}\varphi_n .
\]
Define
\[
	I_+(\varphi)
	:=
	\int_{\mathbb R}\chi_T(t)\int_E
	\left(e^{it(\Delta-V)}\varphi\right)^2\,\d x\,\d t .
\]
	Expanding the square and using Fubini's theorem, we obtain
\begin{equation}\label{equ-Iplus-expansion}
	I_+(\varphi)
	=
	\sum_{m,n\in F}
	\widehat{\chi_T}(\lambda_m+\lambda_n)c_mc_n
	\int_E\varphi_m\varphi_n\,\d x .
\end{equation}
Since
\[
	|\widehat{\chi_T}(\xi)|
	\leq
	C_{k,T}(1+T|\xi|)^{-k},
	\qquad k\geq1,
\]
by taking absolute values in \eqref{equ-Iplus-expansion}, and using
\[
	\left|\int_E\varphi_m\varphi_n\,\d x\right|
	\leq
	\|\varphi_m\|_{L^2(E)}
	\|\varphi_n\|_{L^2(E)}
	\leq
	1,
\]
we get
\[
	|I_+(\varphi)|
	\leq
	C_{k,T}
	\sum_{m,n\in F}
	(1+T|\lambda_m+\lambda_n|)^{-k}|c_m||c_n|.
\]
This, along with \(2|c_m||c_n|\leq |c_m|^2+|c_n|^2\), implies
\begin{equation}\label{3.13-7-11-w}
\begin{aligned}
	|I_+(\varphi)|
	&\leq
	\frac{C_{k,T}}2
	\sum_{m,n\in F}
	(1+T|\lambda_m+\lambda_n|)^{-k}
	\left(|c_m|^2+|c_n|^2\right)\\
	&\leq
	C_{k,T}K_N\sum_{n\in F}|c_n|^2 ,
\end{aligned}
\end{equation}
where
\begin{equation}\label{3.14-7-11-wgs}
	K_N
	=
	\sup_{\lambda_n>N}
	\sum_{\lambda_m>N}
	(1+T|\lambda_m+\lambda_n|)^{-k}.
\end{equation}

	Since \(\{\varphi_n\}\) is an orthonormal basis, we have $	\sum_{n\in F}|c_n|^2
		=
		\|\varphi\|_{L^2(\mathbb T^d)}^2$.
	Along with \eqref{3.13-7-11-w}, this implies
		\begin{equation}\label{equ-Iplus-KN}
		|I_+(\varphi)|
		\le
		C_{k,T}K_N
		\|\varphi\|_{L^2(\mathbb T^d)}^2.
	\end{equation}
	
	We now estimate \(K_N\). Recall that \(L_0\) is the constant appearing in
	the dyadic Weyl estimate \eqref{eq:weyl-dyadic}. Choose
	\(N_0\ge\max\{L_0,1\}\). For \(N\ge N_0\), fix \(n\) with
	\(\lambda_n>N\). Decompose the possible \(\lambda_m\)'s into dyadic shells
	\[
		2^jN<\lambda_m\le2^{j+1}N,\qquad j\ge0.
	\]
	If \(\lambda_n>N\) and \(2^jN<\lambda_m\le2^{j+1}N\), then  $\lambda_m+\lambda_n\ge2^jN$.
	Therefore,
	\[
		(1+T|\lambda_m+\lambda_n|)^{-k}
		\le
		C_T(2^jN)^{-k}.
	\]
	The above, together with \eqref{eq:weyl-dyadic}, yields
\begin{equation}\label{equ-KN-estimate}
		\begin{aligned}
			\sum_{\lambda_m>N}
			(1+T|\lambda_m+\lambda_n|)^{-k}
			&\le
			C_T
			\sum_{j\ge0}
			\#\{m:2^jN<\lambda_m\le2^{j+1}N\}
			(2^jN)^{-k} \\
			&\le
			C_T
			\sum_{j\ge0}
			(2^jN)^{d/2}(2^jN)^{-k}.
		\end{aligned}
	\end{equation}
	If \(k>d/2\), the series in \eqref{equ-KN-estimate} converges. Hence, by
\eqref{3.14-7-11-wgs} and \eqref{equ-KN-estimate},
\[
	K_N\leq C_TN^{d/2-k}.
\]
Taking \(k>d/2+1\), we have \(K_N\to0\) as \(N\to\infty\). Hence, after
increasing \(N_0\) if necessary,
\[
	C_{k,T}K_N\leq\frac{C_1}{2}.
\]
Combining this with \eqref{equ-Iplus-KN}, we obtain
\begin{equation}\label{equ-Iplus-final}
	|I_+(\varphi)|
	\leq
	\frac{C_1}{2}\|\varphi\|_{L^2(\mathbb T^d)}^2.
\end{equation}

By the same argument, for
\[
I_-(\varphi)
:=
\int_{\mathbb R}\chi_T(t)\int_E
\left(e^{-it(\Delta-V)}\overline{\varphi}\right)^2
\,\d x\,\d t ,
\]
we have
\begin{equation}\label{equ-Iminus-final}
		|I_-(\varphi)|
		\le
		\frac{C_1}{2}\|\varphi\|_{L^2(\mathbb T^d)}^2.
	\end{equation}
	Adding \eqref{equ-Iplus-final} and \eqref{equ-Iminus-final}, we obtain
	\eqref{equ-312-4} for high-frequency finite spectral sums.

It remains to pass to general
\(\varphi\in\operatorname{Ran}(I-\Pi_N)\). Choose finite spectral
truncations \(\varphi_M\in\operatorname{Ran}(I-\Pi_N)\) such that
\[
	\varphi_M\to\varphi
	\quad\text{in }L^2(\mathbb T^d;\mathbb C),
	\qquad\text{as }M\to\infty .
\]
By \eqref{equ-Iplus-final} and \eqref{equ-Iminus-final},
\eqref{equ-312-4} holds for each \(\varphi_M\).

Set
\[
	a_M(t,x)=
	(e^{it(\Delta-V)}\varphi_M)(x),
	\qquad
	a(t,x)=
	(e^{it(\Delta-V)}\varphi)(x).
\]
By the unitarity of \(e^{it(\Delta-V)}\),
\begin{equation}\label{eq:aM-a-convergence}
\|a_M-a\|_{L^2((0,T)\times E)}^2
\leq
T\|\varphi_M-\varphi\|_{L^2(\mathbb T^d)}^2
\to0,
\qquad\text{as }M\to\infty .
\end{equation}
Since \(\chi_T\in L^\infty(\mathbb R)\) and
\(\operatorname{supp}\chi_T\subset[0,T]\), we have
\begin{equation}\label{eq:aM-square-limit}
\begin{aligned}
&\left|
\int_{\mathbb R}\chi_T(t)\int_E
(a_M(t,x)^2-a(t,x)^2)\,\d x\,\d t
\right|\\
&\leq
\|\chi_T\|_{L^\infty}
\|a_M-a\|_{L^2((0,T)\times E)}
\|a_M+a\|_{L^2((0,T)\times E)}
\to0, \qquad\text{as }M\to\infty.
\end{aligned}
\end{equation}

The same argument applied to
\[
b_M(t,x)=
(e^{-it(\Delta-V)}\overline{\varphi_M})(x),
\qquad
b(t,x)=
(e^{-it(\Delta-V)}\overline{\varphi})(x)
\]
gives
\begin{equation}\label{eq:bM-square-limit}
\int_{\mathbb R}\chi_T(t)\int_E
(b_M(t,x)^2-b(t,x)^2)\,\d x\,\d t
\to0,
\qquad\text{as }M\to\infty .
\end{equation}

Therefore, by \eqref{eq:aM-square-limit} and
\eqref{eq:bM-square-limit}, we may pass to the limit in
\eqref{equ-312-4}. Hence \eqref{equ-312-4} holds for every
\(\varphi\in\operatorname{Ran}(I-\Pi_N)\). Consequently,
\eqref{equ-312-2} follows from \eqref{equ-312-3},
\eqref{equ-312-3.5}, and \eqref{equ-312-4}.
\end{proof}

We now prove \propref{prop-compact-remainder}.

\begin{proof}[Proof of \propref{prop-compact-remainder}]
	Let \(N=N_0\) be given by \lemref{lem-high-V}. For
	\(u_0\in L^2(\mathbb T^d;\mathbb C)\), we decompose
	\[
		u_0=\Pi_Nu_0+(I-\Pi_N)u_0,
	\]
	which leads to
	\[
		\|u_0\|^2_{L^2(\mathbb T^d)}
		=
		\|\Pi_Nu_0\|^2_{L^2(\mathbb T^d)}
		+
		\|(I-\Pi_N)u_0\|^2_{L^2(\mathbb T^d)} .
	\]
	By \lemref{lem-high-V}, we obtain
	\begin{equation}\label{equ-314-high-frequency}
		\|(I-\Pi_N)u_0\|^2_{L^2(\mathbb T^d)}
		\leq
		\frac4{C_1}
		\int_0^T
		\|\operatorname{Re}e^{it(\Delta-V)}(I-\Pi_N)u_0\|^2_{L^2(E)}
		\,\d t .
	\end{equation}
	Therefore,
	\[
	\|u_0\|^2_{L^2(\mathbb T^d)}
	\leq
	\|\Pi_Nu_0\|^2_{L^2(\mathbb T^d)}
	+
	\frac4{C_1}
	\int_0^T
	\|\operatorname{Re}e^{it(\Delta-V)}(I-\Pi_N)u_0\|^2_{L^2(E)}
	\,\d t .
	\]

	Using
	\[
		\operatorname{Re}e^{it(\Delta-V)}(I-\Pi_N)u_0
		=
		\operatorname{Re}e^{it(\Delta-V)}u_0
		-
		\operatorname{Re}e^{it(\Delta-V)}\Pi_Nu_0
	\]
	and the inequality
	\(\|a-b\|^2\leq2\|a\|^2+2\|b\|^2\), we have
	\begin{equation}\label{equ-314-4}
	\begin{aligned}
		&\int_0^T
		\|\operatorname{Re}e^{it(\Delta-V)}(I-\Pi_N)u_0\|^2_{L^2(E)}
		\,\d t\\
		&\leq
		2\int_0^T
		\|\operatorname{Re}e^{it(\Delta-V)}u_0\|^2_{L^2(E)}
		\,\d t +
		2\int_0^T
		\|\operatorname{Re}e^{it(\Delta-V)}\Pi_Nu_0\|^2_{L^2(E)}
		\,\d t\\
		&\leq 	2\int_0^T
		\|\operatorname{Re}e^{it(\Delta-V)}u_0\|^2_{L^2(E)}
		\,\d t+2T\|\Pi_Nu_0\|^2_{L^2(\mathbb T^d)}.
	\end{aligned}
	\end{equation}
Here we used the fact
	\[
		\|\operatorname{Re}e^{it(\Delta-V)}\Pi_Nu_0\|_{L^2(E)}
		\leq
		\|e^{it(\Delta-V)}\Pi_Nu_0\|_{L^2(\mathbb T^d)}
		=
		\|\Pi_Nu_0\|_{L^2(\mathbb T^d)} .
	\]
	Combining \eqref{equ-314-high-frequency} and
	\eqref{equ-314-4}, we obtain
	\begin{equation}\label{equ-314-5}
		\|u_0\|^2_{L^2(\mathbb T^d)}
		\leq
		C\int_0^T
		\|\operatorname{Re}e^{it(\Delta-V)}u_0\|^2_{L^2(E)}
		\,\d t
		+
		C\|\Pi_Nu_0\|^2_{L^2(\mathbb T^d)},
	\end{equation}
	where \(C\) depends only on \(V,d,T,E\).

	It remains to estimate the low-frequency term. Since \(N=N_0\) is fixed,
	the range of \(\Pi_N\) is finite-dimensional. Writing
	$		u_0=\sum_n c_n\varphi_n$,
		we see
	$		\Pi_Nu_0=\sum_{\lambda_n\leq N}c_n\varphi_n$.
		Therefore,
	\begin{equation}\label{3.26-7-9-wgs}
		\|\Pi_Nu_0\|_{L^2(\mathbb T^d)}^2
		 =
		\sum_{\lambda_n\leq N}|c_n|^2 \leq
		C_N
		\sum_{\lambda_n\leq N}
		(1+|\lambda_n|^2)^{-2}|c_n|^2 \leq
		C_N\|u_0\|_{\mathcal H^{-2}}^2 .
	\end{equation}
	Here \(C_N<\infty\) since the range of \(\Pi_N\) is finite-dimensional.
	Moreover, \(N=N_0\) depends only on \(V,d,T,E\), and hence so does
	\(C_N\).

	Substituting \eqref{3.26-7-9-wgs} into \eqref{equ-314-5} yields
	\eqref{equ-314-1}. This completes the proof of
	\propref{prop-compact-remainder}.
\end{proof}

\subsection{Unique continuation and the observable subspace estimate}
\label{subsec:ucp-observable-subspace}

Let
\begin{equation}\label{equ-NT-definition}
	\mathcal N_T
	=
	\left\{
	u_0\in L^2(\mathbb T^d;\mathbb C):
	\operatorname{Re}e^{it(\Delta-V)}u_0=0
	\text{ on }(0,T)\times E
	\right\}.
\end{equation}
The two main conclusions of this subsection are the following lemma and
proposition.

\begin{lemma}[Unique continuation for the real-part observation]\label{lem-ucp}
	With the above notation, one has
	\[
	\mathcal N_T=\mathcal K_{\operatorname{Im}}.
	\]
	Equivalently, the only initial data whose real-part observation vanishes on
	\((0,T)\times E\) are the purely imaginary stationary modes.
\end{lemma}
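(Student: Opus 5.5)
The plan is a compactness--uniqueness argument: first show that \(\mathcal N_T\) is finite-dimensional and invariant under the flow, then locate a spectral element in it and apply elliptic unique continuation. I expect the invariance step to be the main technical point.

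\emph{Easy inclusion.} If \(z\in\mathcal K_{\operatorname{Im}}\), then \(Hz=0\), so \(e^{it(\Delta-V)}z=z\) for every \(t\). Since \(z\) is purely imaginary, \(\operatorname{Re}e^{it(\Delta-V)}z=0\) everywhere. Hence \(\mathcal K_{\operatorname{Im}}\subset\mathcal N_T\).

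\emph{Finite dimensionality.} For the reverse inclusion, I would first note that \(\mathcal N_T\) is a closed real subspace. For \(u_0\in\mathcal N_T\), \propref{prop-compact-remainder} gives \(\|u_0\|_{L^2}\le C\|u_0\|_{\mathcal H^{-2}}\). Since \(L^2\hookrightarrow\mathcal H^{-2}\) is compact, the unit ball of \(\mathcal N_T\) is compact, so \(\dim_\R\mathcal N_T<\infty\). The same holds for every \(\mathcal N_{T'}\) with \(0<T'\le T\), since \propref{prop-compact-remainder} holds for every time, with a constant depending on it.

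\emph{Invariance (main obstacle).} The family \(\mathcal N_{T-\varepsilon}\) increases as \(\varepsilon\) grows. Its dimensions are finite integers, nonincreasing as \(\varepsilon\downarrow0\), so they are constant for \(\varepsilon\in(0,\varepsilon_0)\). A monotone family of constant finite dimension is constant, so \(\mathcal N_{T-\varepsilon}\) is the same space for all such \(\varepsilon\). Since \(\bigcap_\varepsilon\mathcal N_{T-\varepsilon}=\mathcal N_T\) (vanishing on \((0,T-\varepsilon)\times E\) for all \(\varepsilon\) means vanishing on \((0,T)\times E\)), this common space is \(\mathcal N_T\). Now for \(u\in\mathcal N_T\) and \(s\in(0,\varepsilon_0)\), we have \(e^{-isH}u\in\mathcal N_{T-s}=\mathcal N_T\). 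The quotients \((e^{-isH}u-u)/s\) lie in the finite-dimensional, hence closed, space \(\mathcal N_T\) and converge to \(-iHu\) in \(\mathcal H^{-2}\). Therefore \(u\in D(H)\) and \(A:=-iH\) maps \(\mathcal N_T\) into itself. Because \(H\) has real coefficients, \(A\) is real-linear and skew-symmetric for \(\langle\cdot,\cdot\rangle_\R\). Hence \(A^2=-H^2\) is symmetric on \(\mathcal N_T\), and so is \(H^2\).

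\emph{Spectral unique continuation.} Let \(W\) be the real orthogonal complement of \(\mathcal K_{\operatorname{Im}}\) inside \(\mathcal N_T\). Since \(A\) is skew and annihilates \(\mathcal K_{\operatorname{Im}}\), the space \(W\) is \(A\)-invariant, hence \(H^2\)-invariant. If \(W\neq\{0\}\), it contains \(u\neq0\) with \(H^2u=\mu^2u\), \(\mu\ge0\); I will show this is impossible.

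\emph{Case \(\mu>0\).} Write \(u=u_++u_-\) with \(Hu_\pm=\pm\mu u_\pm\). Then
\[
2\operatorname{Re}e^{-itH}u=e^{-i\mu t}(u_++\overline{u_-})+e^{i\mu t}\,\overline{(u_++\overline{u_-})} .
\]
Vanishing on \((0,T)\times E\) with \(\mu\neq0\) forces \(w:=u_++\overline{u_-}=0\) on \(E\). Since \(V\) is real, \(\overline{u_-}\) is an eigenfunction with eigenvalue \(-\mu\). As \(H\) is local, \(Hw=\mu u_+-\mu\overline{u_-}=0\) on \(E\). Combining the two relations gives \(u_+=\overline{u_-}=0\) on \(E\). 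Unique continuation for \(-\Delta+V\) with continuous \(V\) (Aronszajn--Carleman) then yields \(u_\pm\equiv0\), so \(u=0\).

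\emph{Case \(\mu=0\).} Then \(u\in\mathcal K\), so \(\operatorname{Re}u\in\mathcal K\) and \(\operatorname{Re}u=0\) on \(E\). Unique continuation gives \(\operatorname{Re}u=0\), so \(u=i\operatorname{Im}u\in\mathcal K_{\operatorname{Im}}\). Since also \(u\in W\perp\mathcal K_{\operatorname{Im}}\), this forces \(u=0\).

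Both cases contradict \(u\neq0\), so \(W=\{0\}\) and \(\mathcal N_T=\mathcal K_{\operatorname{Im}}\).
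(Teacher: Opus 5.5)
Your proof is correct. It follows the same compactness--uniqueness skeleton as the paper: finite dimensionality of $\mathcal N_T$ from \propref{prop-compact-remainder}, invariance under $A=-iH$, then elliptic unique continuation. Two steps are organized differently.

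\textbf{Invariance.} The paper works directly with the difference quotients $u_\varepsilon=(e^{-i\varepsilon H}u_0-u_0)/\varepsilon$. It bounds them in $\mathcal H^{-2}$, applies \propref{prop-compact-remainder} on $(0,T/2)$ to get a uniform $L^2$ bound, and identifies a weak limit with $-iHu_0$. It then localizes to intervals $I\Subset(0,T)$ to get $Au_0\in\mathcal N_T$. Only afterwards does it deduce finite dimensionality, from the graph-norm bound $\|Hu_0\|\le C\|u_0\|$ on $\mathcal N_T$.

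You instead obtain finite dimensionality of every $\mathcal N_{T'}$, $0<T'\le T$, immediately. You then use the classical Bardos--Lebeau--Rauch stabilization of the nested family $\mathcal N_{T-\varepsilon}$ to get $e^{-isH}\mathcal N_T\subset\mathcal N_T$ for small $s>0$. Convergence of the difference quotients then comes for free, since all norms are equivalent on a finite-dimensional space. The closedness you invoke should be read in $\mathcal H^{-2}$, where the quotients actually converge. This route avoids weak limits and the localization step, at the price of using the compact-remainder estimate for all $T'\le T$ rather than just $T/2$.

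\textbf{Finite-dimensional algebra.} The paper puts the skew operator $A|_{\mathcal N_T}$ in normal form, rules out rotation blocks, and concludes $\mathcal N_T\subset\ker H$. You instead diagonalize the symmetric, nonnegative operator $H^2=-A^2$ on $W=\mathcal N_T\cap\mathcal K_{\operatorname{Im}}^{\perp_\R}$. Your case $\mu>0$ is the rotation-block case in different coordinates: a rotation pair $p,q$ with $Ap=\lambda q$, $Aq=-\lambda p$ consists of $\lambda^2$-eigenvectors of $H^2$. Your splitting $u=u_++u_-$ with $w=u_++\overline{u_-}$ packages the paper's computation with $p,q$ and $w_\pm=p\pm iq$ somewhat more compactly. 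Your case $\mu=0$ coincides with the paper's final unique-continuation step on $\operatorname{Re}\phi$.
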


\begin{proposition}[Real-part estimate on the observable subspace]
	\label{prop-ob-M}
	There exists a constant \(C=C(V,d,T,E)>0\) such that
	\[
	\|u_0\|^2_{L^2(\mathbb T^d)}
	\le
	C\int_0^T
	\|\operatorname{Re}e^{it(\Delta-V)}u_0\|^2_{L^2(E)}\,\d t,
	\qquad
	\forall u_0\in
	\mathcal K_{\operatorname{Im}}^{\perp_{\mathbb R}}.
	\]
\end{proposition}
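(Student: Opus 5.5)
The plan is the standard compactness--uniqueness argument of Bardos--Lebeau--Rauch, run in the real Hilbert space \(\mathcal K_{\operatorname{Im}}^{\perp_{\mathbb R}}\). It uses \propref{prop-compact-remainder} and \lemref{lem-ucp}, both taken as given.

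We argue by contradiction. If the estimate of \propref{prop-ob-M} fails, there is a sequence \(u_0^k\in\mathcal K_{\operatorname{Im}}^{\perp_{\mathbb R}}\) with
\[
\|u_0^k\|_{L^2(\mathbb T^d)}=1,
\qquad
\int_0^T\|\operatorname{Re}e^{it(\Delta-V)}u_0^k\|_{L^2(E)}^2\,\d t\to0 .
\]
After passing to a subsequence, \(u_0^k\rightharpoonup u_0\) weakly in \(L^2(\mathbb T^d;\mathbb C)\).

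First, the limit \(u_0\) is nonzero. The embedding \(L^2\hookrightarrow\mathcal H^{-2}\) is compact: the eigenvalues \(\lambda_n\to\infty\), so the weights \((1+|\lambda_n|^2)^{-2}\) tend to zero. Hence \(u_0^k\to u_0\) strongly in \(\mathcal H^{-2}\). Applying \eqref{equ-314-1} to \(u_0^k\) gives
\[
1\le C\,o(1)+C\|u_0^k\|_{\mathcal H^{-2}}^2,
\]
and letting \(k\to\infty\) yields \(\|u_0\|_{\mathcal H^{-2}}^2\ge 1/C>0\). In particular \(u_0\neq0\).

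Second, \(u_0\) lies in both \(\mathcal N_T\) and \(\mathcal K_{\operatorname{Im}}^{\perp_{\mathbb R}}\).
\begin{itemize}
\item The map \(u\mapsto \mathbf 1_{(0,T)\times E}\operatorname{Re}e^{it(\Delta-V)}u\) is bounded and real-linear from \(L^2(\mathbb T^d;\mathbb C)\) (as a real Hilbert space) into \(L^2((0,T)\times E;\mathbb R)\). It therefore maps real-weakly convergent sequences to weakly convergent ones. Complex weak convergence implies real weak convergence, since \(\langle\cdot,\cdot\rangle_{\mathbb R}\) is the real part of the complex pairing. The images converge to zero in norm, so the image of \(u_0\) vanishes, i.e.\ \(u_0\in\mathcal N_T\).
\item \(\mathcal K_{\operatorname{Im}}^{\perp_{\mathbb R}}\) is a closed real subspace, hence real-weakly closed, so \(u_0\in\mathcal K_{\operatorname{Im}}^{\perp_{\mathbb R}}\).
\end{itemize}

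By \lemref{lem-ucp}, \(u_0\in\mathcal N_T=\mathcal K_{\operatorname{Im}}\). Thus \(u_0\in\mathcal K_{\operatorname{Im}}\cap\mathcal K_{\operatorname{Im}}^{\perp_{\mathbb R}}=\{0\}\), because \(\langle u_0,u_0\rangle_{\mathbb R}=\|u_0\|^2\). This contradicts the first step, and the estimate holds with some \(C=C(V,d,T,E)\).

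The argument itself is routine once the inputs are in hand. The only point needing care is that weak limits and the observation operator are taken in the real rather than the complex sense, since the observation is not complex-linear. The real analytic difficulty lies in \lemref{lem-ucp}, which is assumed here.
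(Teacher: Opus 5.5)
Your proposal is correct and follows essentially the same route as the paper's proof. Both use a normalized contradiction sequence whose weak limit lies in \(\mathcal N_T\cap\mathcal K_{\operatorname{Im}}^{\perp_{\mathbb R}}\), and is therefore zero by \lemref{lem-ucp}, and both use strong \(\mathcal H^{-2}\) convergence together with \eqref{equ-314-1}. The only difference is the order of the steps: you first use \eqref{equ-314-1} to show the limit is nonzero and then show it vanishes, while the paper first shows the limit is zero and then derives \(1\le o(1)\).
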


We first prove \lemref{lem-ucp}.

\begin{proof}[Proof of \lemref{lem-ucp}]
	We first prove the easy inclusion. If
	\(\phi\in\mathcal K_{\operatorname{Im}}\), then \(H\phi=0\) and
	\(\operatorname{Re}\phi=0\). Hence, by the spectral calculus,
	\(e^{-itH}\phi=\phi\) for all \(t\in\R\), and therefore
	\[
		\operatorname{Re}e^{-itH}\phi=0
		\quad\text{in }L^2(E),\qquad t\in(0,T).
	\]
	Thus
	\begin{equation}\label{eq:ucp-Kim-subset-NT}
		\mathcal K_{\operatorname{Im}}\subset\mathcal N_T .
	\end{equation}

	We prove the converse in two steps.

\emph{Step 1. Invariance under the real generator and finite dimensionality.}

Set \(A=-iH\). We first prove that 
\begin{equation}\label{eq:ucp-A-invariance}
\mathcal N_T\subset D(A),
\qquad	A\mathcal N_T\subset\mathcal N_T .
\end{equation}

Let \(u_0\in\mathcal N_T\) and
\begin{equation}\label{3.29-7-15-w}
	u_\varepsilon
	=
	\frac{e^{-i\varepsilon H}u_0-u_0}{\varepsilon},
	\qquad
	0<\varepsilon<T/2 .
\end{equation}
Then, for a.e. \(s\in(0,T/2)\),
\begin{equation}\label{3.30-7-15-wg}
\operatorname{Re}(e^{-isH}u_\varepsilon)|_E
=
\frac{
\operatorname{Re}(e^{-i(s+\varepsilon)H}u_0)|_E
-
\operatorname{Re}(e^{-isH}u_0)|_E
}{\varepsilon}
=0\qquad\mbox{in}\; L^2(E;\mathbb R).
\end{equation}

Moreover, by  the spectral theorem,  \eqref{3.29-7-15-w} and
\(
|e^{-i\varepsilon\lambda}-1|\leq \varepsilon|\lambda|,
\)
we obtain
\begin{equation}\label{eq:ucp-ueps-Hminus2}
\|u_\varepsilon\|_{\mathcal H^{-2}}
\leq C\|u_0\|_{L^2(\mathbb T^d)} .
\end{equation}
Then, by  \eqref{3.30-7-15-wg} and \eqref{eq:ucp-ueps-Hminus2}, we apply
Proposition~\ref{prop-compact-remainder}, with \(T/2\), to obtain
\[
\|u_\varepsilon\|_{L^2(\mathbb T^d)}
\leq C\|u_0\|_{L^2(\mathbb T^d)} .
\]
Therefore, for some \(\varepsilon_j\to0\),
\begin{equation}\label{eq:ucp-weak-convergence}
u_{\varepsilon_j}\rightharpoonup w
\quad\text{in the real Hilbert space }L^2(\mathbb T^d;\mathbb C).
\end{equation}

Let \(H\varphi_n=\lambda_n\varphi_n\). Then
\begin{equation}\label{3.33-7-15-wwgg}
	(u_\varepsilon,\varphi_n)
	=
	\frac{e^{-i\varepsilon\lambda_n}-1}{\varepsilon}
	(u_0,\varphi_n),
	\qquad n\geq1 .
\end{equation}
Testing \eqref{eq:ucp-weak-convergence} with
\(\varphi_n\) and \(i\varphi_n\), and using \eqref{3.33-7-15-wwgg}, we get
\begin{equation}\label{eq:ucp-spectral-limit}
	(w,\varphi_n)
	=
	-i\lambda_n(u_0,\varphi_n),
	\qquad n\geq1 .
\end{equation}
By \eqref{eq:ucp-spectral-limit},
\[
	\lambda_n^2|(u_0,\varphi_n)|^2
	=
	|(w,\varphi_n)|^2,
	\qquad n\geq1 .
\]
Since \(w\in L^2(\mathbb T^d;\mathbb C)\), Parseval's identity yields
\[
	\sum_n\lambda_n^2|(u_0,\varphi_n)|^2
	=
	\sum_n|(w,\varphi_n)|^2
	=
	\|w\|_{L^2(\mathbb T^d)}^2
	<\infty .
\]
Hence \(u_0\in D(H)\). Moreover, by the spectral representation of \(H\) and
\eqref{eq:ucp-spectral-limit},
\begin{equation}\label{eq:ucp-weak-limit-Au0}
	w=Au_0=-iHu_0
	\quad\text{in }L^2(\mathbb T^d;\mathbb C).
\end{equation}

Let \(I\Subset(0,T)\). For large \(j\),
\(s,s+\varepsilon_j\in(0,T)\) for \(s\in I\), hence
\[
\operatorname{Re}(e^{-isH}u_{\varepsilon_j})|_E=0
\quad\text{in }L^2(I\times E;\mathbb R).
\]
Passing to the weak limit by the boundedness of
\[
v\mapsto e^{-isH}v:
L^2(\mathbb T^d;\mathbb C)\to L^2(I\times E;\mathbb C),
\]
and using \eqref{eq:ucp-weak-limit-Au0}, we obtain
\[
\operatorname{Re}(e^{-isH}Au_0)|_E=0
\quad\text{in }L^2(I\times E;\mathbb R).
\]
Since \(I\Subset(0,T)\) was arbitrary, we obtain \(Au_0\in\mathcal N_T\), and thus
\eqref{eq:ucp-A-invariance}.

Finally, \(\mathcal N_T\) is closed since
\[
v\mapsto\operatorname{Re}(e^{-itH}v)|_E
\]
is bounded from \(L^2(\mathbb T^d;\mathbb C)\) to
\(L^2((0,T)\times E;\mathbb R)\).
Moreover, by \eqref{eq:ucp-A-invariance} and Proposition~\ref{prop-compact-remainder}, we see
\[
\|Hu_0\|_{L^2(\mathbb T^d)}
\leq C\|Au_0\|_{\mathcal H^{-2}}
\leq C\|u_0\|_{L^2(\mathbb T^d)},\;\;\mbox{for each}\;u_0\in\mathcal N_T.
\]
Hence the unit ball of \(\mathcal N_T\) is bounded in the graph norm of
\(H\). Since \(H\) has compact resolvent, this unit ball is relatively
compact in \(L^2(\mathbb T^d;\mathbb C)\).  
Therefore, by Riesz's lemma,
\begin{equation}\label{eq:ucp-NT-finite-dimensional}
	\dim_{\mathbb R}\mathcal N_T<\infty .
\end{equation}

\emph{Step 2. Identification of \(\mathcal N_T\).}

The aim of this step is to prove
\(\mathcal N_T\subset\mathcal K_{\operatorname{Im}}\).
To this end, we first prove that \(A_T:=A|_{\mathcal N_T}=0\), which yields
\(\mathcal N_T\subset\ker H\). The desired inclusion then follows from the
definition of \(\mathcal N_T\) and elliptic unique continuation.

By \eqref{eq:ucp-A-invariance} and
\eqref{eq:ucp-NT-finite-dimensional}, \(\mathcal N_T\) is a finite-dimensional
real Hilbert space invariant under \(A=-iH\). For \(u,v\in\mathcal N_T\), the self-adjointness of \(H\) gives
\[
\langle Au,v\rangle_{\mathbb R}
=
-\langle u,Av\rangle_{\mathbb R}.
\]
Thus \(A_T\) is skew-symmetric on \(\mathcal N_T\).

We first show that \(A_T\) has no nonzero rotation block. Suppose otherwise.
Then there exist linearly independent
\(
	p,q\in\mathcal N_T\subset L^2(\mathbb T^d;\mathbb C)
\) and \(\lambda>0\) such that
\begin{equation}\label{eq:ucp-rotation-block}
	Ap=\lambda q,
	\qquad
	Aq=-\lambda p
	\quad\text{in }L^2(\mathbb T^d;\mathbb C).
\end{equation}
Hence
\begin{align}\label{equ-83-1}
	e^{tA}p=
\cos(\lambda t)p+\sin(\lambda t)q,
\qquad t\in\mathbb R .	
\end{align} 
Since \(p\in\mathcal N_T\), the definition of \(\mathcal N_T\) and the
identity \eqref{equ-83-1} give
\[
	\cos(\lambda t)\operatorname{Re}p|_E
	+
	\sin(\lambda t)\operatorname{Re}q|_E
	=
	0
	\quad\text{in }L^2(E;\mathbb R),
	\qquad t\in(0,T).
\]
Taking the \(L^2(0,T)\)-pairings with
\(\cos(\lambda t)\) and \(\sin(\lambda t)\), and using the invertibility of
the corresponding Gram matrix, we obtain
\begin{equation}\label{eq:ucp-Re-pq-zero-on-E}
	\operatorname{Re}p|_E=0,
	\qquad
	\operatorname{Re}q|_E=0
	\quad\text{in }L^2(E;\mathbb R).
\end{equation}
Therefore,
\begin{equation}\label{eq:ucp-pq-imag-on-E}
	p|_E,q|_E\in L^2(E;i\mathbb R).
\end{equation}

By \eqref{eq:ucp-rotation-block} and \(A=-iH\),
\begin{equation}\label{eq:ucp-Hpq-relations}
	Hp=i\lambda q,
	\qquad
	Hq=-i\lambda p
	\quad\text{in }L^2(\mathbb T^d;\mathbb C).
\end{equation}
By \eqref{eq:ucp-pq-imag-on-E} and
\eqref{eq:ucp-Hpq-relations},
\begin{equation}\label{eq:ucp-Hpq-real-on-E}
	Hp|_E,\ Hq|_E\in L^2(E;\mathbb R).
\end{equation}
Since \(H\) has real coefficients, \eqref{eq:ucp-Re-pq-zero-on-E} implies
\[
	\operatorname{Re}(Hp)|_E=0,
	\qquad
	\operatorname{Re}(Hq)|_E=0
	\quad\text{in }L^2(E;\mathbb R).
\]
Combining this with \eqref{eq:ucp-Hpq-real-on-E}, we obtain
\begin{equation}\label{eq:ucp-Hpq-zero-on-E}
	Hp|_E=0,
	\qquad
	Hq|_E=0
	\quad\text{in }L^2(E;\mathbb R).
\end{equation}
By \eqref{eq:ucp-Hpq-relations} and
\eqref{eq:ucp-Hpq-zero-on-E},
\begin{equation}\label{eq:ucp-pq-zero-on-E}
	p|_E=0,
	\qquad
	q|_E=0
	\quad\text{in }L^2(E;\mathbb C).
\end{equation}

Set
\[
	w_+=p+iq,\qquad w_-=p-iq .
\]
Then
\[
	(H-\lambda)w_+=0,
	\qquad
	(H+\lambda)w_-=0
	\quad\text{in }L^2(\mathbb T^d;\mathbb C).
\]
Moreover, by \eqref{eq:ucp-pq-zero-on-E},
\[
	w_+|_E=w_-|_E=0
	\quad\text{in }L^2(E;\mathbb C).
\]
The standard elliptic unique continuation property \cite{ScheSimon80} yields
\[
	w_+=w_-=0
	\quad\text{in }L^2(\mathbb T^d;\mathbb C),
\]
which contradicts the choice of a nonzero rotation block.

Therefore \(A_T\) has no nonzero rotation block. By the finite-dimensional
skew-adjoint normal form, we have $	A_T=0$. This, along with definitions of $A_T$ and $\mathcal N_T$, yields
\begin{equation}\label{eq:ucp-NT-subset-K}
	\mathcal N_T\subset\ker H=\mathcal K .
\end{equation}

Let \(\phi\in\mathcal N_T\). By \eqref{eq:ucp-NT-subset-K} and the spectral
theorem, we obtain
\begin{equation}\label{3.44-7-14-w}
	H\phi=0,
	\qquad
	e^{-itH}\phi=\phi,
	\quad t\in\mathbb R .
\end{equation}
The second identity in \eqref{3.44-7-14-w} and the definition of
\(\mathcal N_T\) yield
\begin{equation}\label{eq:ucp-Rephi-zero-on-E}
	\operatorname{Re}\phi|_E=0
	\quad\text{in }L^2(E;\mathbb R).
\end{equation}
Since \(H\) has real coefficients, the first identity in
\eqref{3.44-7-14-w} implies
\[
	H(\operatorname{Re}\phi)=0 .
\]
By the elliptic unique continuation again and
\eqref{eq:ucp-Rephi-zero-on-E}, we obtain
\begin{equation*} 
	\operatorname{Re}\phi=0
	\quad\text{in }L^2(\mathbb T^d;\mathbb R).
\end{equation*}
Hence
\[
\phi=i\operatorname{Im}\phi
\quad\text{a.e. on }\mathbb T^d.
\]
Since \(\phi\in\mathcal K\), the definition
\eqref{equ-ker-ReIm} directly gives \(\phi\in\mathcal K_{\operatorname{Im}}.\)
Therefore
\begin{equation}\label{eq:ucp-NT-subset-Kim}
	\mathcal N_T\subset\mathcal K_{\operatorname{Im}}.
\end{equation}

Combining \eqref{eq:ucp-Kim-subset-NT} and
\eqref{eq:ucp-NT-subset-Kim}, we conclude that
$	\mathcal N_T=\mathcal K_{\operatorname{Im}}$.
This completes the proof.	
\end{proof}

Now we turn to prove \propref{prop-ob-M}. We note that
 \begin{equation}\label{equ-316-2}
	L^2(\mathbb T^d;\mathbb C)
	=
	\mathcal K_{\operatorname{Im}}^{\perp_{\mathbb R}}
	\oplus^{\perp_{\mathbb R}}
	\mathcal K_{\operatorname{Im}} .
\end{equation}

\begin{proof}[Proof of \propref{prop-ob-M}]
	We argue by contradiction. Suppose that the estimate is false. Then there
	exists a sequence
	\(\{u_n\}_{n\geq1}\subset \mathcal K_{\operatorname{Im}}^{\perp_{\mathbb R}}\)
	such that
	\begin{equation*}
		\|u_n\|_{L^2(\mathbb T^d)}=1,
		\qquad n\geq1,
	\end{equation*}
	and
	\begin{equation}\label{3.29-w-7-9}
		\int_0^T
		\|\operatorname{Re}e^{it(\Delta-V)}u_n\|^2_{L^2(E)}
		\,\d t\longrightarrow0,
		\qquad\text{as }n\to\infty .
	\end{equation}

	Since \(\{u_n\}_{n\geq1}\) is bounded in
	\(L^2(\mathbb T^d;\mathbb C)\), after passing to a subsequence, we may assume
	that
	\[
		u_n\rightharpoonup u
		\quad\text{weakly in }L^2(\mathbb T^d;\mathbb C),
		\qquad\text{as }n\to\infty .
	\]
	By the weak closedness of the real subspace
\(\mathcal K_{\operatorname{Im}}^{\perp_{\mathbb R}}\) of the real Hilbert
space \(L^2(\mathbb T^d;\mathbb C)\), we have
\begin{equation}\label{3.53-7-15-ww}
u\in\mathcal K_{\operatorname{Im}}^{\perp_{\mathbb R}}
\end{equation}
	Moreover, the compact embedding
	\(L^2(\mathbb T^d;\mathbb C)\hookrightarrow\mathcal H^{-2}\) implies, after
	extracting a further subsequence, that
	\begin{equation}\label{3.54-7-15-ww}
		u_n\to u
		\quad\text{strongly in }\mathcal H^{-2},
		\qquad\text{as }n\to\infty .
	\end{equation}

	We next prove that \(u\in\mathcal N_T\). Indeed, the observation map
	\[
	v\mapsto
	\operatorname{Re}(e^{it(\Delta-V)}v)|_E
	\]
	is bounded and real-linear from
	\(L^2(\mathbb T^d;\mathbb C)\) into
	\(L^2((0,T)\times E;\mathbb R)\). Therefore,
	\[
		\operatorname{Re}e^{it(\Delta-V)}u_n
		\rightharpoonup
		\operatorname{Re}e^{it(\Delta-V)}u
		\quad\text{weakly in }L^2((0,T)\times E;\mathbb R),
		\qquad\text{as }n\to\infty .
	\]
	On the other hand, by \eqref{3.29-w-7-9},
	\[
		\operatorname{Re}e^{it(\Delta-V)}u_n
		\to0
		\quad\text{strongly in }L^2((0,T)\times E;\mathbb R),
		\qquad\text{as }n\to\infty .
	\]
	Thus \( \operatorname{Re}e^{it(\Delta-V)}u=0\) in
	\(L^2((0,T)\times E;\mathbb R)\). By the definition of \(\mathcal N_T\) in \eqref{equ-NT-definition},
	we have \(u\in\mathcal N_T\). Hence, by \lemref{lem-ucp},
	\(u\in\mathcal K_{\operatorname{Im}}\). This, together with \eqref{3.53-7-15-ww}, implies
	\begin{equation}\label{3.55-7-15-wwgg}
		u\in\mathcal K_{\operatorname{Im}}
		\cap\mathcal K_{\operatorname{Im}}^{\perp_{\mathbb R}} .
	\end{equation}
	By \eqref{3.55-7-15-wwgg} and \eqref{equ-316-2}, we obtain \(u=0\).
This, along with \eqref{3.54-7-15-ww}, yields	
	\begin{equation}\label{3.50-7-12-w}
		u_n\to0
		\quad\text{strongly in }\mathcal H^{-2},
		\qquad\text{as }n\to\infty .
	\end{equation}

	Finally, applying the compact-remainder estimate of
	\propref{prop-compact-remainder}, we have
	\[
		1=\|u_n\|^2_{L^2(\mathbb T^d)}
		\leq
		C\int_0^T
		\|\operatorname{Re}e^{it(\Delta-V)}u_n\|^2_{L^2(E)}
		\,\d t
		+
		C\|u_n\|^2_{\mathcal H^{-2}}.
	\]
	The first term on the right-hand side tends to zero by 
	\eqref{3.29-w-7-9}, and the second one tends to zero by \eqref{3.50-7-12-w}.
	 This
	contradiction proves the desired estimate.
\end{proof}

\subsection[Proof of the open-set real-part estimate]
{Proof of \texorpdfstring{\thmref{thm-obRe-V}}
	{the open-set real-part estimate}}\label{subsec:proof-open-real-part-estimate}

	Let \(u_0\in L^2(\mathbb T^d;\mathbb C)\). By the real orthogonal
	decomposition \eqref{equ-316-2}, we write
	\begin{equation}\label{eq:thm-obRe-decomposition}
		u_0=u_0^\perp+u_0^{\operatorname{Im}},
	\end{equation}
	where
	\begin{equation}\label{3.54-7-15-wgs}
		u_0^\perp\in\mathcal K_{\operatorname{Im}}^{\perp_{\mathbb R}},
		\qquad
		u_0^{\operatorname{Im}}
		=
		P_{\mathcal K_{\operatorname{Im}}}u_0
		\in\mathcal K_{\operatorname{Im}} .
	\end{equation}

	By the definition of \(\mathcal K_{\operatorname{Im}}\), we have
	\[
		u_0^{\operatorname{Im}}\in\ker(-\Delta+V),
		\qquad
		\operatorname{Re}u_0^{\operatorname{Im}}=0 .
	\]
	By the spectral theorem,
	\[
		e^{it(\Delta-V)}u_0^{\operatorname{Im}}
		=
		u_0^{\operatorname{Im}} .
	\]
This, along with \eqref{eq:thm-obRe-decomposition}, yields
	\begin{equation}\label{eq:thm-obRe-observation-perp}
\operatorname{Re}e^{it(\Delta-V)}u_0
=
\operatorname{Re}e^{it(\Delta-V)}u_0^\perp
\quad\text{in }L^2((0,T)\times E;\mathbb R).
	\end{equation}

On the other hand, by \propref{prop-ob-M} and the first identity in \eqref{3.54-7-15-wgs}, we obtain
	\begin{equation}\label{eq:thm-obRe-perp-estimate}
		\|u_0^\perp\|^2_{L^2(\mathbb T^d)}
		\leq
		C\int_0^T
		\|\operatorname{Re}e^{it(\Delta-V)}u_0^\perp\|^2_{L^2(E)}
		\,\d t .
	\end{equation}
	By \eqref{eq:thm-obRe-observation-perp} and
	\eqref{eq:thm-obRe-perp-estimate}, we obtain
	\begin{equation}\label{3.55-7-15-w}
		\|u_0^\perp\|^2_{L^2(\mathbb T^d)}
		\leq
		C\int_0^T
		\|\operatorname{Re}e^{it(\Delta-V)}u_0\|^2_{L^2(E)}
		\,\d t .
	\end{equation}
Since
		\[
		\|u_0\|^2_{L^2(\mathbb T^d)}
		=
		\|u_0^\perp\|^2_{L^2(\mathbb T^d)}
		+
		\|P_{\mathcal K_{\operatorname{Im}}}u_0\|^2_{L^2(\mathbb T^d)}, 
	\]
it follows from	 \eqref{3.55-7-15-w}  that
	\[
		\|u_0\|^2_{L^2(\mathbb T^d)}
		\leq
		C\int_0^T
		\|\operatorname{Re}e^{it(\Delta-V)}u_0\|^2_{L^2(E)}
		\,\d t
		+
		\|P_{\mathcal K_{\operatorname{Im}}}u_0\|^2_{L^2(\mathbb T^d)} .
	\]
	This proves the estimate in \thmref{thm-obRe-V}.

	It remains to prove the optimality of the correction term. Let \(Y\) be a
	proper real subspace of \(\mathcal K_{\operatorname{Im}}\). Since
	\(
		Y\subsetneq\mathcal K_{\operatorname{Im}},
	\)
	there exists
	\[
		z\in\mathcal K_{\operatorname{Im}}\cap Y^{\perp_{\mathbb R}},
		\qquad z\neq0 .
	\]
	By the definition of \(\mathcal K_{\operatorname{Im}}\) in \eqref{equ-ker-ReIm} and the spectral
	theorem,
	\[
		\operatorname{Re}e^{it(\Delta-V)}z=0\quad\text{on }(0,T)\times\mathbb T^d .
			\]
Moreover, since \(z\in Y^{\perp_{\mathbb R}}\), the orthogonal projection
of \(z\) onto \(Y\) is zero.	
 Therefore, replacing the correction term by the
	projection onto \(Y\) would give a zero right-hand side for \(u_0=z\),
	whereas
	\(
		\|z\|^2_{L^2(\mathbb T^d)}>0 .
	\)
	This contradiction proves that the correction term cannot be replaced by
	the projection onto any proper real subspace of
	\(\mathcal K_{\operatorname{Im}}\).

\section{Real-part observability from admissible measurable product sets}\label{sec:measurable-real-part}
We use the notation introduced in \secref{sec:introduction}. In particular,
the spaces \(\mathcal K_c\), \((\mathcal K_c)_{\operatorname{Im}}\), the
projection \(P_{(\mathcal K_c)_{\operatorname{Im}}}\), and the time slab \(Q\)
are given by
\eqref{1.8-7-14-w}, \eqref{1.9-7-14-w},
\eqref{eq:P-KcIm-definition}, and \eqref{eq:time-slab-Q}, respectively.
The admissible measurable product structure is described by
\eqref{1.8-7-27-w}--\eqref{1.10-7-27-w}, and the associated block variables
are defined by \eqref{eq:block-variable-definition}.

\begin{theorem}[Real-part observability from admissible measurable product sets]
	\label{thm:full}
	Let \(G\subset Q\) be an admissible measurable product set, and let
	\(c\in\mathbb R\). Then there exists a constant \(C>0\), depending only on
	\(d\), \(c\), and \(G\), such that
	\begin{equation}\label{equ-measurable-realpart}
		\|\phi\|_{L^2_x}^2
		\leq
		C\|\operatorname{Re}e^{it(\Delta+c)}\phi\|_{L^2(G)}^2
		+
		\|P_{(\mathcal K_c)_{\operatorname{Im}}}\phi\|_{L^2_x}^2,
		\qquad
		\forall \phi\in L^2(\mathbb T^d;\mathbb C).
	\end{equation}
\end{theorem}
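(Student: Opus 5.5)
We plan to prove \eqref{equ-measurable-realpart} by following the structure of \secref{sec:open-real-part}, namely a high/low frequency splitting, while replacing the two ingredients that are unavailable for rough $G$: classical observability and elliptic unique continuation. Write $\phi=\sum_n\hat\phi(n)e^{in\cdot x}$ and $\mu_n=|n|^2-c$. Then
\[
\operatorname{Re}e^{it(\Delta+c)}\phi=\frac12\sum_{n}\Bigl(\hat\phi(n)e^{-it\mu_n}+\overline{\hat\phi(-n)}e^{it\mu_n}\Bigr)e^{in\cdot x},
\]
so the observed function is a trigonometric series in $(t,x)$ whose frequencies lie in the double spectrum $\Sigma_c$ of \eqref{1.18-7-28-w}, with coefficients $a_n=\hat\phi(n)$ on the branch $(-\mu_n,n)$ and $b_n=\overline{\hat\phi(-n)}$ on the branch $(\mu_n,n)$. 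In particular $\sum(|a_n|^2+|b_n|^2)=2\|\phi\|^2$. The two branches intersect only where $\mu_n=0$, that is, for $n\in S$. There the coefficient is $\tfrac12(\hat\phi(n)+\overline{\hat\phi(-n)})$, and this vanishes exactly when $\Pi_S\phi\in(\mathcal K_c)_{\operatorname{Im}}$. So the target is an Ingham/Kahane-type inequality on $G$ for functions whose frequencies lie in $\Sigma_c$, modulo the stationary coincidence.

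\emph{Step 1 (high frequencies).} For $(t,x)$-frequencies in $\{(\pm\mu_n,n)\}$ we use the Burq--Zhu complex observability from admissible measurable product sets and their auxiliary estimates \cite{BurqZhu-spacetime-meas,BurqZhuResolvent2025}, applied to each branch. The time-reversed branch $e^{-it(\Delta+c)}\bar\phi$ is handled by the reflection $t\mapsto 2\pi-t$, which keeps the product structure. Expanding $|\tfrac12(F_++F_-)|^2$ on $G$ as in \eqref{equ-312-3}, we need the cross term $\operatorname{Re}\int_G F_+\overline{F_-}$ to be small compared with $\|\phi\|^2$ when $\phi$ is supported in $|n|>N$. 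This cross term is $\sum\widehat{\mathbf 1_G}(\text{difference of frequencies})\,a_n\overline{b_m}$. The relevant frequency differences are $(-\mu_n-\mu_m,n-m)$, which are large in time when $n$ and $m$ are large. Since $\mathbf 1_G$ is only measurable, $\widehat{\mathbf 1_G}$ does not decay. We therefore approximate $\mathbf 1_G$ in $L^2$ (or $L^1$) by a smooth product $\rho\le\mathbf 1_G$ of positive mass: take $\rho$ a product of smooth cutoffs with $\rho\ge\tfrac12$ on a large-measure subset, or better, use a lower bound $\mathbf 1_G\ge\mathbf 1_{G'}$ in a way that still admits observability. We then apply the decay argument of \lemref{lem-high-V} with Weyl-type lattice counting. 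We expect the approximation error to be controlled by a Strichartz/$L^4$-type bound for toral Schr\"odinger (Bourgain), or more precisely by the Burq--Zhu auxiliary estimate that controls $\|F\|_{L^2(G\triangle G')}$ in terms of $|G\triangle G'|^{\theta}\|\phi\|$. This gives $\|\phi\|^2\le C\|\operatorname{Re}e^{it(\Delta+c)}\phi\|^2_{L^2(G)}$ for $\phi$ with $\hat\phi$ supported in $|n|>N$.

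\emph{Step 2 (low frequencies and gluing).} For $|n|\le N$ the space is finite-dimensional, so we need two things. The first is uniqueness: if $\operatorname{Re}e^{it(\Delta+c)}\phi=0$ on $G$ with $\phi$ a finite sum, then the observed function is a trigonometric polynomial in $(t,x)$ vanishing on a set of positive measure. Analyticity forces it to vanish identically on $Q$, and distinctness of the frequencies in $\Sigma_c$ off $S$ gives $a_n=b_n=0$ for $n\notin S$ and $\hat\phi(n)+\overline{\hat\phi(-n)}=0$ for $n\in S$, i.e.\ $\phi\in(\mathcal K_c)_{\operatorname{Im}}$. The second is gluing high and low frequencies. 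We cannot use unique continuation with mixed frequencies, so instead we run a compactness--uniqueness argument as in \propref{prop-ob-M}. The needed "observability up to compact remainder" comes from Step 1 plus the triangle inequality, as in \propref{prop-compact-remainder}. The uniqueness for general $L^2$ data in the kernel is handled via translation differences: if $\phi$ is in the kernel, then the finite differences $\phi-\phi(\cdot+h)$ in time, for product sets shifted inside $G$, and the shrinkage of the kernel yield finite dimensionality and then a trigonometric polynomial. This is where product stability of $G$ (under small translations, the intersection $G\cap(G-h)$ remains an admissible product set of positive measure) is used.

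The main obstacle is the cross-term control in Step 1 for rough $G$. There the decay of $\widehat{\chi_T}$ used in \lemref{lem-high-V} is unavailable, and the coupling frequencies $(\mu_n+\mu_m,n-m)$ must be separated from zero using only the lattice dispersion relation together with the Burq--Zhu inputs. We would first try to bound the cross term by the Burq--Zhu resolvent-type estimate, which gives a smallness factor for spectrally separated pieces.
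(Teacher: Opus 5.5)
Your overall architecture (a high-frequency estimate from Burq--Zhu with cross-term control, then compactness--uniqueness gluing) is sensible. However, the step you flag as the main obstacle is left open, and the concrete mechanism you sketch for it fails.

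\textbf{The cross-term step (Step 1).} You propose replacing $\mathbf 1_G$ by a smooth product $\rho\le\mathbf 1_G$ of positive mass, or by $\mathbf 1_{G'}$ with $G'\subset G$ regular, so that the decay argument of \lemref{lem-high-V} applies. For admissible measurable product sets this is impossible in general. If some factor $G_j$ is a fat Cantor set, its complement is open and dense, so any continuous $\rho_j\ge 0$ with $\rho_j\le\mathbf 1_{G_j}$ a.e.\ vanishes identically, and $G_j$ contains no box.

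The missing idea is to approximate $\chi=\mathbf 1_G$ \emph{two-sidedly}, in the multiplier norm on Schr\"odinger solutions. The error is controlled by the mixed-norm bound of \propref{prop:external-inputs}(ii), which you mention but pair with a one-sided minorant. Concretely, take real trigonometric polynomials $0\le\zeta_j\le 1$ (e.g.\ Fej\'er means) with $\|\mathbf 1_{G_j}-\zeta_j\|_{L^2(Q_{\alpha_j})}$ small, and set $\zeta=\prod_j\zeta_j$. Then $|\chi-\zeta|\le\sum_j|\mathbf 1_{G_j}-\zeta_j|$, and Cauchy--Schwarz in $z_{\alpha_j}$ gives $\|(\chi-\zeta)e^{it\Delta}f\|_{L^2(Q)}^2\le C_Z^2\sum_j\|\mathbf 1_{G_j}-\zeta_j\|_{L^2}\|f\|^2$. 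Since $\zeta$ is real, conjugation gives the same bound on the backward branch; this is the content of \propref{prop:external-inputs}(iii).

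Once $\chi$ is replaced by $\zeta$, no resolvent estimate or Weyl counting is needed. The per-branch lower bounds $\|\zeta u_\pm\|\ge (C_{\rm obs}^{-1}-\varepsilon)\|u_\pm\|$ follow from Burq--Zhu observability plus the multiplier bound. Because $\zeta$ has finite Fourier support, $\langle \zeta u_+,\zeta u_-\rangle_{L^2(Q)}$ pairs $n$ only with $n+h$ for finitely many $h$. The time phase $q_0-r_0-\mu_n-\mu_{n+h}$ has size $\gtrsim N^2$, so the cross term is $O(N^{-2})\|u_+\|\|u_-\|$. Without this ingredient Step 1 is not established, and your Step 2 rests on the compact-remainder estimate it produces.

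\textbf{The gluing step (Step 2).} This is a genuinely different route from the paper's and can be completed, but the mechanism must be stated correctly.

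Finite dimensionality of $\mathcal N=\{\phi:\operatorname{Re}e^{it(\Delta+c)}\phi=0 \text{ a.e.\ on } G\}$ needs no translations. With $\Pi_{\le N}$ the Fourier projection onto $|n|\le N$, the compact-remainder estimate $\|\phi\|^2\le C\|\operatorname{Re}e^{it(\Delta+c)}\phi\|_{L^2(G)}^2+C\|\Pi_{\le N}\phi\|^2$ makes $\Pi_{\le N}$ injective on $\mathcal N$.

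Translations are needed to show that $\mathcal N$ consists of trigonometric polynomials. For this you must prove that $\mathcal N$ is invariant under $i(\Delta+c)$, using difference quotients $\varepsilon^{-1}(e^{i\varepsilon(\Delta+c)}\phi-\phi)$. These vanish only on $G_\varepsilon=G\cap(G-\varepsilon e_0)$, with $e_0$ the time direction. Positive measure of $G_\varepsilon$ is not enough: you need the compact-remainder estimate on $G_\varepsilon$ with constants uniform in $\varepsilon$. That is what \lemref{lem:product-stability} gives on $\mathcal X_{>N_0}$ via the mixed-norm bounds, and $|G\setminus G_\varepsilon|\to 0$ then lets you pass to the limit. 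It follows that $\mathcal N+i\mathcal N$ is a finite-dimensional space invariant under $-\Delta-c$, hence spanned by Fourier eigenmodes, and your analyticity argument gives $\mathcal N=(\mathcal K_c)_{\operatorname{Im}}$.

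The paper avoids this uniqueness step. It proves observability on the whole complex double-spectrum space $\mathcal X$ by adjoining the finitely many low frequencies one at a time. Each new frequency $\lambda$ is killed with $T_\xi-e^{-i\lambda\cdot\xi}$ for a small non-resonant spacetime translation $\xi$, and product stability is applied on $\prod_j K_j\cap(K_j+\xi_{\alpha_j})$. The real-part inequality then follows from the block Riesz bounds and the exact stationary identity.

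Your route mirrors \secref{sec:open-real-part} and uses only time translations. The paper's route yields the stronger estimate on all of $\mathcal X$.
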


\begin{corollary}[Real-part observability without correction]
	\label{cor:real-part-iff}
	Let \(G\subset Q\) be an admissible measurable product set, and let
	\(c\in\mathbb R\). Then
	\[
	\|\phi\|_{L^2_x}^2
	\leq
		C\|\operatorname{Re}e^{it(\Delta+c)}\phi\|_{L^2(G)}^2,
		\qquad
		\forall \phi\in L^2(\mathbb T^d;\mathbb C),
	\]
	holds for some constant \(C>0\) if and only if
	\[
		c\notin\{|n|^2:n\in\mathbb Z^d\}.
	\]
\end{corollary}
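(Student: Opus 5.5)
The corollary follows from \thmref{thm:full} together with an explicit counterexample; no new analysis is needed. I will prove the two directions separately.

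\emph{Sufficiency.} Assume \(c\notin\{|n|^2:n\in\mathbb Z^d\}\). Then the stationary set \(S\) in \eqref{1.7-7-14-w} is empty, so \(\mathcal K_c=\{0\}\), hence \((\mathcal K_c)_{\operatorname{Im}}=\{0\}\) and \(P_{(\mathcal K_c)_{\operatorname{Im}}}\phi=0\) for every \(\phi\). The correction term in \eqref{equ-measurable-realpart} vanishes, and \thmref{thm:full} gives the uncorrected estimate directly, with the same constant \(C=C(d,c,G)\).

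\emph{Necessity.} Suppose \(c=|n_0|^2\) for some \(n_0\in\mathbb Z^d\). I exhibit a nonzero \(\phi\) whose real-part observation vanishes identically. Take
\[
\phi(x)=i\cos\langle n_0,x\rangle .
\]
When \(n_0=0\) this is \(\phi\equiv i\). In both cases \(\phi\) lies in \((\mathcal K_c)_{\operatorname{Im}}\), as already noted at the end of \subsecref{subsec:measurable-reduction}. Indeed, \(\cos\langle n_0,x\rangle=\tfrac12(e^{i\langle n_0,x\rangle}+e^{-i\langle n_0,x\rangle})\) is a real element of \(\mathcal K_c\), since \(\pm n_0\in S\), and \(\phi=i\operatorname{Im}(i\cos\langle n_0,x\rangle)\).

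Since \((-\Delta-c)\phi=0\), the spectral theorem gives \(e^{it(\Delta+c)}\phi=\phi\) for all \(t\). Because \(\phi\) is purely imaginary,
\[
\operatorname{Re}e^{it(\Delta+c)}\phi=\operatorname{Re}\phi=0 \quad\text{on } Q,
\]
and in particular on \(G\). The right-hand side of the proposed inequality is therefore zero, while \(\|\phi\|_{L^2_x}>0\). So no constant \(C\) can work.

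The only point needing care is that \(\phi\) is a genuine nonzero element of \(L^2\) whose observation vanishes on all of \(Q\), independently of \(G\). This is immediate from the stationarity computation above, so there is no real obstacle.
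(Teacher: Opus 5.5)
Your proposal is correct and follows the paper's proof essentially verbatim. Sufficiency comes from the fact that \(\mathcal K_c=\{0\}\) removes the correction term in \thmref{thm:full}. Necessity uses the stationary purely imaginary datum \(\phi=i\cos(n_0\cdot x)\), whose real-part observation vanishes identically on \(Q\).
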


\subsection{Double-spectrum reduction and external observability inputs}
\label{subsec:double-spectrum-reduction}

The purpose of this subsection is to reduce the proof of
\thmref{thm:full} to a full-frequency observability estimate on the
double-spectrum space associated with the real-part observation. Since taking
the real part introduces the two time-frequency branches associated with the
shifted dispersion relation, the real-part observation is not a single
Schr\"odinger spectral branch, but involves the double spectrum
\(\Sigma_c\) given by \eqref{1.18-7-28-w}. We first introduce the corresponding
double-spectrum space and then establish the full-frequency observability
estimate on this space by combining a high-frequency estimate with a
low-frequency absorption argument.

Although the time interval has length \(2\pi\), we do not identify the time
variable periodically in the shifted case. Indeed, \(e^{it\Delta}\) is
\(2\pi\)-periodic in time, whereas
\(e^{it(\Delta+c)}=e^{ict}e^{it\Delta}\) is \(2\pi\)-periodic only when
\(c\in\mathbb Z\). Throughout this section, the time variable is therefore
treated as a variable on the interval \((0,2\pi)\), and the spacetime domain is
the slab \(Q\) defined by \eqref{eq:time-slab-Q}.

We use the block notation introduced in \secref{sec:introduction}, where
\(z_{\alpha_j}\) is defined by
\eqref{eq:block-variable-definition}. For each \(j\), we also use the notation
\begin{equation}\label{4.2-4-30-w}
	z_{\widehat{\alpha_j}}:=(z_\ell)_{\ell\notin\alpha_j},
	\qquad
	z=(z_{\alpha_j},z_{\widehat{\alpha_j}}).
\end{equation}
The two-dimensional block estimates are the main input, while the
one-dimensional block estimates follow from the two-dimensional ones by
adjoining an auxiliary coordinate.

For \(\phi\in L^2(\mathbb T^d;\mathbb C)\), we have
\begin{equation}\label{eq:shifted-free-expansion}
	e^{it(\Delta+c)}\phi(x)
	=
	\sum_{n\in\mathbb Z^d}
	\widehat\phi(n)e^{i(n\cdot x-\mu_n t)},
	\qquad
	\mu_n=|n|^2-c ,
\end{equation}
where the series converges in \(L^2(Q)\). It follows from
\eqref{eq:shifted-free-expansion} that the real-part observation belongs to
the double-spectrum space
\begin{equation}\label{eq:double-spectrum-space}
	\mathcal X
	=
	\overline{\operatorname{span}}^{\,L^2(Q)}
	\left\{
	e^{i(n\cdot x-\mu_n t)},
	e^{i(n\cdot x+\mu_n t)}
	:\ n\in\mathbb Z^d
	\right\}.
\end{equation}
When \(\mu_n=0\), the two generators coincide and are counted only once.

The following theorem gives the main observability input for the proof of
\thmref{thm:full}.

\begin{theorem}[Full-frequency double-spectrum inequality]
	\label{thm:double-spectrum-obs}
	Let  \(\mathcal X\) be defined by \eqref{eq:double-spectrum-space}.
There exists a constant \(C_G>0\), depending only on \(d\), \(c\), and
	\(G\), such that
	\begin{equation}\label{eq:full-frequency-X}
		\|F\|_{L^2(Q)}^2
		\leq
		C_G\|F\|_{L^2(G)}^2,
		\qquad
		\forall F\in\mathcal X .
	\end{equation}

	In particular, for every
	\(\phi\in L^2(\mathbb T^d;\mathbb C)\),
	\begin{equation}\label{eq:real-part-QG}
		\|\operatorname{Re} e^{it(\Delta+c)}\phi\|_{L^2(Q)}^2
		\leq
		C_G
		\|\operatorname{Re} e^{it(\Delta+c)}\phi\|_{L^2(G)}^2 .
	\end{equation}
\end{theorem}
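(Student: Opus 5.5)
The plan is to prove \eqref{eq:full-frequency-X} in two stages: a high-frequency estimate, followed by absorption of the finitely many remaining low-frequency directions. The ``in particular'' statement \eqref{eq:real-part-QG} is then immediate. Indeed, by \eqref{eq:shifted-free-expansion},
\[
\operatorname{Re}e^{it(\Delta+c)}\phi=\tfrac12\sum_n\bigl(\widehat\phi(n)e^{i(n\cdot x-\mu_nt)}+\overline{\widehat\phi(n)}e^{-i(n\cdot x-\mu_n t)}\bigr),
\]
and the second family is $e^{i(m\cdot x+\mu_m t)}$ with $m=-n$, since $\mu_{-n}=\mu_n$. Hence the real part lies in $\mathcal X$.

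\emph{Step 1: high frequencies.} Write $F=F_++F_-$ with $F_\pm\in\overline{\operatorname{span}}\{e^{i(n\cdot x\mp\mu_nt)}\}$. The complex-valued observability of Burq--Zhu from admissible measurable product sets gives, for each branch separately, $\|F_\pm\|_{L^2(Q)}^2\le C\|F_\pm\|_{L^2(G)}^2$. The branch $F_-$ is the conjugate of a forward Schr\"odinger solution in the variables $(t,-x)$, and the product structure of $G$ is preserved under this reflection up to relabeling, which handles $F_-$. Expanding $\|F\|_{L^2(G)}^2$ produces the cross term $2\operatorname{Re}\int_G F_+\overline{F_-}$. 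Its Fourier coefficients involve $\widehat{\mathbf 1_G}(\mu_n+\mu_m,\,n-m)$, that is, time-frequencies of size $|n|^2+|m|^2-2c$, which are large at high frequency.

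Since $\mathbf 1_G$ is only measurable, I would approximate it by a smooth or trigonometric-polynomial function $g$ with $\|\mathbf 1_G-g\|_{L^1(Q)}$ small. For the error term I would use the Burq--Zhu auxiliary estimates, namely $L^4$-type (Strichartz/Bourgain) bounds on each branch on $Q$, giving $\bigl|\int(\mathbf 1_G-g)F_+\overline{F_-}\bigr|\le\|\mathbf 1_G-g\|_{L^2}\|F_+\|_{L^4}\|F_-\|_{L^4}$. The main term $\int gF_+\overline{F_-}$ has coefficients $\widehat g(\mu_n+\mu_m,n-m)$, and these vanish (or decay rapidly) once $|n|^2+|m|^2$ exceeds the time-bandwidth of $g$. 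Both terms are therefore small relative to $\|F\|^2$ for $F$ supported on $|n|\ge N$ with $N$ large. This gives $\|F\|_{L^2(Q)}^2\le C\|F\|_{L^2(G)}^2$ on the high-frequency subspace $\mathcal X_{>N}$.

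\emph{Step 2: low frequencies.} $\mathcal X=\mathcal X_{\le N}\oplus\mathcal X_{>N}$, and $\mathcal X_{\le N}$ is finite-dimensional. By the standard absorption argument, it suffices to show two things: uniqueness, namely that $F\in\mathcal X$ with $F=0$ on $G$ forces $F=0$; and that the high-frequency estimate can be combined with the low-frequency part. For the combination I would argue by contradiction with normalized sequences: weak limits, the compactness of $\mathcal X_{\le N}$, and the fact that high-frequency pieces converging weakly to zero decouple, in $L^2(G)$, from fixed low-frequency functions.

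Uniqueness is the main obstacle, and I would handle it by the translation-difference argument the introduction alludes to. Suppose $F\in\mathcal X$ vanishes on $G=\prod G_j$. Each element of $\mathcal X$ is real-analytic in each coordinate block once the frequency is finite, but this fails in general. So I would first use the high-frequency estimate and an induction on frequency shells. For a block $\alpha_j$ and a translation $h$ in the $z_{\alpha_j}$ variables, the difference $F(\cdot+h)-e^{i\theta}F$ removes selected exponentials while staying in $\mathcal X$, and it vanishes on $G\cap(G-h)$, which still has positive measure for small $h$ (by Lebesgue density, in the ``product stability'' form). Iterating such differences isolates a single exponential pair, which vanishes on a positive-measure product set and is therefore zero. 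This determines that $F$ has no low modes. The high-frequency remainder then vanishes by Step 1.

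One must check that the two generators sharing the same $(n,\pm\mu_n)$ are distinguished; degeneracy $\mu_n=\mu_m$ with $n\neq m$ is handled through the $x$-frequency. This is the step I expect to require the most care.
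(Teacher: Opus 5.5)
Your overall architecture matches the paper's: branchwise Burq--Zhu observability with a small cross term at high frequency, then translation differences for the finitely many low modes. However, two steps fail as written.

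\textbf{Step 1: the cross-term error.} You bound the error by $\|\mathbf 1_G-g\|_{L^2}\|F_+\|_{L^4(Q)}\|F_-\|_{L^4(Q)}$, which requires $\|e^{it\Delta}f\|_{L^4(Q)}\lesssim\|f\|_{L^2}$. That is Zygmund's estimate for $d=1$, but it is false for $d\ge2$: there is no lossless $L^4_{t,x}$ Strichartz bound on $\mathbb T^2$, and for $d\ge3$ a Knapp-type example forces a loss $N^{(d-2)/4}$. What Burq--Zhu actually provide is the two-variable mixed norm $L^4_{z_{\alpha_j}}L^2_{z_{\widehat{\alpha_j}}}$. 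H\"older against these norms requires the weight to be small in $L^2_{z_{\alpha_j}}L^\infty_{z_{\widehat{\alpha_j}}}$, and that fails for $\mathbf 1_G-g$ with a general smooth approximant $g$. The repair is to approximate factorwise: take $g=\prod_j g_j(z_{\alpha_j})$ with $0\le g_j\le1$ (e.g.\ Fej\'er means of $\mathbf 1_{G_j}$) and telescope $\mathbf 1_G-g=\sum_j\bigl(\prod_{i<j}g_i\bigr)(\mathbf 1_{G_j}-g_j)\prod_{i>j}\mathbf 1_{G_i}$. Each term is then bounded by $\|\mathbf 1_{G_j}-g_j\|_{L^2(Q_{\alpha_j})}$ times the two mixed norms. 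In substance this is the multiplier-norm approximation $\|(\mathbf 1_G-\zeta)e^{it\Delta}f\|_{L^2(Q)}\le\varepsilon\|e^{it\Delta}f\|_{L^2(Q)}$ that the paper imports from Burq--Zhu, and it is exactly where $\#\alpha_j\le2$ is used. With that input, your direct splitting of $\int_G F_+\overline{F_-}$ works, and it is slightly simpler than the paper's passage through $\zeta F$.

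\textbf{Step 2: uniqueness.} Finitely many differences $F(\cdot+h)-e^{i\theta}F$ remove finitely many exponentials. They only multiply the coefficients of the infinitely many modes of $\mathcal X_{>N}$ by factors $e^{-i\eta\cdot h}-e^{i\theta}$, so they never isolate a single exponential pair. What you actually get is a high-frequency tail, possibly plus one low mode, vanishing on the shrunken product set $G\cap(G-h)$. Positive measure of that set does not let you conclude. Step~1 applied to the new set yields a threshold $N_0$ that depends on the set, through the approximant of its indicator, so it need not cover the tail you are left with, and shrinking again leads to a regress.

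The missing idea is a set-uniform stability statement. If a closed space satisfies observability on $G$ and the mixed-norm bounds $\|f\|_{L^4_{z_{\alpha_j}}L^2_{z_{\widehat{\alpha_j}}}}\le K\|f\|_{L^2(Q)}$, then observability with constant $\gamma/2$ persists on every $\prod_j I_j\subset G$ with $|G_j\setminus I_j|\le\delta(\gamma,K,G)$; this follows from Cauchy--Schwarz in $z_{\alpha_j}$. With this in hand, pick a small translation $\xi$ off the null set where $e^{-i\eta\cdot\xi}=e^{-i\lambda\cdot\xi}$ for some $\eta$. The difference then lands in the already observable space and vanishes on such an $I$, hence vanishes identically, and non-resonance kills every remaining coefficient. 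This is the paper's one-frequency absorption. It also works in your compactness--uniqueness order: annihilate all low modes at once, conclude the tail is zero, then the remaining finite exponential sum vanishes on $G$ and is zero by analyticity.

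You also need two further facts. First, the spectral translation must be bounded on $\mathcal X$; this rests on uniform Riesz bounds for the possibly non-orthogonal pairs $e^{\pm i\mu_nt}$ on $(0,2\pi)$. Second, the spectral translation coincides with the physical translation only on interior sets, since $t$ is not periodic unless $c\in\mathbb Z$.
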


To prove \thmref{thm:double-spectrum-obs}, we use the following
complex-valued estimates of Burq--Zhu. These results provide observability from admissible measurable product
sets, together with auxiliary estimates for rough product observation sets.
In the proof of \thmref{thm:double-spectrum-obs}, they will be combined with
the double-spectrum analysis generated by the real-part observation and with
the low-frequency absorption argument developed below. We record only the
forms needed in the sequel.

\begin{proposition}[External inputs from complex-valued observability]
	\label{prop:external-inputs}
	Let \(G\subset Q\) be an admissible measurable product set, and let
	\(\chi=\mathbf 1_G\). The following estimates hold.

	\begin{itemize}
		\item [(i)] There exists a constant \(C_{\rm obs}>0\), depending only on
		\(d\) and \(G\), such that
		\[
		\|e^{it\Delta}f\|_{L^2(Q)}
		\leq
		C_{\rm obs}
		\|e^{it\Delta}f\|_{L^2(G)},
		\qquad
		\forall f\in L^2(\mathbb T^d;\mathbb C).
		\]

		\item [(ii)] There exists a constant \(C_Z>0\), depending only on \(d\),
such that, for every block \(\alpha_j\) in \eqref{1.9-7-27-w}, one has
\[
\|e^{it\Delta}f\|_{L^4_{z_{\alpha_j}}
L^2_{z_{\widehat{\alpha_j}}}}
\leq
C_Z\|f\|_{L^2(\mathbb T^d)},
\qquad
\forall f\in L^2(\mathbb T^d;\mathbb C),
\]
where \(z_{\alpha_j}\) and \(z_{\widehat{\alpha_j}}\) are defined in
\eqref{eq:block-variable-definition} and \eqref{4.2-4-30-w}, respectively.
		
\item [(iii)] For every \(\varepsilon>0\), there exists a real-valued
		trigonometric polynomial
		\[
		\zeta(t,x)
		=
		\sum_{q\in\mathcal Q_R}
		\zeta_q e^{i(q_0t+q'\cdot x)},
		\qquad
		\mathcal Q_R\subset\mathbb Z^{1+d}\text{ finite},
		\]
		such that
		\begin{equation}\label{eq:forward-multiplier-estimate}
		\|(\chi-\zeta)e^{it\Delta}f\|_{L^2(Q)}
		\leq
		\varepsilon
		\|e^{it\Delta}f\|_{L^2(Q)},
		\qquad
		\forall f\in L^2(\mathbb T^d;\mathbb C),
		\end{equation}
		and
		\begin{equation}\label{eq:backward-multiplier-estimate}
		\|(\chi-\zeta)e^{-it\Delta}f\|_{L^2(Q)}
		\leq
		\varepsilon
		\|e^{-it\Delta}f\|_{L^2(Q)},
		\qquad
		\forall f\in L^2(\mathbb T^d;\mathbb C).
		\end{equation}
		
		Moreover, for every \(c\in\mathbb R\) and every
		\(f,g\in L^2(\mathbb T^d;\mathbb C)\), the same polynomial \(\zeta\)
		satisfies
		\begin{equation}\label{eq:shifted-double-branch-estimate}
		\begin{aligned}
		&\|(\chi-\zeta)
		(e^{ict}e^{it\Delta}f+e^{-ict}e^{-it\Delta}g)\|_{L^2(Q)}
		\\
		&\qquad\leq
		\varepsilon
		\Big(
		\|e^{ict}e^{it\Delta}f\|_{L^2(Q)}
		+
		\|e^{-ict}e^{-it\Delta}g\|_{L^2(Q)}
		\Big).
		\end{aligned}
		\end{equation}
	\end{itemize}
\end{proposition}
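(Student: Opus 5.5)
The three items are of different nature. For (i) and (ii) I would quote Burq--Zhu \cite{BurqZhu-spacetime-meas,BurqZhuResolvent2025} and only check that their hypotheses match the admissible product structure \eqref{1.9-7-27-w}--\eqref{1.10-7-27-w}. For (iii) I would build the multiplier from (ii) by a product-approximation argument, then deduce the backward and shifted two-branch versions by conjugation and the triangle inequality.

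\emph{Items (i) and (ii).} Estimate (i) is exactly Burq--Zhu's observability theorem for $e^{it\Delta}$ from measurable product sets with blocks of size at most two; I would quote it as is.

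For (ii), I would not attempt a full proof and would cite Burq--Zhu. I would still verify the easy case where $\alpha_j=\{0,\ell\}$, the time variable paired with one space variable. Expand $f$ in Fourier modes of the variables $x_k$, $k\neq\ell$. For fixed such variables, the function of $(t,x_\ell)$ is a 1D Schr\"odinger solution times unimodular phases. Since $4\ge2$, Minkowski's inequality gives
\[
\|\cdot\|_{L^4_{z_{\alpha_j}}L^2_{z_{\widehat{\alpha_j}}}}\le\|\cdot\|_{L^2_{z_{\widehat{\alpha_j}}}L^4_{z_{\alpha_j}}},
\]
and Zygmund's $L^4(\mathbb T^2)$ estimate for 1D Schr\"odinger then gives the bound. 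The genuinely hard case is a block of two spatial variables with time among the remaining variables. Here $L^4$ in two space variables cannot be obtained at fixed $t$: one must exploit the time averaging against pairs of lattice points on a common circle, which is a Bourgain-type counting argument. For this case I would rely entirely on the cited estimate; I expect it to be the main obstacle if one wanted a self-contained proof.

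\emph{Item (iii).} Write $\chi=\prod_{j=1}^m\chi_j(z_{\alpha_j})$ with $\chi_j=\mathbf 1_{G_j}$, regarding $(0,2\pi)$ as a period so that integer time frequencies are allowed. Let $\zeta_j$ be the Fej\'er means of $\chi_j$ on $Q_{\alpha_j}$. These are real trigonometric polynomials with $0\le\zeta_j\le1$ and $\zeta_j\to\chi_j$ in $L^4(Q_{\alpha_j})$. Set $\zeta=\prod_j\zeta_j$, which is real with finitely many frequencies. Telescoping,
\[
\chi-\zeta=\sum_{j=1}^m\Big(\prod_{k<j}\zeta_k\Big)(\chi_j-\zeta_j)\Big(\prod_{k>j}\chi_k\Big),
\]
and the outer factors are bounded by $1$. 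Then, with $F=e^{it\Delta}f$, H\"older in $z_{\alpha_j}$ and (ii) give
\[
\|(\chi_j-\zeta_j)F\|_{L^2(Q)}\le\|\chi_j-\zeta_j\|_{L^4(Q_{\alpha_j})}\,C_Z\|f\|_{L^2}.
\]
Finally use $\|f\|_{L^2}=(2\pi)^{-1/2}\|F\|_{L^2(Q)}$. Choosing the Fej\'er indices so that $\sum_j\|\chi_j-\zeta_j\|_{L^4}\le\varepsilon(2\pi)^{1/2}/C_Z$ proves \eqref{eq:forward-multiplier-estimate}.

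For \eqref{eq:backward-multiplier-estimate}, note that $e^{-it\Delta}f=\overline{e^{it\Delta}\bar f}$, and $\chi,\zeta$ are real. Hence the same $\zeta$ works, because $|(\chi-\zeta)\overline{G}|=|(\chi-\zeta)G|$ pointwise. For \eqref{eq:shifted-double-branch-estimate}, the factors $e^{\pm ict}$ are unimodular, so they change neither side of the two single-branch estimates. The triangle inequality then gives the claim with the same $\zeta$, uniformly in $c$.
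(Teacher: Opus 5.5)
Your proposal is correct. For (i), for (ii) with two-variable blocks, and for the backward and shifted parts of (iii), you do what the paper does. You cite Burq--Zhu, use $e^{-it\Delta}f=\overline{e^{it\Delta}\bar f}$ with $\chi-\zeta$ real, and apply the triangle inequality with the unimodular factors $e^{\pm ict}$.

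The genuine difference is the forward estimate \eqref{eq:forward-multiplier-estimate}. The paper quotes Burq--Zhu's Theorem~1.10 together with (ii) to assert that $\chi$ lies in the closure of trigonometric polynomials for the multiplier norm $\|\cdot\|_{\mathfrak M_+}$. It then takes a complex approximant $\zeta_0$ and passes to $\operatorname{Re}\zeta_0$ via $|\chi-\operatorname{Re}\zeta_0|\le|\chi-\zeta_0|$. You instead build $\zeta=\prod_j\zeta_j$ explicitly from Fej\'er means of the factors $\mathbf 1_{G_j}$, after periodizing the time interval. The telescoping identity and Cauchy--Schwarz in $z_{\alpha_j}$ then give
$\|(\chi-\zeta)e^{it\Delta}f\|_{L^2(Q)}\le C_Z(2\pi)^{-1/2}\sum_j\|\mathbf 1_{G_j}-\zeta_j\|_{L^4(Q_{\alpha_j})}\,\|e^{it\Delta}f\|_{L^2(Q)}$.
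Every step checks out: Fej\'er kernels are nonnegative, so $0\le\zeta_j\le1$ and all outer factors in the telescoping sum are bounded by $1$. Your route is self-contained given (ii) and produces a real $\zeta$ with $0\le\zeta\le1$ directly, with no real-part step. It also makes explicit that the only deep input in (iii) is the mixed-norm bound. The paper's route is shorter, but it defers to a density statement inside the cited theorem, which is essentially your argument.

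There are two small points on (ii). First, your side check for $\alpha_j=\{0,\ell\}$ applies Minkowski in the physical variables $x'=(x_k)_{k\neq\ell}$. At a fixed $x'$, however, the function of $(t,x_\ell)$ is not a one-dimensional Schr\"odinger solution, so Zygmund's estimate cannot be applied there. Do Parseval in $x'$ first. Let $f_{n'}$ be the $n'$-th Fourier coefficient of $f$ in $x'$ and $F_{n'}=e^{it\partial_{x_\ell}^2}f_{n'}$. Then $\|e^{it\Delta}f\|_{L^4_{t,x_\ell}L^2_{x'}}\simeq\|(\sum_{n'}|F_{n'}|^2)^{1/2}\|_{L^4_{t,x_\ell}}\le(\sum_{n'}\|F_{n'}\|_{L^4_{t,x_\ell}}^2)^{1/2}$, and Zygmund's estimate applies to each $F_{n'}$. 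Second, you do not treat blocks with $\#\alpha_j=1$. The paper handles them by adjoining a second coordinate and using Cauchy--Schwarz in it, which reduces to the two-variable estimate. This step is easy, but you should state it, since the cited Burq--Zhu lemma concerns pairs of variables.
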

\begin{proof}
		(i) The estimate follows from the complex-valued observability theory of
Burq--Zhu; see Theorems~1.2 and~1.10 in
\cite{BurqZhu-spacetime-meas}.
	
(ii) We recall that \(Q_\ell\) and \(Q_{\alpha_j}\) are defined in
\secref{sec:introduction}, and that \(z_{\widehat{\alpha_j}}\) is defined by
\eqref{4.2-4-30-w}. If \(\#\alpha_j=2\), the estimate is the
Zygmund-type mixed-norm estimate of Burq--Zhu; see Lemma~2.5 of
\cite{BurqZhu-spacetime-meas}.

If \(\#\alpha_j=1\), choose
\(\ell\in\{0,\ldots,d\}\setminus\alpha_j\), set
\[
	\beta:=\alpha_j\cup\{\ell\},
	\qquad
	z_\beta:=(z_r)_{r\in\beta},
\]
and write \(w=z_\ell\). For \(u=e^{it\Delta}f\), the Cauchy--Schwarz
inequality in the \(w\)-variable gives
\[
\begin{aligned}
\|u\|_{L^4_{z_{\alpha_j}}
L^2_{z_{\widehat{\alpha_j}}}}^4
&=
\int_{Q_{\alpha_j}}
\left(
\int_{Q_\ell}
\int_{\prod_{r\notin\beta}Q_r}|u|^2
\right)^2
\,\d z_{\alpha_j}
\\
&\leq
|Q_\ell|
\int_{Q_{\alpha_j}}\int_{Q_\ell}
\left(
\int_{\prod_{r\notin\beta}Q_r}|u|^2
\right)^2
\,\d z_\ell\,\d z_{\alpha_j}
\\
&=
|Q_\ell|\,
\|u\|_{L^4_{z_\beta}L^2_{(z_r)_{r\notin\beta}}}^4 .
\end{aligned}
\]
The two-dimensional estimate applied to the coordinate pair \(z_\beta\) then
gives the desired one-dimensional estimate, after enlarging the constant.

(iii) We prove \eqref{eq:forward-multiplier-estimate},
\eqref{eq:backward-multiplier-estimate}, and
\eqref{eq:shifted-double-branch-estimate} separately.

\medskip
\noindent
\emph{Step 1. We prove the forward estimate
\eqref{eq:forward-multiplier-estimate}.}

By Theorem~1.10 of Burq--Zhu~\cite{BurqZhu-spacetime-meas}, together with
the estimate in (ii), the function \(\chi=\mathbf 1_G\) belongs to the
closure, in the multiplier norm on the forward Schr\"odinger solution space,
of trigonometric polynomials. More precisely, let \(\mathcal P_{\rm trig}\) denote the space of
trigonometric polynomials on \(Q\). For a measurable function \(m\) on
\(Q\), define its multiplier norm on the forward Schr\"odinger solution
space by
\[
\|m\|_{\mathfrak M_+}
:=
\sup_{0\neq f\in L^2(\mathbb T^d;\mathbb C)}
\frac{
	\|m e^{it\Delta}f\|_{L^2(Q)}
}{
	\|e^{it\Delta}f\|_{L^2(Q)}
}.
\]
Then
\[
	\inf_{\zeta\in\mathcal P_{\rm trig}}
	\|\chi-\zeta\|_{\mathfrak M_+}
	=
	0 .
\]
Hence, for every \(\varepsilon>0\), there exists a trigonometric polynomial
\(\zeta_0\) such that
\begin{equation}\label{eq:forward-complex-polynomial}
	\|(\chi-\zeta_0)e^{it\Delta}f\|_{L^2(Q)}
	\le
	\varepsilon
	\|e^{it\Delta}f\|_{L^2(Q)},
	\qquad
	\forall f\in L^2(\mathbb T^d;\mathbb C).
\end{equation}
Since \(\chi\) is real-valued, replacing \(\zeta_0\) by
\(\zeta=\operatorname{Re}\zeta_0\) does not increase the multiplier error.
Indeed,
\[
|\chi-\operatorname{Re}\zeta_0|
=
|\operatorname{Re}(\chi-\zeta_0)|
\le
|\chi-\zeta_0|
\quad\text{a.e. on }Q .
\]
Thus \eqref{eq:forward-complex-polynomial} implies
\eqref{eq:forward-multiplier-estimate} with a real-valued trigonometric
polynomial \(\zeta\).

\medskip
\noindent
\emph{Step 2. We prove 
\eqref{eq:backward-multiplier-estimate}.}

We use the same real-valued trigonometric polynomial \(\zeta\) obtained in
Step~1. Since
\[
	e^{-it\Delta}f
	=
	\overline{e^{it\Delta}\overline f},
\]
and \(\chi-\zeta\) is real-valued, we have
\[
\begin{aligned}
	\|(\chi-\zeta)e^{-it\Delta}f\|_{L^2(Q)}
	=
	\|(\chi-\zeta)e^{it\Delta}\overline f\|_{L^2(Q)}
	\leq
	\varepsilon
	\|e^{-it\Delta}f\|_{L^2(Q)},\;\;f\in L^2(\mathbb T^d;\mathbb C).
\end{aligned}
\]
 This proves
\eqref{eq:backward-multiplier-estimate}.

\medskip
\noindent
\emph{Step 3. We prove \eqref{eq:shifted-double-branch-estimate}.}

Fix \(c\in\mathbb R\) and
\(f,g\in L^2(\mathbb T^d;\mathbb C)\).
 Since \(e^{\pm ict}\) are
unimodular and commute with multiplication by \(\chi-\zeta\), the triangle
inequality, \eqref{eq:forward-multiplier-estimate}, and
\eqref{eq:backward-multiplier-estimate} give
\[
\begin{aligned}
&\|(\chi-\zeta)
(e^{ict}e^{it\Delta}f+e^{-ict}e^{-it\Delta}g)\|_{L^2(Q)}
\\
&\qquad\le
\|(\chi-\zeta)e^{ict}e^{it\Delta}f\|_{L^2(Q)}
+
\|(\chi-\zeta)e^{-ict}e^{-it\Delta}g\|_{L^2(Q)}
\\
&\qquad\le
\varepsilon
\Big(
\|e^{ict}e^{it\Delta}f\|_{L^2(Q)}
+
\|e^{-ict}e^{-it\Delta}g\|_{L^2(Q)}
\Big).
\end{aligned}
\]
This proves \eqref{eq:shifted-double-branch-estimate}, and completes the proof.	
\end{proof}

\subsection{High-frequency observability}
\label{subsec:measurable-high-frequency}
We first isolate the high-frequency part of the double-spectrum space. For
\(N>0\), let \(\mathcal X_{>N}\) be the \(L^2(Q)\)-closure of all functions
of the form
\[
	F=u_+ + u_-,
\]
where
\[
	u_+(t,x)=e^{ict}e^{it\Delta}\phi(x),
	\qquad
	u_-(t,x)=e^{-ict}e^{-it\Delta}\psi(x),
\]
with
\[
	\phi,\psi\in L^2(\mathbb T^d;\mathbb C),
	\qquad
	\operatorname{supp}\widehat\phi,\operatorname{supp}\widehat\psi
	\subset
	\{n\in\mathbb Z^d:\ |n|>N\}.
\]
Equivalently, \(\mathcal X_{>N}\) is the \(L^2(Q)\)-closure of finite sums
\[
F(t,x)
=
\sum_{|n|>N}a_n^+e^{i(n\cdot x-\mu_n t)}
+
\sum_{|n|>N}a_n^-e^{i(n\cdot x+\mu_n t)},
\]
where only finitely many coefficients \(a_n^\pm\) are nonzero, and
\(\mu_n=|n|^2-c\) as in \eqref{eq:shifted-free-expansion}.
\begin{lemma}[High-frequency observability]
	\label{lem:high-frequency}
There exist constants \(N_0>0\) and \(\gamma_0>0\), depending only on
	\(d\), \(c\), and \(G\), such that
	\begin{equation}\label{4.10-7-28-wgs}
	\|F\|_{L^2(G)}^2
	\ge
	\gamma_0\|F\|_{L^2(Q)}^2,
	\qquad
	\forall F\in \mathcal X_{>N_0}.
	\end{equation}
\end{lemma}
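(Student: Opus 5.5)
Work with finite sums $F=u_++u_-$ with $u_+=e^{ict}e^{it\Delta}\phi$, $u_-=e^{-ict}e^{-it\Delta}\psi$, $\widehat\phi,\widehat\psi$ supported in $\{|n|>N\}$; the general case follows by density, since both sides of \eqref{4.10-7-28-wgs} are continuous in $L^2(Q)$. Expand
\[
\|F\|_{L^2(G)}^2=\|u_+\|_{L^2(G)}^2+\|u_-\|_{L^2(G)}^2+2\operatorname{Re}\int_Q\chi\,u_+\overline{u_-}\,\d t\,\d x .
\]
By \propref{prop:external-inputs}(i), applied to $\phi$ and to $\overline\psi$ (note $|u_-|=|e^{it\Delta}\overline\psi|$ and $|e^{ict}|=1$), the diagonal terms are bounded below by $C_{\rm obs}^{-2}(\|u_+\|_{L^2(Q)}^2+\|u_-\|_{L^2(Q)}^2)$. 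It therefore suffices to show that the cross term is at most $\tfrac12 C_{\rm obs}^{-2}(\|u_+\|_{L^2(Q)}^2+\|u_-\|_{L^2(Q)}^2)$ once $N$ is large. The reverse bound $\|F\|_{L^2(Q)}^2\le 2(\|u_+\|_{L^2(Q)}^2+\|u_-\|_{L^2(Q)}^2)$ then gives \eqref{4.10-7-28-wgs} with $\gamma_0=\tfrac14 C_{\rm obs}^{-2}$.

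To treat the cross term, fix $\varepsilon>0$ small (depending only on $C_{\rm obs}$) and take the real trigonometric polynomial $\zeta$ from \propref{prop:external-inputs}(iii), with frequencies in a finite set $\mathcal Q_R$. Split $\chi=\zeta+(\chi-\zeta)$. By Cauchy--Schwarz and \eqref{eq:forward-multiplier-estimate},
\[
\Bigl|\int_Q(\chi-\zeta)u_+\overline{u_-}\Bigr|\le\varepsilon\|u_+\|_{L^2(Q)}\|u_-\|_{L^2(Q)},
\]
which is absorbed. For the $\zeta$-part, expand $\int_Q\zeta\,u_+\overline{u_-}$ in Fourier modes. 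A term pairing $n$ (from $u_+$) and $m$ (from $u_-$) with $q=(q_0,q')\in\mathcal Q_R$ requires $q'+n-m=0$, so $|n-m|\le R$. The time integral over $(0,2\pi)$ is
\[
\int_0^{2\pi}e^{i(q_0-\mu_n-\mu_m)t}\,\d t,
\]
whose modulus is $\lesssim(1+|q_0-\mu_n-\mu_m|)^{-1}$ (the exponent need not be an integer since $c\notin\Z$ in general, so this is decay, not exact vanishing). For $|n|,|m|>N$ we have $\mu_n+\mu_m\ge 2N^2-2|c|$, so the denominator is $\gtrsim N^2$ for $N$ large relative to $R$ and $|c|$. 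Each $n$ pairs with at most $\#\{m:|m-n|\le R\}\lesssim R^d$ indices $m$, so Schur's test bounds the $\zeta$-part by $C(\zeta)N^{-2}\|\widehat\phi\|_{\ell^2}\|\widehat\psi\|_{\ell^2}$, with $\|u_\pm\|_{L^2(Q)}^2=2\pi\|\cdot\|_{\ell^2}^2$. Choosing $N_0$ large in terms of $\zeta$, hence of $\varepsilon$, $d$, $c$, $G$, makes this also absorbable.

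The main obstacle is the ordering of choices: $\varepsilon$ must be fixed first from $C_{\rm obs}$, then $\zeta$ (which may have very large frequencies $R$ and coefficients), and only then $N_0$. This ordering is consistent because the Schur bound depends on $\zeta$ only through $R$ and $\sum_q|\zeta_q|$. The essential point is that the forward and backward branches are separated in time frequency by about $2N^2$, while the smooth approximant $\zeta$ cannot bridge that gap, and the rough remainder $\chi-\zeta$ is controlled uniformly by (iii).
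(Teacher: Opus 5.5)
Your argument is correct. It rests on the same mechanism as the paper's proof: replace $\chi=\mathbf 1_G$ by the trigonometric polynomial $\zeta$ of \propref{prop:external-inputs}(iii), and use that the forward and backward branches are separated by roughly $2N^2$ in time frequency, a gap that the finitely many frequencies of $\zeta$ cannot bridge. The organization differs, though.

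The paper first proves a lower bound for $\|\zeta F\|_{L^2(Q)}$. It bounds $\|\zeta u_\pm\|_{L^2(Q)}$ from below via (i) and the multiplier estimates, losing $\varepsilon$. It then controls $\langle\zeta u_+,\zeta u_-\rangle_{L^2(Q)}$ as a double sum over $q,r\in\mathcal Q_R$. Finally it transfers back to $\chi F$ with the double-branch estimate \eqref{eq:shifted-double-branch-estimate}, which forces the bookkeeping $3\varepsilon<C_{\rm obs}^{-1}$.

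You instead expand $\|F\|_{L^2(G)}^2=\int_Q\chi|F|^2$ directly, using $\chi^2=\chi$. The diagonal terms then follow from (i) with no loss. Only the cross term $\int_Q\chi\,u_+\overline{u_-}$ requires approximation, and it carries a single factor of $\zeta$. The remainder is handled by Cauchy--Schwarz and the forward estimate \eqref{eq:forward-multiplier-estimate} alone. Your frequency bound is also a bit simpler: $\mu_n+\mu_m>2N^2-2c$ for $|n|,|m|>N$ directly, without parametrizing $m=n+h$.

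What your route gains is a shorter proof, fewer external inputs, and the explicit constant $\gamma_0=\tfrac14C_{\rm obs}^{-2}$. What the paper's route gains is that it treats $\mathbf 1_G$ purely as a multiplier and never uses idempotence. It therefore carries over unchanged to lower bounds for $\|mF\|_{L^2(Q)}$ with a general multiplier $m$ that can be approximated by trigonometric polynomials.
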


\begin{proof}
Let \(N>0\) be fixed for the moment. Since finite sums are dense in
\(\mathcal X_{>N}\) and
\(
\|F\|_{L^2(G)}=\|\mathbf 1_GF\|_{L^2(Q)},
\)
with \(F\mapsto\mathbf 1_GF\) bounded on \(L^2(Q)\), it suffices to prove
the desired estimate uniformly for finite sums.

Let \(\phi,\psi\) be trigonometric polynomials on \(\mathbb T^d\) such that
\[
	\operatorname{supp}\widehat\phi
	\cup
	\operatorname{supp}\widehat\psi
	\subset
	\{n\in\mathbb Z^d:\ |n|>N\}.
\]
Define
\[
	u_+(t,x)=e^{ict}e^{it\Delta}\phi(x),
	\qquad
	u_-(t,x)=e^{-ict}e^{-it\Delta}\psi(x),
\]
and set
\(
	F=u_++u_-.
\)

Let \(C_{\rm obs}\) be the constant in \propref{prop:external-inputs}
(i). Choose \(\varepsilon>0\) so small that
\(
	3\varepsilon<C_{\rm obs}^{-1},
\)
and set
\(
	\alpha:=C_{\rm obs}^{-1}-\varepsilon>0.
\)
Let \(\zeta\) be the real-valued trigonometric polynomial given by
\propref{prop:external-inputs} (iii) for this \(\varepsilon\), and denote its
finite Fourier support by \(\mathcal Q_R\).

The proof is divided into five steps. We first establish separate lower
bounds for the two branches after multiplication by \(\zeta\). We then show
that their cross term tends to zero as the frequency cutoff tends to
infinity. After choosing \(N_0\) sufficiently large, we combine these
estimates to obtain a lower bound for \(\zeta F\), transfer it to
\(\chi F\) by the multiplier approximation, and finally pass from finite
spectral sums to the whole space \(\mathcal X_{>N_0}\).

\medskip
\noindent
\emph{Step 1. We prove
\begin{equation}\label{eq:branch-lower-bounds}
	\|\zeta u_+\|_{L^2(Q)}
	\ge
	\alpha\|u_+\|_{L^2(Q)},
	\qquad
	\|\zeta u_-\|_{L^2(Q)}
	\ge
	\alpha\|u_-\|_{L^2(Q)}.
\end{equation}}

Indeed, applying \propref{prop:external-inputs} (i), we have
\begin{equation}\label{eq:forward-branch-observability}
	\|u_+\|_{L^2(Q)}
	\le
	C_{\rm obs}\|\chi u_+\|_{L^2(Q)}.
\end{equation}
By \eqref{eq:forward-multiplier-estimate}, again using \(|e^{ict}|=1\),
\[
	\|(\chi-\zeta)u_+\|_{L^2(Q)}
	\le
	\varepsilon\|u_+\|_{L^2(Q)}.
\]
Combining this estimate with \eqref{eq:forward-branch-observability}, we get
\[
\begin{aligned}
	\|\zeta u_+\|_{L^2(Q)}
	&\ge
	\|\chi u_+\|_{L^2(Q)}
	-
	\|(\chi-\zeta)u_+\|_{L^2(Q)}
	\\
	&\ge
	(C_{\rm obs}^{-1}-\varepsilon)\|u_+\|_{L^2(Q)}
	=
	\alpha\|u_+\|_{L^2(Q)}.
\end{aligned}
\]

The second estimate follows in the same way from
\[
e^{-it\Delta}\psi
=
\overline{e^{it\Delta}\overline\psi},
\]
because \(\chi\) and \(\zeta\) are real-valued.
Thus \eqref{eq:branch-lower-bounds} is proved.
	
\medskip
\noindent
\emph{Step 2. We prove
\begin{equation}\label{eq:cross-term-small}
	|\langle \zeta u_+,\zeta u_-\rangle_{L^2(Q)}|
	\le
	\delta_N
	\|u_+\|_{L^2(Q)}
	\|u_-\|_{L^2(Q)},
	\qquad
	\delta_N\to0
	\quad\text{as }N\to\infty .
\end{equation}}

Write
\begin{equation}\label{4.14-7-28-w}
	\zeta(t,x)
	=
	\sum_{q\in\mathcal Q_R}
	\zeta_q e^{i(q_0t+q'\cdot x)},
	\qquad q=(q_0,q')\in\mathbb Z\times\mathbb Z^d ,
\end{equation}
and
\begin{equation}\label{4.15-7-28-w}
	u_+(t,x)
	=
	\sum_{|n|>N} a_n e^{i(n\cdot x-\mu_n t)},
	\qquad
	u_-(t,x)
	=
	\sum_{|m|>N} b_m e^{i(m\cdot x+\mu_m t)}.
\end{equation}
Using \eqref{4.14-7-28-w} and \eqref{4.15-7-28-w}, we obtain
\begin{equation}\label{4.16-7-28-w}
\begin{aligned}
	\langle \zeta u_+,\zeta u_-\rangle_{L^2(Q)}
	&=
	\sum_{q,r\in\mathcal Q_R}
	\zeta_q\overline{\zeta_r}
	\sum_{\substack{|n|>N\\ |m|>N}}
	a_n\overline{b_m}
	\\
	&\quad\times
	\int_0^{2\pi} e^{i(q_0-r_0-\mu_n-\mu_m)t}\,\d t
	\int_{\mathbb T^d}
	e^{i(n+q'-m-r')\cdot x}\,\d x .
\end{aligned}
\end{equation}
The spatial integral in \eqref{4.16-7-28-w} vanishes unless
\[
	m=n+q'-r'.
\]
Put \(h=q'-r'\). Since \(\mathcal Q_R\) is finite, there exists \(M_R>0\)
such that
\[
	|q_0-r_0|\le M_R,
	\qquad
	|q'-r'|\le M_R,
	\qquad q,r\in\mathcal Q_R.
\]
For \(m=n+h\), we have
\[
	\mu_n+\mu_{n+h}
	=
	2|n|^2+2n\cdot h+|h|^2-2c .
\]
Thus, by increasing \(N_0\) if necessary, we may assume that, for all
\(N\ge N_0\), all \(q,r\in\mathcal Q_R\), and all \(|n|>N\),
\[
	|q_0-r_0-\mu_n-\mu_{n+h}|
	\ge
	c_RN^2,
	\qquad h=q'-r',
\]
where \(c_R>0\) is independent of \(N\). Hence
\[
	\left|
	\int_0^{2\pi} e^{i(q_0-r_0-\mu_n-\mu_{n+h})t}\,\d t
	\right|
	\le
	C_RN^{-2}.
\]
Returning to \eqref{4.16-7-28-w} and using the Cauchy--Schwarz inequality, we get
\[
\begin{aligned}
|\langle \zeta u_+,\zeta u_-\rangle_{L^2(Q)}|
&\le
C_RN^{-2}
\sum_{q,r\in\mathcal Q_R}
|\zeta_q|\,|\zeta_r|
\sum_{|n|>N}|a_n|\,|b_{n+q'-r'}|
\\
&\le
\delta_N
\left(\sum_{|n|>N}|a_n|^2\right)^{1/2}
\left(\sum_{|m|>N}|b_m|^2\right)^{1/2},
\end{aligned}
\]
where \(\delta_N\to0\) as \(N\to\infty\). Since \eqref{4.15-7-28-w} gives
\[
	\|u_+\|_{L^2(Q)}^2
	=
	2\pi\sum_{|n|>N}|a_n|^2,
	\qquad
	\|u_-\|_{L^2(Q)}^2
	=
	2\pi\sum_{|m|>N}|b_m|^2,
\]
we obtain \eqref{eq:cross-term-small}.

\medskip
\noindent
\emph{Step 3. We prove
\begin{equation}\label{eq:zetaF-lower-bound}
	\|\zeta F\|_{L^2(Q)}^2
	\ge
	c_1
	\left(
	\|u_+\|_{L^2(Q)}^2
	+
	\|u_-\|_{L^2(Q)}^2
	\right),
\end{equation}
where \(c_1>0\) is independent of \(N\) and \(F\).}

Indeed,
\[
\begin{aligned}
	\|\zeta F\|_{L^2(Q)}^2
	&=
	\|\zeta u_+\|_{L^2(Q)}^2
	+
	\|\zeta u_-\|_{L^2(Q)}^2
	+
	2\operatorname{Re}
	\langle \zeta u_+,\zeta u_-\rangle_{L^2(Q)}.
\end{aligned}
\]
This, along with \eqref{eq:branch-lower-bounds} and
\eqref{eq:cross-term-small}, yields
\begin{equation}\label{4.18-2-28-w}
\begin{aligned}
	\|\zeta F\|_{L^2(Q)}^2
	\ge
	\alpha^2
	\left(
	\|u_+\|_{L^2(Q)}^2
	+
	\|u_-\|_{L^2(Q)}^2
	\right)
	-
	2\delta_N
	\|u_+\|_{L^2(Q)}
	\|u_-\|_{L^2(Q)} .
\end{aligned}
\end{equation}
Since
\[
	2\|u_+\|_{L^2(Q)}\|u_-\|_{L^2(Q)}
	\le
	\|u_+\|_{L^2(Q)}^2
	+
	\|u_-\|_{L^2(Q)}^2,
\]
we increase \(N_0\) if necessary so that
\[
	\delta_N\le \alpha^2/2,
	\qquad N\ge N_0.
\]
Then, by \eqref{4.18-2-28-w}, we have
\[
	\|\zeta F\|_{L^2(Q)}^2
	\ge
	\frac{\alpha^2}{2}
	\left(
	\|u_+\|_{L^2(Q)}^2
	+
	\|u_-\|_{L^2(Q)}^2
	\right),
\]
which gives \eqref{eq:zetaF-lower-bound} with \(c_1=\alpha^2/2\).

\medskip
\noindent
\emph{Step 4. We prove
\begin{equation}\label{eq:chiF-lower-bound}
	\|\chi F\|_{L^2(Q)}^2
	\ge
	c_2
	\left(
	\|u_+\|_{L^2(Q)}^2
	+
	\|u_-\|_{L^2(Q)}^2
	\right),
\end{equation}
for some \(c_2>0\).}

By \eqref{eq:shifted-double-branch-estimate},
\begin{equation}\label{4.20-7-28-w}
\begin{aligned}
\|(\chi-\zeta)F\|_{L^2(Q)}
&\le
\varepsilon
\left(
\|u_+\|_{L^2(Q)}
+
\|u_-\|_{L^2(Q)}
\right)
\\
&\le
\sqrt{2}\,\varepsilon
\left(
\|u_+\|_{L^2(Q)}^2
+
\|u_-\|_{L^2(Q)}^2
\right)^{1/2}.
\end{aligned}
\end{equation}
Since \(\chi F=\zeta F+(\chi-\zeta)F\), we have
\[
	\|\chi F\|_{L^2(Q)}
	\ge
	\|\zeta F\|_{L^2(Q)}
	-
	\|(\chi-\zeta)F\|_{L^2(Q)}.
\]
Combining this with \eqref{4.20-7-28-w} and
\eqref{eq:zetaF-lower-bound}, we obtain
\begin{equation}\label{4.21-7-28-w}
\|\chi F\|_{L^2(Q)}
\ge
\left(\sqrt{c_1}-\sqrt{2}\,\varepsilon\right)
\left(
\|u_+\|_{L^2(Q)}^2
+
\|u_-\|_{L^2(Q)}^2
\right)^{1/2}.
\end{equation}
Since \(c_1=\alpha^2/2\), \(\alpha=C_{\rm obs}^{-1}-\varepsilon\), and
\(3\varepsilon<C_{\rm obs}^{-1}\), we have
\[
	\sqrt{c_1}-\sqrt{2}\,\varepsilon
	=
	\frac{C_{\rm obs}^{-1}-3\varepsilon}{\sqrt{2}}
	>
	0.
\]
Therefore \eqref{4.21-7-28-w} gives \eqref{eq:chiF-lower-bound} with
\[
	c_2
	=
	\left(\sqrt{c_1}-\sqrt{2}\,\varepsilon\right)^2
	>
	0.
\]

\medskip
\noindent
\emph{Step 5. We prove \eqref{4.10-7-28-wgs}.}

Since
\[
	\|F\|_{L^2(Q)}^2
	=
	\|u_+ + u_-\|_{L^2(Q)}^2
	\le
	2\|u_+\|_{L^2(Q)}^2
	+
	2\|u_-\|_{L^2(Q)}^2,
\]
and
\[
	\|\chi F\|_{L^2(Q)}
	=
	\|F\|_{L^2(G)},
\]
we get from \eqref{eq:chiF-lower-bound} that
\[
	\|F\|_{L^2(G)}^2
	\ge
	\frac{c_2}{2}
	\|F\|_{L^2(Q)}^2.
\]
Taking \(N=N_0\), and using the density of finite sums in
\(\mathcal X_{>N_0}\) together with the boundedness of multiplication by
\(\chi\) on \(L^2(Q)\), we obtain \eqref{4.10-7-28-wgs} with
\(\gamma_0=c_2/2\). Here \(C_{\rm obs}\) and the multiplier \(\zeta\) depend only on
\(d\), \(G\), and the chosen \(\varepsilon\), while the high-frequency
threshold additionally depends on \(c\). Hence \(N_0\) and \(\gamma_0\)
depend only on \(d\), \(c\), and \(G\).

This completes the proof of the lemma.
			\end{proof}

\subsection{Product stability under small removals}
\label{subsec:product-stability}

We shall use the following elementary stability fact in the low-frequency
absorption argument: observability on an admissible measurable product set
remains valid after removing small subsets from the product factors, provided
the closed subspace under consideration satisfies suitable mixed-norm bounds.

Let
\[
G=G_1\times\cdots\times G_m\subset Q
\]
be an admissible measurable product set as in \eqref{1.10-7-27-w}.
For each \(j\), set
\[
Q_{\widehat{\alpha_j}}
:=
\prod_{\ell\notin\alpha_j}Q_\ell .
\]
The variables \(z_{\alpha_j}\) and \(z_{\widehat{\alpha_j}}\) are defined
in \eqref{eq:block-variable-definition} and \eqref{4.2-4-30-w},
respectively.

Let \(X\) be a closed subspace of \(L^2(Q)\). We assume that \(X\) satisfies
the following hypothesis.

\medskip
\noindent
\textbf{Hypothesis (H).}
There exist constants \(\gamma>0\) and \(K_X>0\) such that the following hold.

\begin{enumerate}
	\item Observability on \(G\):
	\[
	\|f\|_{L^2(G)}^2
	\ge
	\gamma \|f\|_{L^2(Q)}^2,
	\qquad \forall f\in X.
	\]
	
	\item Mixed-norm estimates:
	\[
	\|f\|_{L^4_{z_{\alpha_j}}L^2_{z_{\widehat{\alpha_j}}}}
	\le
	K_X\|f\|_{L^2(Q)},
	\qquad \forall f\in X,\quad j=1,\ldots,m.
	\]
\end{enumerate}

\begin{lemma}[Product stability under small removals]
	\label{lem:product-stability}
	With the above notation and Hypothesis (H), there exists \(\delta>0\),
	depending only on \(\gamma\), \(K_X\), and \(G\), such that if
	\(I_j\subset G_j\) is measurable and \(|G_j\setminus I_j|\le\delta\) for every \(j\), then
	\[
	\|f\|_{L^2(I)}^2
	\ge
	\frac{\gamma}{2}\|f\|_{L^2(Q)}^2,
	\qquad \forall f\in X.
	\]
	Here \(I=I_1\times\cdots\times I_m\).
\end{lemma}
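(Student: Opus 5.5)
The plan is to bound the mass of \(f\) on \(G\setminus I\) by a small multiple of \(\|f\|_{L^2(Q)}^2\), using the mixed-norm bounds in Hypothesis (H). Since \(I=I_1\times\cdots\times I_m\subset G_1\times\cdots\times G_m=G\), we have the covering
\[
G\setminus I\subset\bigcup_{j=1}^m\Bigl(\{z_{\alpha_j}\in G_j\setminus I_j\}\cap Q\Bigr).
\]
Indeed, a point of \(G\) that is not in \(I\) must have some block coordinate \(z_{\alpha_j}\in G_j\setminus I_j\). Hence
\[
\|f\|_{L^2(G\setminus I)}^2\le\sum_{j=1}^m\int_{G_j\setminus I_j}\Bigl(\int_{Q_{\widehat{\alpha_j}}}|f|^2\,\d z_{\widehat{\alpha_j}}\Bigr)\d z_{\alpha_j}.
\]

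For each \(j\), set \(g_j(z_{\alpha_j}):=\int_{Q_{\widehat{\alpha_j}}}|f|^2\,\d z_{\widehat{\alpha_j}}\). Applying Cauchy--Schwarz in \(z_{\alpha_j}\) over the set \(G_j\setminus I_j\) gives
\[
\int_{G_j\setminus I_j}g_j\le|G_j\setminus I_j|^{1/2}\|g_j\|_{L^2_{z_{\alpha_j}}}=|G_j\setminus I_j|^{1/2}\,\|f\|_{L^4_{z_{\alpha_j}}L^2_{z_{\widehat{\alpha_j}}}}^2.
\]
By item (2) of Hypothesis (H), the right-hand side is at most \(\delta^{1/2}K_X^2\|f\|_{L^2(Q)}^2\). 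Summing over \(j\) yields
\[
\|f\|_{L^2(G\setminus I)}^2\le m\,\delta^{1/2}K_X^2\|f\|_{L^2(Q)}^2.
\]

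Since \(I\subset G\), the sets \(I\) and \(G\setminus I\) are disjoint, so \(\|f\|_{L^2(I)}^2=\|f\|_{L^2(G)}^2-\|f\|_{L^2(G\setminus I)}^2\). By item (1) of Hypothesis (H) this is at least \((\gamma-m\delta^{1/2}K_X^2)\|f\|_{L^2(Q)}^2\). Choosing
\[
\delta=\Bigl(\frac{\gamma}{2mK_X^2}\Bigr)^2,
\]
which depends only on \(\gamma\), \(K_X\) and \(G\) (the latter through \(m\)), gives the claimed bound \(\|f\|_{L^2(I)}^2\ge\frac{\gamma}{2}\|f\|_{L^2(Q)}^2\).

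The only step needing care is the measure-theoretic covering of \(G\setminus I\) together with the Fubini rewriting of each piece as an integral in \(z_{\alpha_j}\) of the inner \(L^2_{z_{\widehat{\alpha_j}}}\) norm. Both are routine because \(f\in L^2(Q)\) and all the sets involved are measurable, so I expect no real obstacle.
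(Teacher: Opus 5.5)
Your proposal is correct and follows essentially the same argument as the paper: you cover $G\setminus I$ by the pieces where one block coordinate $z_{\alpha_j}$ lies in $G_j\setminus I_j$, bound each piece by Cauchy--Schwarz in $z_{\alpha_j}$ and the mixed $L^4_{z_{\alpha_j}}L^2_{z_{\widehat{\alpha_j}}}$ estimate to get $m\delta^{1/2}K_X^2\|f\|_{L^2(Q)}^2$, and subtract this from the observability bound on $G$. The differences are cosmetic. The paper integrates over $\prod_{i\ne j}G_i$ before enlarging to $Q_{\widehat{\alpha_j}}$, and it additionally caps $\delta$ by $\min\{1,\frac12\min_j|G_j|\}$, which the inequality itself does not need.
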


\begin{proof}
	Fix \(f\in X\). For each \(j=1,\ldots,m\), define
	\[
	H_j(z_{\alpha_j})
	=
	\int_{\prod_{i\ne j}G_i}
	|f(z_{\alpha_j},z_{\widehat{\alpha_j}})|^2\,
	\d z_{\widehat{\alpha_j}} .
	\]
	
	We first estimate the loss caused by removing a small subset from a single
	factor. Since \(\prod_{i\ne j}G_i
\subset Q_{\widehat{\alpha_j}}\), we have
	\[
	H_j(z_{\alpha_j})
	\le
	\int |f(z_{\alpha_j},z_{\widehat{\alpha_j}})|^2\,
	\d z_{\widehat{\alpha_j}} .
	\]
	Therefore, by (2) of Hypothesis (H), for every \(j=1,\ldots,m\),
	\[
	\|H_j\|_{L^2(G_j)}
	\le
	\|f\|_{L^4_{z_{\alpha_j}}
	L^2_{z_{\widehat{\alpha_j}}}}^2
	\le
	K_X^2\|f\|_{L^2(Q)}^2.
	\]
	Hence, by the Cauchy--Schwarz inequality,
	\begin{equation}\label{4.23-7-28-w}
	\begin{aligned}
	\int_{(G_j\setminus I_j)\times\prod_{i\ne j}G_i}|f|^2
	&=
	\int_{G_j\setminus I_j}H_j(z_{\alpha_j})\,\d z_{\alpha_j}
	\\
	&\le
	|G_j\setminus I_j|^{1/2}\|H_j\|_{L^2(G_j)}
	\\
	&\le
	\delta^{1/2}K_X^2\|f\|_{L^2(Q)}^2.
	\end{aligned}
	\end{equation}
	
	We now pass from one factor to the whole product. Since
	\[
	G\setminus I
	\subset
	\bigcup_{j=1}^m
	\left[
	(G_j\setminus I_j)\times\prod_{i\ne j}G_i
	\right],
	\]
	it follows from \eqref{4.23-7-28-w} that
	\[
	\|f\|_{L^2(G\setminus I)}^2
	\le
	m\delta^{1/2}K_X^2\|f\|_{L^2(Q)}^2.
	\]
	Choose
	\[
	\delta
	=
	\min\left\{
	1, \frac12\min_{1\leq j\leq m}|G_j|,
	\left(\frac{\gamma}{2mK_X^2}\right)^2
	\right\}.
	\]
	Then
	\begin{equation}\label{4.24-7-28-w}
	\|f\|_{L^2(G\setminus I)}^2
	\le
	\frac{\gamma}{2}\|f\|_{L^2(Q)}^2.
	\end{equation}
	
	Finally, since \(I\subset G\), by \eqref{4.24-7-28-w} and (1) of
	Hypothesis (H),
	\[
	\|f\|_{L^2(I)}^2
	=
	\|f\|_{L^2(G)}^2
	-
	\|f\|_{L^2(G\setminus I)}^2
	\ge
	\frac{\gamma}{2}\|f\|_{L^2(Q)}^2.
	\]
	This completes the proof.
\end{proof}

\subsection{Spectral structure and translations on the double spectrum}
\label{sec:double-spectrum}

The purpose of this subsection is to describe the spectral structure of the
double spectrum \(\Sigma_c\) defined in \eqref{1.18-7-28-w}, and the
translation properties needed later to pass from the high-frequency estimate
on \(\mathcal X_{>N_0}\) in \lemref{lem:high-frequency} to the full
double-spectrum space \(\mathcal X\).

Let \(c\in\mathbb R\) be fixed as in \thmref{thm:full}, and set
\(\mu_n=|n|^2-c\) for \(n\in\mathbb Z^d\). We write the double spectrum
\(\Sigma_c\) in \eqref{1.18-7-28-w} in the time-space order
\[
	\Sigma_c
	=
	\{(-\mu_n,n),(\mu_n,n): n\in\mathbb Z^d\}
	\subset \mathbb R\times\mathbb Z^d .
\]
For \(\eta=(\tau,k)\in\Sigma_c\), write
\begin{equation}\label{4.25-7-29-w}
	e_\eta(t,x)=e^{i(\tau t+k\cdot x)},
	\qquad t\in(0,2\pi),\quad x\in\mathbb T^d .
\end{equation}
For \(\Lambda\subset\Sigma_c\), let \(X_\Lambda\) be the closure in \(L^2(Q)\)
of the linear span of \(\{e_\eta:\eta\in\Lambda\}\).

For each \(n\in\mathbb Z^d\), let \(E_n^\Lambda\) be the subspace of
\(\operatorname{span}\{e^{-i\mu_n t},e^{i\mu_n t}\}\) spanned by the time
exponentials whose corresponding spacetime frequencies
\[
	(-\mu_n,n),\qquad (\mu_n,n)
\]
belong to \(\Lambda\). When \(\mu_n=0\), the two generators coincide and are
counted only once. Then every \(f\in X_\Lambda\) admits the spatial Fourier
expansion
\[
	f(t,x)=\sum_{n\in\mathbb Z^d} e^{in\cdot x}f_n(t),
	\qquad
	f_n\in E_n^\Lambda,
\]
with convergence in \(L^2(Q)\).

The time-frequency blocks \(E_n^\Lambda\) satisfy uniform Riesz bounds. More
precisely, there exist constants \(0<A_c\le B_c<\infty\), depending only on
\(c\), such that, for every \(n\in\mathbb Z^d\) and every
\(f_n\in E_n^\Lambda\),
\begin{equation}\label{equ-Riesz-1}
	A_c|\alpha_n|^2
	\le
	\|f_n\|_{L^2(0,2\pi)}^2
	\le
	B_c|\alpha_n|^2 .
\end{equation}
Here \(\alpha_n\) denotes the coefficient vector of \(f_n\) with respect to
the distinct generators of \(E_n^\Lambda\) selected from
\(\{e^{-i\mu_n t},e^{i\mu_n t}\}\), with the convention \(\alpha_n=0\) if
\(E_n^\Lambda=\{0\}\).

Indeed, this follows by inspecting the Gram matrices of size at most
\(2\times2\). When both generators are present and \(\mu_n\neq0\), their Gram matrix is
\[
\mathsf G_n
=
\begin{pmatrix}
	2\pi & J_n\\
	\overline{J_n} & 2\pi
\end{pmatrix},
\qquad
J_n
:=
\int_0^{2\pi}e^{-2i\mu_nt}\,\d t.
\]
Its eigenvalues are \(2\pi\pm|J_n|\). Since \(\mu_n\neq0\), the two
exponentials are linearly independent and hence \(|J_n|<2\pi\). Moreover,
the nonzero values of \(\mu_n=|n|^2-c\) form a discrete set separated from
zero, while \(J_n\to0\) as \(|n|\to\infty\). Therefore
\[
\sup_{\mu_n\neq0}|J_n|<2\pi,
\]
which yields uniform positive lower and upper bounds for all two-generator
blocks. The one-generator blocks are immediate.

As a direct consequence of \eqref{equ-Riesz-1} and the orthogonality of
spatial Fourier modes,
\begin{equation}\label{equ-Riesz-2}
	\|f\|_{L^2(Q)}^2
	\simeq_{c,d}
	\sum_{n\in\mathbb Z^d}|\alpha_n|^2 .
\end{equation}

The next lemma records the boundedness and local translation property of
spectral shifts on the spaces \(X_\Lambda\) associated with subsets
\(\Lambda\subset\Sigma_c\). It will be used later to form finite-difference
operators in the low-frequency absorption argument.
 
\begin{lemma}[Bounded spectral translations]
	\label{lem:spectral-translations}
	Let \(\Lambda\subset\Sigma_c\), let \(X_\Lambda\) be as above, and let
	\(\xi=(s,y)\in\mathbb R\times\mathbb R^d\). Define \(T_\xi\) on finite
	spectral sums by
	\[
	T_\xi e_\eta
	=
	e^{-i\eta\cdot\xi}e_\eta,
	\qquad
	\eta\in\Lambda .
	\]
	Then the following hold.
	\begin{itemize}
		\item[(i)] \(T_\xi\) extends uniquely to a bounded operator on
		\(X_\Lambda\), and
		\[
		\|T_\xi\|_{\mathcal L(X_\Lambda)}
		\le C_{c,d},
		\]
		where \(C_{c,d}\) is independent of \(\Lambda\) and \(\xi\).

		\item[(ii)] If \(\Omega\subset Q\) is measurable and
		\(\Omega-\xi\subset Q\), then for every \(f\in X_\Lambda\),
		\[
		T_\xi f=f(\cdot-\xi)
		\quad\text{in }L^2(\Omega),
		\]
		with spatial variables understood modulo \(2\pi\).
	\end{itemize}
\end{lemma}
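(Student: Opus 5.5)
The plan is to define \(T_\xi\) blockwise through the spatial Fourier expansion and to control it with the Riesz bounds \eqref{equ-Riesz-1}--\eqref{equ-Riesz-2}. For a finite spectral sum \(f=\sum_n e^{in\cdot x}f_n(t)\) with \(f_n\in E_n^\Lambda\), we have \(\eta=(\pm\mu_n,n)\) and \(\eta\cdot\xi=\pm\mu_n s+n\cdot y\). Hence
\[
T_\xi f=\sum_n e^{in\cdot x}e^{-in\cdot y}\,(\tau_s f_n)(t),
\]
where \(\tau_s\) multiplies the coefficient of \(e^{\mp i\mu_n t}\) by \(e^{\pm i\mu_n s}\). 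Thus, on the coefficient vector \(\alpha_n\) of \(f_n\), \(T_\xi\) acts by a diagonal unimodular matrix, so the coefficient vector of the \(n\)-th block of \(T_\xi f\) has the same Euclidean norm \(|\alpha_n|\).

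When \(\mu_n=0\), the block is one-dimensional, and the phase is simply \(e^{-in\cdot y}\). This is consistent because the two generators coincide and both formal phases agree. Applying \eqref{equ-Riesz-2} twice then gives
\[
\|T_\xi f\|_{L^2(Q)}^2\le B_c\sum_n|\alpha_n|^2\le (B_c/A_c)\|f\|_{L^2(Q)}^2 .
\]
The constant depends only on \(c\) (and \(d\) through normalization), and not on \(\Lambda\) or \(\xi\). Finite sums are dense in \(X_\Lambda\) by definition, so \(T_\xi\) extends uniquely by continuity, which gives (i). One point needs checking: the map sending \(f\) to the coefficient family \((\alpha_n)\) is well defined on all of \(X_\Lambda\). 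This follows from the same two-sided Riesz bound, which makes it an isomorphism onto \(\ell^2\).

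For (ii), first take \(f\) a finite sum. Each \(e_\eta\) is given by an explicit formula \(e^{i(\tau t+k\cdot x)}\) that makes sense for all \((t,x)\in\mathbb R\times\mathbb T^d\), with \(k\in\mathbb Z^d\) giving \(2\pi\)-periodicity in \(x\). Then
\[
e_\eta(z-\xi)=e^{-i\eta\cdot\xi}e_\eta(z)
\]
pointwise, so \(T_\xi f(z)=f(z-\xi)\) for every \(z\in\Omega\). Here we use \(\Omega-\xi\subset Q\), so that \(f(\cdot-\xi)\) on \(\Omega\) only samples \(f\) inside \(Q\). The condition is needed only in the time variable, since space is periodic.

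For general \(f\in X_\Lambda\), choose finite sums \(f_k\to f\) in \(L^2(Q)\). By (i), \(T_\xi f_k\to T_\xi f\) in \(L^2(Q)\), hence in \(L^2(\Omega)\). Also
\[
\|f_k(\cdot-\xi)-f(\cdot-\xi)\|_{L^2(\Omega)}=\|f_k-f\|_{L^2(\Omega-\xi)}\le\|f_k-f\|_{L^2(Q)}\to0,
\]
by translation invariance of Lebesgue measure and the torus structure. The two limits therefore agree in \(L^2(\Omega)\).

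The main obstacle is the uniformity in (i). It rests entirely on the uniform lower Riesz bound \(A_c\), i.e. on \(\sup_{\mu_n\ne0}|J_n|<2\pi\) from the preceding discussion. Beyond that, one only has to handle the degenerate case \(\mu_n=0\), which requires verifying that the definition of \(T_\xi\) is unambiguous there.
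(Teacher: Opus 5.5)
Your proposal is correct and follows essentially the same route as the paper. For (i), you use blockwise unimodular phase multiplication on the coefficient vectors together with the uniform Riesz bounds \eqref{equ-Riesz-1}--\eqref{equ-Riesz-2} and density. For (ii), you use the pointwise identity \(e_\eta(z-\xi)=e^{-i\eta\cdot\xi}e_\eta(z)\) for finite sums, followed by \(L^2\) approximation using \(\Omega-\xi\subset Q\). Your explicit check that the phase is unambiguous on the degenerate blocks \(\mu_n=0\) is a small detail the paper leaves implicit.
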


\begin{proof}
	(i) Write a finite spectral sum \(p\in X_\Lambda\) as
\[
p(t,x)=\sum_n e^{in\cdot x}p_n(t),\qquad p_n\in E_n^\Lambda .
\]
On each time-frequency block \(E_n^\Lambda\), the operator \(T_\xi\)
multiplies the basis coefficients by unimodular factors, and therefore
preserves the \(\ell^2\)-norm of the coefficient vector. By the uniform
Riesz bounds \eqref{equ-Riesz-1}, we have
\[
\|T_\xi p\|_{L^2(Q)}\le C_{c,d}\|p\|_{L^2(Q)}
\]
for all such finite sums. Since finite spectral sums are dense in \(X_\Lambda\), \(T_\xi\) extends
uniquely to a bounded operator on \(X_\Lambda\) with the same bound.
	
(ii) Let \(\Omega\subset Q\) be measurable and satisfy
\(\Omega-\xi\subset Q\). For a finite spectral sum \(p\), the identity
\[
	T_\xi p=p(\cdot-\xi)
\]
holds in \(L^2(\Omega)\), because
\[
	e_\eta(z-\xi)=e^{-i\eta\cdot\xi}e_\eta(z).
\]
Now let \(f\in X_\Lambda\), and choose finite spectral sums
\(p_k\to f\) in \(L^2(Q)\). By (i),
\[
	T_\xi p_k\to T_\xi f
	\quad\text{in }L^2(\Omega).
\]
Moreover, since \(\Omega-\xi\subset Q\),
\[
	\|p_k(\cdot-\xi)-f(\cdot-\xi)\|_{L^2(\Omega)}
 =
	\|p_k-f\|_{L^2(\Omega-\xi)} \leq
	\|p_k-f\|_{L^2(Q)}
	\longrightarrow0.
\]
Passing to the limit gives
\[
	T_\xi f=f(\cdot-\xi)
	\quad\text{in }L^2(\Omega).
\]
\end{proof}

\subsection{Absorption of a single spacetime frequency}
\label{subsec:single-frequency-absorption}

Throughout this subsection, we use the notation and the time-space ordering of
\(\Sigma_c\) from the preceding subsection. Let \(\Lambda\subset\Sigma_c\), and
set
\[
	Y:=X_\Lambda .
\]
By \lemref{lem:spectral-translations}, for every
\(\xi\in\mathbb R\times\mathbb R^d\), one has
\(
	T_\xi\in\mathcal L(Y).
\)

Assume that \(Y\) satisfies Hypothesis (H) from
\subsecref{subsec:product-stability}, with constants \(\gamma>0\) and
\(K_Y>0\). That is, 
\begin{equation}\label{eq:old-space-observable}
	\|f\|_{L^2(G)}^2
	\ge
	\gamma\|f\|_{L^2(Q)}^2,
	\qquad
	f\in Y .
\end{equation}

We first record a simple way to choose a non-resonant small translation. This
will be used in the proof of \lemref{lem:one-frequency-absorption}.

\begin{lemma}[Non-resonant small translations]
	\label{lem:nonresonant-translation-choice}
	With the above notation, let \(\lambda\in\Sigma_c\setminus\Lambda\).
	Then, for every \(\rho>0\), there exists
	\[
		\xi\in B_{\mathbb R^{1+d}}(0,\rho)
	\]
	such that
	\begin{equation}\label{eq:nonresonant-xi-choice}
		e^{-i\eta\cdot\xi}
		\neq
		e^{-i\lambda\cdot\xi},
		\qquad
		\forall \eta\in\Lambda .
	\end{equation}
\end{lemma}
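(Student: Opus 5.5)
The plan is to turn the condition into a statement about the exceptional set of translations and show that this set is a null set. For each \(\eta\in\Lambda\), the equality \(e^{-i\eta\cdot\xi}=e^{-i\lambda\cdot\xi}\) holds precisely when
\[
(\eta-\lambda)\cdot\xi\in 2\pi\mathbb Z .
\]
Since \(\eta\neq\lambda\), the vector \(\eta-\lambda\in\mathbb R^{1+d}\) is nonzero. The set of such \(\xi\) is therefore
\[
Z_\eta:=\bigcup_{k\in\mathbb Z}\{\xi\in\mathbb R^{1+d}:(\eta-\lambda)\cdot\xi=2\pi k\}.
\]
This is a countable union of affine hyperplanes, so it has Lebesgue measure zero in \(\mathbb R^{1+d}\).

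Next I would use that \(\Lambda\subset\Sigma_c\) is countable, because \(\Sigma_c\) is indexed by \(n\in\mathbb Z^d\) and two signs. Hence the bad set \(Z:=\bigcup_{\eta\in\Lambda}Z_\eta\) is again a countable union of null sets, and so it is null. The ball \(B_{\mathbb R^{1+d}}(0,\rho)\) has positive measure, so it cannot be contained in \(Z\). Any \(\xi\in B_{\mathbb R^{1+d}}(0,\rho)\setminus Z\) then satisfies \eqref{eq:nonresonant-xi-choice}.

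The only point needing care is that \(\eta-\lambda\neq 0\) for every \(\eta\in\Lambda\). This holds because \(\lambda\notin\Lambda\) and elements of \(\Sigma_c\) are viewed as points of \(\mathbb R\times\mathbb Z^d\). In particular, when \(\mu_n=0\) the two labels \((\pm\mu_n,n)\) coincide as a single point, so no pair \(\eta\neq\lambda\) can represent the same exponential. If one prefers to avoid measure theory, a Baire category argument works equally well: each \(Z_\eta\) is closed with empty interior, and countably many of them cannot cover the open ball. I do not expect any real obstacle. The step most worth checking is the countability of \(\Lambda\), which is immediate from the definition of \(\Sigma_c\).
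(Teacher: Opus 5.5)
Your proposal is correct and follows essentially the same argument as the paper. Both rewrite the resonance condition as \((\eta-\lambda)\cdot\xi\in 2\pi\mathbb Z\), observe that this is a countable union of proper affine hyperplanes (using \(\eta\neq\lambda\)), and use the countability of \(\Sigma_c\) to conclude that the exceptional set is null and so cannot contain the ball. Your Baire-category alternative is also valid.
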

\begin{proof}
	Let \(\rho>0\) be fixed. Since \(\lambda\notin\Lambda\), for every
	\(\eta\in\Lambda\) we have \(\eta-\lambda\neq0\).
	For fixed \(\eta\in\Lambda\), the resonance condition
	\[
		e^{-i\eta\cdot\xi}
		=
		e^{-i\lambda\cdot\xi}
	\]
	is equivalent to
	\[
		(\eta-\lambda)\cdot\xi\in 2\pi\mathbb Z .
	\]
	Thus its solution set is a countable union of proper affine hyperplanes in
	\(\mathbb R^{1+d}\). Since \(\Lambda\subset\Sigma_c\) and \(\Sigma_c\) is
	countable, the union of these exceptional sets over all \(\eta\in\Lambda\)
	has Lebesgue measure zero. Hence it cannot contain
	\(B_{\mathbb R^{1+d}}(0,\rho)\).

	We choose \(\xi\in B_{\mathbb R^{1+d}}(0,\rho)\) outside this exceptional
	set. Then \eqref{eq:nonresonant-xi-choice} holds, and the proof is complete.
\end{proof}

The next lemma is the basic absorption step. It shows that observability is
preserved after adding one new spacetime frequency to \(Y=X_\Lambda\). The
finite-difference argument used in its proof is inspired by the proof of
Miheev's theorem in the uncertainty principle for trigonometric series; see
Havin--J\"oricke~\cite{HavinJoricke1994} and the survey of
Bonami--Demange~\cite{BonamiDemange2006}.

\begin{lemma}[Absorption of a single spacetime frequency]
	\label{lem:one-frequency-absorption}
	With the above notation and assumptions, let
	\(\lambda\in\Sigma_c\setminus\Lambda\), and let \(e_\lambda\) be defined
	by \eqref{4.25-7-29-w}. Then there exists \(\gamma'>0\) such that
\begin{align}\label{equ84-1}
	\|g+ae_\lambda\|_{L^2(G)}^2
\ge
\gamma'
\|g+ae_\lambda\|_{L^2(Q)}^2,
\qquad
\forall g\in Y,\quad \forall a\in\mathbb C .
\end{align}
\end{lemma}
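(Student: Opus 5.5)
We argue by contradiction, using the finite-difference idea behind Miheev's theorem. Suppose \eqref{equ84-1} fails. Then there are \(g_k\in Y\) and \(a_k\in\mathbb C\) with \(\|g_k+a_ke_\lambda\|_{L^2(Q)}=1\) and \(\|g_k+a_ke_\lambda\|_{L^2(G)}\to0\).

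First we check that \(\{a_k\}\) is bounded. By \eqref{equ-Riesz-2}, applied to \(X_{\Lambda\cup\{\lambda\}}\), the map \(g+ae_\lambda\mapsto a\) is bounded on \(X_{\Lambda\cup\{\lambda\}}\). The point is that \(e_\lambda\) enters the coefficient vector of its spatial block with a uniform Riesz constant. Hence \(|a_k|\lesssim1\) and \(\|g_k\|_{L^2(Q)}\lesssim1\). Passing to a subsequence, \(a_k\to a\).

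Next we use product stability to make room for a translation. Choose, for each \(j\), a measurable \(I_j\subset G_j\) with \(|G_j\setminus I_j|\le\delta\), where \(\delta\) is given by \lemref{lem:product-stability} applied to \(Y\). We also require that \(I_j\) lies at positive distance \(\rho_j\) from the boundary of \(Q_{\alpha_j}\) in the time coordinate. This is possible because only the time interval \((0,2\pi)\) has a boundary; the spatial coordinates are periodic. Set \(I=\prod_jI_j\) and \(\rho=\min_j\rho_j\). With \(\rho\) fixed, \lemref{lem:nonresonant-translation-choice} gives \(\xi\) with \(|\xi|<\rho\) that is non-resonant with \(\lambda\). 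By construction \(I-\xi\subset Q\). We further want \(I-\xi\subset G\). We arrange this by taking \(I_j\) inside the set of points of \(G_j\) that remain in \(G_j\) after all shifts of size at most \(\rho\). For measurable \(G_j\) this needs a small Lebesgue-density or Steinhaus-type step. If that fails, we instead work with \(I\cap(I+\xi)\), using continuity of translation in \(L^1\) to keep the removed measure below \(\delta\) once \(|\xi|\) is small.

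Now form the difference operator \(D:=T_\xi-e^{-i\lambda\cdot\xi}\). It annihilates \(e_\lambda\), so \(D(g_k+a_ke_\lambda)=Dg_k\). By \lemref{lem:spectral-translations}(ii), on \(I\) we have \(Dg_k=f_k(\cdot-\xi)-e^{-i\lambda\cdot\xi}f_k\), where \(f_k=g_k+a_ke_\lambda\). Since both \(I\) and \(I-\xi\) lie in \(G\), this gives \(\|Dg_k\|_{L^2(I)}\le2\|f_k\|_{L^2(G)}\to0\). \lemref{lem:product-stability} then yields \(\|Dg_k\|_{L^2(Q)}\to0\).

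The key step is to invert \(D\) on \(Y\), that is, to show \(\|Dg\|_{L^2(Q)}\ge\kappa\|g\|_{L^2(Q)}\) for \(g\in Y\). On each block \(E_n^\Lambda\), \(D\) acts diagonally with multipliers \(e^{-i\eta\cdot\xi}-e^{-i\lambda\cdot\xi}\). Non-resonance makes each multiplier nonzero, but we need a uniform lower bound over the infinite set \(\Lambda\), and this is the main obstacle. To handle it we split \(Y\) into the high-frequency part \(|n|>N_1\) and a finite remainder. On the finite remainder the multipliers are bounded below by non-resonance. For the high part we try to avoid inverting \(D\) altogether. The high part already satisfies Hypothesis (H) through \lemref{lem:high-frequency} and \propref{prop:external-inputs}(ii). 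Moreover, the \(e_\lambda\) component is orthogonal, up to a compact error, to the high-frequency blocks in \(L^2(G)\), because by \eqref{equ-Riesz-2} the projection onto finitely many blocks is compact. So we reduce to \(\Lambda\) finite modulo the high part. There we pick \(\xi\) so that \(\inf_{\eta\in\Lambda}|e^{-i\eta\cdot\xi}-e^{-i\lambda\cdot\xi}|>0\). This is possible for a generic \(\xi\) in a small ball when the frequencies \(\eta\) involved are finitely many.

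Given the inversion, \(g_k\to0\) in \(L^2(Q)\). Then \(f_k\to ae_\lambda\), so \(\|ae_\lambda\|_{L^2(G)}=0\). Since \(|e_\lambda|=1\) and \(|G|>0\), this forces \(a=0\). Hence \(f_k\to0\), which contradicts \(\|f_k\|_{L^2(Q)}=1\).
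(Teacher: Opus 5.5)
Your set-up is correct up to $\|Dg_k\|_{L^2(Q)}\to0$. That includes the bound on $a_k$ from the Riesz estimate, the fallback set $I_j\cap(I_j+\xi_{\alpha_j})$ (with $\rho$ fixed by continuity of translations before you pick the non-resonant $\xi$), the bound $\|Dg_k\|_{L^2(I)}\le2\|f_k\|_{L^2(G)}$, and the appeal to \lemref{lem:product-stability}. Your first option for $I_j$ does not work in general: the set of points of $G_j$ that stay in $G_j$ under all shifts of size at most $\rho$ is empty when $G_j$ has empty interior, e.g.\ a fat Cantor set. Only the fallback is valid, and it is the paper's construction.

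\textbf{The gap is the ``key step'' of inverting $D$ on $Y$.} A bound $\|Dg\|_{L^2(Q)}\ge\kappa\|g\|_{L^2(Q)}$ on $Y$ fails in the case that matters, $\Lambda\supset\Lambda_{>N_0}$. If all coordinates of $\xi/2\pi$ were rational, infinitely many $\eta\in\Lambda$ would resonate exactly. So a non-resonant $\xi=(s,y)$ has some irrational coordinate of $\xi/2\pi$. Weyl's theorem then makes the phases $(|n|^2-c)s+n\cdot y \bmod 2\pi$ equidistribute along a coordinate direction $n=n_\ell e_\ell$. Hence the multipliers $e^{-i\eta\cdot\xi}-e^{-i\lambda\cdot\xi}$, $\eta\in\Lambda$, accumulate at $0$: $D$ is injective but never bounded below. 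Your workaround does not supply the missing bound. Hypothesis (H) for the high part controls $g_k^{\rm high}$ only through its size on $G$, which you never estimate. ``Reducing to $\Lambda$ finite modulo the high part'' does not remove the infinitely many high multipliers on which $D$ still acts.

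\textbf{The missing idea is that no quantitative inversion is needed, because $Y$ is itself observable.} Since $g_k-g_\ell\in Y$ and
$\|g_k-g_\ell\|_{L^2(G)}\le\|f_k\|_{L^2(G)}+\|f_\ell\|_{L^2(G)}+|a_k-a_\ell|\,|G|^{1/2}\to0$,
estimate \eqref{eq:old-space-observable} shows $(g_k)$ is Cauchy in $L^2(Q)$. So $g_k\to g\in Y$, and $h=g+ae_\lambda$ has $\|h\|_{L^2(Q)}=1$ and $h=0$ a.e.\ on $G$. Boundedness of $T_\xi$ on $Y$ and your product-stability step then give $Dg=0$ in $L^2(Q)$ exactly. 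Now only injectivity is needed: $D$ multiplies each coefficient $c_\eta$ by $e^{-i\eta\cdot\xi}-e^{-i\lambda\cdot\xi}\neq0$, so uniqueness of the block expansion (from the Riesz bounds) gives $g=0$. Hence $h=ae_\lambda$, and $a=0$ because $|G|>0$, contradicting $\|h\|_{L^2(Q)}=1$. This is the paper's proof: take the limit first, obtain the exact identity $D_\xi f=0$ on $I$, apply product stability, and conclude by pointwise non-resonance.
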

\begin{proof}
If $e_\lambda\in Y$, then \eqref{equ84-1} holds clearly. So we assume $e_\lambda\notin Y$ now.
We argue by contradiction. The proof has two main ideas. First, from the
failure of the estimate we obtain a normalized limit \(h=f+ae_\lambda\) which
vanishes on \(G\). Second, we use a small non-resonant translation to form a finite difference
which cancels the \(e_\lambda\)-term. Product stability and the
non-resonance condition then give the contradiction.

We give the details in the following steps.
	
\medskip
\noindent
\emph{Step 1. We construct a nonzero limit from a contradiction sequence.}

Assume that \eqref{equ84-1} is false. Then, there exist sequences
\(\{f_\nu\}_{\nu\ge1}\subset Y\) and
\(\{a_\nu\}_{\nu\ge1}\subset\mathbb C\) such that
\begin{equation}\label{eq:absorption-contradiction-sequence}
	\|f_\nu+a_\nu e_\lambda\|_{L^2(Q)}=1,
	\qquad
	\|f_\nu+a_\nu e_\lambda\|_{L^2(G)}\to0
	\quad\text{as } \nu\to\infty .
\end{equation}
Since $e_\lambda\notin Y$ and \(Y\) is closed in
\(L^2(Q)\), it follows that
\[
	d_\lambda:=\operatorname{dist}(e_\lambda,Y)>0 .
\]
Together with the first equality in
\eqref{eq:absorption-contradiction-sequence}, this gives
\[
	1
	=
	\|f_\nu+a_\nu e_\lambda\|_{L^2(Q)}
	\ge
	|a_\nu|d_\lambda,
	\qquad \nu\ge1.
\]
Hence \(\{a_\nu\}_{\nu\ge1}\) is bounded. Passing to a subsequence, we may
assume that
\begin{equation}\label{4.31-7-29-wg}
	a_\nu\to a
	\quad\text{as } \nu\to\infty .
\end{equation}

We next prove that \(\{f_\nu\}_{\nu\ge1}\) is Cauchy in \(L^2(Q)\). Since
\(Y\) is a linear subspace, \(f_\nu-f_\ell\in Y\). By
\eqref{eq:old-space-observable},
\begin{equation}\label{4.32-7-29-wg}
	\|f_\nu-f_\ell\|_{L^2(Q)}
	\le
	\gamma^{-1/2}\|f_\nu-f_\ell\|_{L^2(G)} .
\end{equation}
On the other hand,
\[
	f_\nu-f_\ell
	=
	(f_\nu+a_\nu e_\lambda)
	-
	(f_\ell+a_\ell e_\lambda)
	-
	(a_\nu-a_\ell)e_\lambda \;\;\mbox{ in } L^2(G).
\]
The first two terms tend to zero in \(L^2(G)\) as
\(\nu,\ell\to\infty\), by
\eqref{eq:absorption-contradiction-sequence}, and the last term tends to zero
by \eqref{4.31-7-29-wg}. Hence
\[
	\|f_\nu-f_\ell\|_{L^2(G)}\to0
	\quad\text{as } \nu,\ell\to\infty .
\]
This, together with \eqref{4.32-7-29-wg}, shows that
\(\{f_\nu\}_{\nu\ge1}\) is Cauchy in \(L^2(Q)\). Since \(Y\) is closed, there
exists \(f\in Y\) such that
\[
	f_\nu\to f
	\quad\text{in }L^2(Q).
\]
Set
\begin{equation}\label{4.34-7-29-wwggss}
	h=f+ae_\lambda .
\end{equation}
Passing to the limit in \eqref{eq:absorption-contradiction-sequence}, we
obtain
\begin{equation}\label{eq:limit-invisible-function}
	\|h\|_{L^2(Q)}=1,
	\qquad
	h=0\quad\text{a.e. on }G .
\end{equation}

\medskip
\noindent
\emph{Step 2. We choose an interior product set on which small translations remain inside \(G\).}

Let \(\delta>0\) be the constant in \lemref{lem:product-stability}, applied
to the space \(Y\). By regularity of Lebesgue measure, choose compact sets
\(K_j\subset G_j\) such that
\begin{equation}\label{4.34-7-29-ww}
	|G_j\setminus K_j|<\delta/4,
	\qquad j=1,\ldots,m .
\end{equation}
For the factor containing the time variable, we choose \(K_j\Subset
Q_{\alpha_j}\), where \(Q_{\alpha_j}\) is defined after
\eqref{1.10-7-27-w}.

For \(\theta=(\theta_0,\ldots,\theta_d)\in\mathbb R^{1+d}\), write
\[
	\theta_{\alpha_j}:=(\theta_\ell)_{\ell\in\alpha_j},
	\qquad j=1,\ldots,m .
\]
By continuity of translations in measure, there exists \(\rho>0\) such that,
whenever \(|\theta|<\rho\),
\begin{equation}\label{eq:small-translation-product-prep}
	|K_j\setminus(K_j+\theta_{\alpha_j})|<\delta/4,
	\qquad j=1,\ldots,m .
\end{equation}

By \lemref{lem:nonresonant-translation-choice}, we can choose
\[\xi\in B_{\mathbb R^{1+d}}(0,\rho)\] such that
\eqref{eq:nonresonant-xi-choice} holds. Define
\[
	I_j:=K_j\cap(K_j+\xi_{\alpha_j}),
	\qquad
	I:=\prod_{j=1}^m I_j .
\]
These, along with \eqref{4.34-7-29-ww} and
\eqref{eq:small-translation-product-prep} applied to \(\theta=\xi\), yield
\begin{equation}\label{eq:localized-product-set-I}
	|G_j\setminus I_j|<\delta,
	\qquad
	I\subset G,
	\qquad
	I-\xi\subset G .
\end{equation}

\medskip
\noindent
\emph{Step 3. We construct a difference operator which cancels the \(e_\lambda\)-term.}

For the \(\xi\) chosen in Step~2, let \(T_\xi\) be the bounded operator on
\(Y\) given by \lemref{lem:spectral-translations}.
 Since \(e_\lambda\notin Y\), every element
of \(Y+\operatorname{span}\{e_\lambda\}\) has a unique form
\(g+ae_\lambda\), where \(g\in Y\) and \(a\in\mathbb C\). We extend \(T_\xi\)
to \(Y+\operatorname{span}\{e_\lambda\}\) by
\begin{equation}\label{eq:Txi-extension-on-augmented-space}
	T_\xi(g+ae_\lambda)
	=
	T_\xi g
	+
	a e^{-i\lambda\cdot\xi}e_\lambda,
	\qquad
	g\in Y,\quad a\in\mathbb C .
\end{equation}
Define
\begin{equation}\label{eq:difference-operator-Dxi}
	D_\xi
	=
	T_\xi
	-
	e^{-i\lambda\cdot\xi}
	\operatorname{Id}_{Y+\operatorname{span}\{e_\lambda\}}
	\quad\text{on }Y+\operatorname{span}\{e_\lambda\}.
\end{equation}
By \eqref{4.34-7-29-wwggss}, \eqref{eq:Txi-extension-on-augmented-space} and
\eqref{eq:difference-operator-Dxi}, we have
\begin{equation}\label{eq:Dxi-h-belongs-to-Y}
	D_\xi e_\lambda=0,
	\qquad
	D_\xi h=D_\xi f\in Y .
\end{equation}

By \eqref{eq:localized-product-set-I} and \(G\subset Q\), we have
\(I\subset Q\) and \(I-\xi\subset Q\). Applying
\lemref{lem:spectral-translations} with \(\Omega=I\), we obtain
\begin{equation}\label{eq:Txi-f-local-realization}
	T_\xi f=f(\cdot-\xi)
	\quad\text{in }L^2(I).
\end{equation}
By \eqref{eq:Txi-extension-on-augmented-space} and \eqref{4.25-7-29-w}, we have
\[
	T_\xi e_\lambda=e_\lambda(\cdot-\xi)
	\quad\text{in }L^2(I).
\]
This, along with \eqref{4.34-7-29-wwggss} and
\eqref{eq:Txi-f-local-realization}, yields
\[
	T_\xi h=h(\cdot-\xi)
	\quad\text{in }L^2(I).
\]
The above, together with \eqref{eq:limit-invisible-function} and
\eqref{eq:localized-product-set-I}, implies
\[
	h=0,
	\qquad
	T_\xi h=0
	\quad\text{in }L^2(I).
\]
Combining this with \eqref{eq:difference-operator-Dxi} and
\eqref{eq:Dxi-h-belongs-to-Y}, we obtain
\begin{equation}\label{eq:Dxi-f-vanishes-on-I}
	D_\xi f
	=
	D_\xi h
	=
	0
	\quad\text{in }L^2(I).
\end{equation}

\medskip
\noindent
\emph{Step 4. We use product stability to force the difference to vanish globally.}

By \eqref{eq:Dxi-h-belongs-to-Y}, we have \(D_\xi f\in Y\). Applying
\lemref{lem:product-stability} to \(D_\xi f\), and using
\eqref{eq:localized-product-set-I}, we get
\begin{equation}\label{eq:product-stability-Dxi-f}
	\frac{\gamma}{2}\|D_\xi f\|_{L^2(Q)}^2
	\le
	\|D_\xi f\|_{L^2(I)}^2 .
\end{equation}
By \eqref{eq:Dxi-f-vanishes-on-I} and
\eqref{eq:product-stability-Dxi-f}, we obtain
\begin{equation}\label{eq:Dxi-f-global-zero}
	D_\xi f=0
	\quad\text{in }L^2(Q).
\end{equation}
	
	\medskip
\noindent
\emph{Step 5. We derive a contradiction.}

Since \(f\in Y=X_\Lambda\), write
\[
	f=\sum_{\eta\in\Lambda}c_\eta e_\eta
	\quad\text{in }L^2(Q).
\]
By \eqref{eq:difference-operator-Dxi} and \eqref{eq:Dxi-f-global-zero}, we have
\[
	\sum_{\eta\in\Lambda}
	\bigl(e^{-i\eta\cdot\xi}-e^{-i\lambda\cdot\xi}\bigr)c_\eta e_\eta
	=
	0
	\quad\text{in }L^2(Q).
\]
By uniqueness of the spectral expansion and
\eqref{eq:nonresonant-xi-choice}, it follows that \(c_\eta=0\) for every
\(\eta\in\Lambda\). Hence
\[
	f=0\quad\text{in }L^2(Q).
\]
By \eqref{4.34-7-29-wwggss}, we obtain \(h=ae_\lambda\). Since \(h=0\) a.e.
on \(G\) by \eqref{eq:limit-invisible-function}, while \(|e_\lambda|=1\) and
\(|G|>0\), we have \(a=0\). Thus \(h=0\), contradicting the first equality in
\eqref{eq:limit-invisible-function}. This proves the lemma.
\end{proof}

\subsection[Proof of the double-spectrum observability theorem]{Proof of \texorpdfstring{\thmref{thm:double-spectrum-obs}}{the double-spectrum observability theorem}}

Let \(N_0\) and \(\gamma_0\) be given by
\lemref{lem:high-frequency}. Increasing \(N_0\), if necessary, we may therefore assume that
\[
|n|>N_0 \quad\Longrightarrow\quad \mu_n\neq0.
\]
Hence the two generators in each high-frequency block are distinct. Define
\[
X_0:=\mathcal X_{>N_0},
\]
where \(\mathcal X_{>N}\) is defined in
\subsecref{subsec:measurable-high-frequency}. By  \lemref{lem:high-frequency}, we have
\[
	\|F\|_{L^2(G)}^2
	\ge
	\gamma_0\|F\|_{L^2(Q)}^2,
	\qquad
	F\in X_0.
\]
Moreover, by the definitions of
\(X_\Lambda\) and \(\mathcal X_{>N}\) in
\subsecref{sec:double-spectrum} and
\subsecref{subsec:measurable-high-frequency}, respectively, we have
\[
	X_0=X_{\Lambda_{>N_0}},
\]
where
\[
	\Lambda_{>N_0}
	=
	\{(-\mu_n,n),(\mu_n,n): |n|>N_0\}.
\]

The remaining low-frequency set is finite:
\begin{equation}\label{eq:low-frequency-set}
	\Lambda_{\le N_0}
	=
	\{(-\mu_n,n),(\mu_n,n): |n|\le N_0\}.
\end{equation}
After deleting possible repetitions, write
\[
	\Lambda_{\le N_0}
	=
	\{\lambda_1,\ldots,\lambda_M\}.
\]

We now absorb these finitely many low frequencies one by one. Define
recursively
\begin{equation*}
	\Lambda_j
	=
	\Lambda_{>N_0}\cup\{\lambda_1,\ldots,\lambda_j\},
	\qquad
	X_j=X_{\Lambda_j},
	\qquad
	j=1,\ldots,M .
\end{equation*}
Thus
\begin{equation}\label{4.48-7-31-w}
	\lambda_j\notin\Lambda_{j-1},
	\qquad
	X_j
	=
	X_{j-1}\oplus \operatorname{span}\{e_{\lambda_j}\},
	\qquad j=1,\ldots,M .
\end{equation}

We next explain why \lemref{lem:one-frequency-absorption} can be applied
successively. The initial space \(X_0\) satisfies the observability estimate by
\eqref{4.10-7-28-wgs}. It also satisfies the mixed-norm estimates in Hypothesis (H).
For a finite sum \(F=u_++u_-\in X_0\), write
\[
u_+=e^{ict}e^{it\Delta}\phi,
\qquad
u_-=e^{-ict}e^{-it\Delta}\psi.
\]
Then part~(ii) of \propref{prop:external-inputs}
(and its backward analogue obtained by complex conjugation),
the triangle inequality, and \eqref{equ-Riesz-2} give, for every \(j\),
\[
\begin{aligned}
	\|F\|_{L^4_{z_{\alpha_j}}L^2_{z_{\widehat{\alpha_j}}}}
	&\le
	C_Z\bigl(\|\phi\|_{L^2_x}+\|\psi\|_{L^2_x}\bigr)
	\\
	&\le
	C_{c,d}\|F\|_{L^2(Q)}.
\end{aligned}
\]
Since
\(L^4_{z_{\alpha_j}}L^2_{z_{\widehat{\alpha_j}}}\)
is reflexive and continuously embedded in \(L^2(Q)\), weak compactness
and lower semicontinuity extend the estimate from finite sums to every
\(F\in X_0\).

Suppose that \(X_{j-1}\) satisfies Hypothesis (H). It suffices to consider the case
\[
	e_{\lambda_j}\notin X_{j-1}.
\]
Since \(X_{j-1}\) is closed in \(L^2(Q)\), the distance from
\(e_{\lambda_j}\) to \(X_{j-1}\) is positive. Hence there exists \(C>0\) such
that, for every decomposition
\[
	u=v+ae_{\lambda_j},
	\qquad
	v\in X_{j-1},\quad a\in\mathbb C,
\]
one has
\[
	\|v\|_{L^2(Q)}+|a|
	\le
	C\|u\|_{L^2(Q)} .
\]
Since all mixed norms on the one-dimensional space
\(\operatorname{span}\{e_{\lambda_j}\}\) are equivalent to its
\(L^2(Q)\)-norm, the mixed-norm estimates in Hypothesis (H) remain valid for
\(X_j\), possibly with a larger constant. On the other hand,
\lemref{lem:one-frequency-absorption} gives the observability estimate on
\(X_j\). Therefore \(X_j\) again satisfies Hypothesis (H).

Applying
\lemref{lem:one-frequency-absorption} successively, we obtain constants
\[
\gamma_1,\ldots,\gamma_M>0
\]
such that
\[
\|F\|_{L^2(G)}^2
\ge
\gamma_j\|F\|_{L^2(Q)}^2,
\qquad
\forall F\in X_j,
\qquad
j=1,\ldots,M .
\]

After \(M\) steps of the recursion in \eqref{4.48-7-31-w},
 all lower frequencies in \eqref{eq:low-frequency-set} have been added to the high-frequency space. Hence,
\[
	\Lambda_M
	=
	\Lambda_{>N_0}\cup\Lambda_{\le N_0}
	=
	\Sigma_c,
\]
which yields \(X_M=X_{\Sigma_c}\). By the definition of \(X_\Lambda\) and
\eqref{eq:double-spectrum-space}, we obtain
\[
	X_M=\mathcal X .
\]
Therefore
\[
\|F\|_{L^2(G)}^2
\ge
\gamma_M\|F\|_{L^2(Q)}^2,
\qquad
\forall F\in\mathcal X.
\]
This leads to \eqref{eq:full-frequency-X} with
\(C_G=\gamma_M^{-1}\).

It remains to prove \eqref{eq:real-part-QG}. By
\eqref{eq:shifted-free-expansion} and \eqref{eq:double-spectrum-space},
\[
\operatorname{Re}e^{it(\Delta+c)}\varphi\in\mathcal X,
\qquad
\varphi\in L^2(\mathbb T^d;\mathbb C).
\]
Applying \eqref{eq:full-frequency-X} to
\(F=\operatorname{Re}e^{it(\Delta+c)}\varphi\), we obtain
\[
\|\operatorname{Re}e^{it(\Delta+c)}\varphi\|_{L^2(Q)}^2
\le
C_G
\|\operatorname{Re}e^{it(\Delta+c)}\varphi\|_{L^2(G)}^2 .
\]
This proves \eqref{eq:real-part-QG}. Hence
\thmref{thm:double-spectrum-obs} is proved.

\subsection[Proof of the real-part observability theorem]{Proof of \texorpdfstring{\thmref{thm:full}}{the real-part observability theorem}}
  Let \(\phi\in L^2(\mathbb T^d;\mathbb C)\), and set
\[
	v=\operatorname{Re}e^{it(\Delta+c)}\phi .
\]
By \eqref{eq:real-part-QG}, we have
\begin{equation}\label{eq:real-part-spectral-inequality}
	\|v\|_{L^2(Q)}^2
	\le
	C_G\|v\|_{L^2(G)}^2 .
\end{equation}
It remains to estimate \(\|\phi\|_{L^2_x}\) in terms of \(\|v\|_{L^2(Q)}\) and
the stationary purely imaginary component of \(\phi\).

\medskip
\noindent
\emph{Step 1. We separate the stationary and non-stationary modes.}

Set
\begin{equation}\label{eq:mu-and-stationary-set}
	\mu_n=|n|^2-c,
	\qquad
	S=\{n\in\mathbb Z^d:\mu_n=0\}.
\end{equation}
Then \(\mathcal K_c=\ker(\Delta+c)\) is the span of the Fourier modes indexed
by \(S\). Let \(\Pi_S\) be the Fourier projection onto these modes. We write
\begin{equation}\label{eq:phi-Sc-S-decomposition}
	\phi=\phi_{S^c}+\phi_S,
	\qquad
	\phi_S=\Pi_S\phi,
	\qquad
	\phi_{S^c}=(I-\Pi_S)\phi .
\end{equation}
Correspondingly,
\begin{equation}\label{eq:v-Sc-S-decomposition}
	v=v_{S^c}+v_S,
	\qquad
	v_{S^c}=\operatorname{Re}e^{it(\Delta+c)}\phi_{S^c},
	\qquad
	v_S=\operatorname{Re}e^{it(\Delta+c)}\phi_S .
\end{equation}
We shall prove
\begin{equation}\label{eq:stationary-real-part-target}
	\|\operatorname{Re}\phi_S\|_{L^2_x}^2
	=
	(2\pi)^{-1}\|v_S\|_{L^2(Q)}^2,
\end{equation}
and
\begin{equation}\label{eq:nonstationary-control-target}
	\|\phi_{S^c}\|_{L^2_x}^2
	\le
	C_c\|v_{S^c}\|_{L^2(Q)}^2 .
\end{equation}

\medskip
\noindent
\emph{Step 2. We prove \eqref{eq:stationary-real-part-target}.}

By \eqref{eq:mu-and-stationary-set}, the flow does not rotate the modes in
\(S\):
\begin{equation}\label{eq:stationary-flow-identity}
	e^{it(\Delta+c)}\phi_S=\phi_S .
\end{equation}
Combining \eqref{eq:v-Sc-S-decomposition} and
\eqref{eq:stationary-flow-identity}, we obtain
\begin{equation}\label{eq:vS-norm-identity}
	v_S=\operatorname{Re}\phi_S,
	\qquad
	\|v_S\|_{L^2(Q)}^2
	=
	2\pi\|\operatorname{Re}\phi_S\|_{L^2_x}^2 .
\end{equation}
Thus \eqref{eq:stationary-real-part-target} follows from
\eqref{eq:vS-norm-identity}.

\medskip
\noindent
\emph{Step 3. We prove \eqref{eq:nonstationary-control-target}.}

Write
\[
\phi_{S^c}(x)
=
\sum_{k\notin S}a_ke^{ik\cdot x}.
\]
Then
\[
v_{S^c}(t,x)
=
\sum_{k\notin S}
e^{ik\cdot x}
\frac12
\left(
a_ke^{-i\mu_kt}
+
\overline{a_{-k}}e^{i\mu_kt}
\right).
\]
Since \(k\notin S\), we have \(\mu_k\neq0\), so the two time
exponentials in each block are distinct. Applying the uniform block
Riesz estimate \eqref{equ-Riesz-1} and then Parseval's identity in the
spatial variables, we obtain
\[
\begin{aligned}
	\|v_{S^c}\|_{L^2(Q)}^2
	&\geq
	C_c^{-1}
	\sum_{k\notin S}
	\left(
	|a_k|^2+|a_{-k}|^2
	\right)\\
	&\geq
	C_c^{-1}
	\|\phi_{S^c}\|_{L^2_x}^2,
\end{aligned}
\]
where \(C_c>0\) depends only on \(c\). Hence
\[
\|\phi_{S^c}\|_{L^2_x}^2
\leq
C_c\|v_{S^c}\|_{L^2(Q)}^2.
\]

\medskip
\noindent
\emph{Step 4. We prove \eqref{equ-measurable-realpart}.}
 
The function \(v_{S^c}\) has spatial Fourier support in \(S^c\), while
\(v_S\) has spatial Fourier support in \(S\). Hence, by
\eqref{eq:v-Sc-S-decomposition},
\begin{equation}\label{eq:v-orthogonal-decomposition}
	v_{S^c}\perp v_S
	\quad\text{in }L^2(Q),
	\qquad
	\|v\|_{L^2(Q)}^2
	=
	\|v_{S^c}\|_{L^2(Q)}^2
	+
	\|v_S\|_{L^2(Q)}^2 .
\end{equation}
By \eqref{eq:phi-Sc-S-decomposition} and the orthogonality of the Fourier
modes in \(S\) and \(S^c\), we have
\[
\|\phi\|_{L^2_x}^2
=
\|\phi_{S^c}\|_{L^2_x}^2
+
\|\phi_S\|_{L^2_x}^2 .
\]
Moreover,
\[
\|\phi_S\|_{L^2_x}^2
=
\|\operatorname{Re}\phi_S\|_{L^2_x}^2
+
\|\operatorname{Im}\phi_S\|_{L^2_x}^2 .
\]
Using \eqref{eq:nonstationary-control-target},
\eqref{eq:stationary-real-part-target}, and
\eqref{eq:v-orthogonal-decomposition}, and enlarging \(C_c\) if necessary, we
obtain
\begin{equation}\label{eq:pre-real-part-controls-initial-data}
\|\phi\|_{L^2_x}^2
\le
C_c\|v\|_{L^2(Q)}^2
+
\|i\,\operatorname{Im}\phi_S\|_{L^2_x}^2 .
\end{equation}
By the definition of the projection
\(P_{(\mathcal K_c)_{\operatorname{Im}}}\) in
\eqref{eq:P-KcIm-definition}, we have
\[
i\,\operatorname{Im}\phi_S
=
P_{(\mathcal K_c)_{\operatorname{Im}}}\phi .
\]
Combining this identity with \eqref{eq:pre-real-part-controls-initial-data},
we obtain
\begin{equation}\label{eq:real-part-controls-initial-data}
	\|\phi\|_{L^2_x}^2
	\le
	C_c\|v\|_{L^2(Q)}^2
	+
	\|P_{(\mathcal K_c)_{\operatorname{Im}}}\phi\|_{L^2_x}^2 .
\end{equation}
Finally, combining \eqref{eq:real-part-controls-initial-data} with
\eqref{eq:real-part-spectral-inequality}, we obtain
\[
\|\phi\|_{L^2_x}^2
\le
C_cC_G
\|\operatorname{Re}e^{it(\Delta+c)}\phi\|_{L^2(G)}^2
+
\|P_{(\mathcal K_c)_{\operatorname{Im}}}\phi\|_{L^2_x}^2 .
\]
Since \(\phi\in L^2(\mathbb T^d;\mathbb C)\) was arbitrary, this proves
\thmref{thm:full}.

We end this subsection by proving \corref{cor:real-part-iff}.

\begin{proof}[Proof of \corref{cor:real-part-iff}]
We first show that
\[
	c\notin\{|n|^2:n\in\mathbb Z^d\}
\]
implies the observability estimate without correction. In this case
\(\mathcal K_c=\ker(\Delta+c)=\{0\}\), and hence
\[
	P_{(\mathcal K_c)_{\operatorname{Im}}}=0 .
\]
Thus \eqref{equ-measurable-realpart} gives the desired estimate without
correction.

We next show that the observability estimate without correction implies
\[
	c\notin\{|n|^2:n\in\mathbb Z^d\}.
\]
Suppose, by contradiction, that
\(c=|n_0|^2\) for some \(n_0\in\mathbb Z^d\). Set
\[
w(x):=\cos(n_0\cdot x),
\qquad
\phi:=iw.
\]
Then \(0\neq w\in\ker(\Delta+c)\), and hence
\[
e^{it(\Delta+c)}\phi=\phi,
\qquad
\operatorname{Re}e^{it(\Delta+c)}\phi=0
\quad\text{in }L^2(Q;\mathbb R).
\]
The observability estimate without correction would imply
\[
	\|\phi\|_{L^2_x}^2\le 0,
\]
which contradicts \(w\neq0\). This proves the corollary.
\end{proof}

\section{Appendix: a coercivity estimate for nonnegative potentials}
\label{sec:appendix-coercivity}

We include the following elementary quantitative coercivity estimate for
completeness. It is a standard consequence of a low-frequency uncertainty
principle together with the elementary fact that the gradient controls high
frequencies.

We recall the following multidimensional Turán inequality. It will be used to
control low Fourier modes from their mass on a set of positive measure. There
exists a constant \(C_d>0\), depending only on \(d\), such that for every
measurable set \(E\subset\mathbb T^d\) with \(|E|>0\), every \(N\geq1\), and
every trigonometric polynomial
\[
p(x)=\sum_{|n|\leq N} a_n e^{in\cdot x},
\]
one has
\begin{equation}\label{eq:turan-recalled}
	\|p\|_{L^2(E)}^2
	\geq
	\left(
	\frac{|E|}{C_d|\mathbb T^d|}
	\right)^{2C_dN}
	\|p\|_{L^2(\mathbb T^d)}^2 .
\end{equation}
See \cite{Turan-Fontes}.

\begin{lemma}[Coercivity for nonnegative potentials]
	\label{lem-positive-V}
	Assume that
	\[
	V\in C(\mathbb T^d;\mathbb R),
	\qquad
	V\geq0,
	\qquad
	\int_{\mathbb T^d}V(x)\,\d x>0.
	\]
	Then there exists \(\kappa_V>0\) such that
	\[
	\int_{\mathbb T^d}|\nabla u|^2\,\d x
	+
	\int_{\mathbb T^d}V(x)|u|^2\,\d x
	\geq
	\kappa_V
	\|u\|_{L^2(\mathbb T^d)}^2,
	\qquad
	u\in H^1(\mathbb T^d).
	\]
	More precisely, if \(\delta>0\) is such that
	\[
	E_\delta
	:=
	\{x\in\mathbb T^d:V(x)\geq\delta\}
	\]
	has positive measure, then one may take
	\[
	\kappa_V
	=
	\frac12\min\{1,\delta\}
	\left(
	\frac{|E_\delta|}
	{C_d|\mathbb T^d|}
	\right)^{4C_d}.
	\]
\end{lemma}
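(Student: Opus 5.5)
The plan is to split $u$ into low and high Fourier modes and play the two terms of the quadratic form against each other, following the hint given before the lemma: the potential term controls low frequencies via Turán's inequality \eqref{eq:turan-recalled}, and the gradient controls high frequencies. Fix $\delta>0$ with $|E_\delta|>0$; such a $\delta$ exists because $V\ge0$ is continuous with $\int V>0$. Take $u\in H^1(\mathbb T^d)$ with Fourier expansion $u=\sum_n a_ne^{in\cdot x}$, and set
\[
p=\sum_{|n|\le 2}a_ne^{in\cdot x},\qquad r=u-p .
\]
Fixing the cutoff at $N=2$ makes the Turán constant $(|E_\delta|/(C_d|\mathbb T^d|))^{4C_d}$, which matches the stated $\kappa_V$.

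First I bound the two pieces of the quadratic form from below. By Parseval,
\[
\|\nabla u\|_{L^2}^2=\sum_n|n|^2|a_n|^2\ge \|r\|_{L^2}^2,
\]
since every mode in $r$ has $|n|\ge1$; in fact $|n|\ge 2$ there, but $1$ is enough. Since $V\ge0$ and $V\ge\delta$ on $E_\delta$, we have
\[
\int V|u|^2\ge \delta\|u\|_{L^2(E_\delta)}^2 .
\]

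Next I transfer the Turán bound from $p$ to $u$. Turán gives $\|p\|_{L^2(E_\delta)}^2\ge\beta\|p\|_{L^2}^2$ with $\beta:=(|E_\delta|/(C_d|\mathbb T^d|))^{4C_d}\le1$. Because $u=p+r$, the reverse triangle inequality on $E_\delta$ gives
\[
\|u\|_{L^2(E_\delta)}\ge \sqrt\beta\,\|p\|_{L^2}-\|r\|_{L^2}.
\]

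The last step is to combine these, and the only delicate part is the constant bookkeeping. Set $m=\min\{1,\delta\}$ and use $(a-b)^2\ge a^2/2-b^2$. Then
\[
\|\nabla u\|^2+\int V|u|^2\ \ge\ \|r\|^2+m\Bigl(\tfrac{\beta}{2}\|p\|^2-\|r\|^2\Bigr).
\]
The $-m\|r\|^2$ term threatens to cancel the gradient contribution entirely when $m=1$. To avoid this I split the gradient term in half, $\|\nabla u\|^2\ge \tfrac12\|r\|^2+\tfrac12\|\nabla u\|^2$, or equivalently weight the two estimates differently, for instance using the inequality $(a-b)^2\ge \theta a^2-\tfrac{\theta}{1-\theta}b^2$ with a small $\theta$. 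This keeps a positive multiple of $\|r\|^2$, and then orthogonality $\|u\|^2=\|p\|^2+\|r\|^2$ gives $\kappa_V\,\|u\|^2$. If the exact constant $\tfrac12 m\beta$ does not come out of this bookkeeping, I would accept a comparable constant; the qualitative conclusion, and hence $\mathcal K=\{0\}$ via $\langle Hu,u\rangle=\|\nabla u\|^2+\int V|u|^2$, is unaffected.
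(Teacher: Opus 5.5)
Your decomposition is the paper's ($N=2$, Turán on the low modes, gradient on the high modes). The final bookkeeping, however, has a real gap, and it comes from discarding the information that closes the argument.

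With $\|\nabla u\|^2\ge\|r\|^2$ and $\int V|u|^2\ge m\bigl(\tfrac{\beta}{2}\|p\|^2-\|r\|^2\bigr)$, the coefficient of $\|r\|^2$ is $1-m$. This vanishes when $\delta\ge 1$, so no bound by $\|u\|^2$ follows.

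Your first remedy does not repair this. Writing $\|\nabla u\|^2\ge\tfrac12\|r\|^2+\tfrac12\|\nabla u\|^2$ and then using $\tfrac12\|\nabla u\|^2\ge\tfrac12\|r\|^2$ returns exactly the same deficit $1-m$. It is the potential term, or the Young splitting, that must be weighted, not the gradient term.

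Your second remedy does work: $(a-b)^2\ge\theta a^2-\tfrac{\theta}{1-\theta}b^2$ with, say, $\theta=\tfrac13$ gives $\|\nabla u\|^2+\int V|u|^2\ge\tfrac12\|r\|^2+\tfrac{m\beta}{3}\|p\|^2\ge\tfrac{m\beta}{3}\|u\|^2$. That suffices for the qualitative claim and for $\mathcal K=\{0\}$. It does not give the constant $\kappa_V=\tfrac12 m\beta$ that the lemma explicitly asserts.

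The missing point is that ``$|n|\ge 1$ is enough'' is false for your splitting. The high-frequency information you set aside fixes it at once: every mode of $r$ has $|n|>2$, hence $|n|^2\ge 5$, so $\|\nabla u\|^2\ge 4\|r\|^2$. Then
\[
\|\nabla u\|^2+\int V|u|^2\ \ge\ (4-m)\|r\|^2+\tfrac{m\beta}{2}\|p\|^2\ \ge\ \tfrac{m\beta}{2}\bigl(\|r\|^2+\|p\|^2\bigr)=\tfrac{m\beta}{2}\|u\|^2,
\]
because $\tfrac{m\beta}{2}\le\tfrac12<3\le 4-m$. This is exactly the stated $\kappa_V$.

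This is essentially the paper's argument. The paper first factors out $\min\{1,\delta\}$ and proves $\|\nabla u\|^2+\|u\|_{L^2(E)}^2\ge\tfrac{\theta_E}{2}\|u\|^2$ for any $E$ of positive measure. It uses the cruder inequality $|a+b|^2\ge\tfrac12|a|^2-2|b|^2$ and absorbs the resulting loss $(2+\tfrac{\theta_E}{2})\|r\|^2\le\tfrac52\|r\|^2$ into $4\|r\|^2$.
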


\begin{proof}
	We first prove a localized observation estimate. Let
	\(E\subset\mathbb T^d\) be measurable with \(|E|>0\). We claim that
	\begin{equation}\label{eq:appendix-observation}
		\int_{\mathbb T^d}|\nabla u|^2\,\d x
		+
		\int_E |u|^2\,\d x
		\geq
		c_E\int_{\mathbb T^d}|u|^2\,\d x,
		\qquad
		\forall u\in H^1(\mathbb T^d),
	\end{equation}
	with
	\[
	c_E=
	\frac12
	\left(
	\frac{|E|}{C_d|\mathbb T^d|}
	\right)^{4C_d}.
	\]
	
	Let \(P_{\leq2}\) be the Fourier projection onto the frequencies
	\(|n|\leq2\). Then
	\[
	P_{\leq2}u=\sum_{|n|\leq2}a_ne^{in\cdot x}
	\]
	if  \( u(x)=\sum_{n\in\mathbb Z^d}a_ne^{in\cdot x}\). 
	Applying \eqref{eq:turan-recalled} to \(P_{\leq2}u\), we obtain
	\[
	\|P_{\leq2}u\|_{L^2(E)}^2
	\geq
	\theta_E\|P_{\leq2}u\|_{L^2(\mathbb T^d)}^2,
	\qquad
	\theta_E:=
	\left(
	\frac{|E|}{C_d|\mathbb T^d|}
	\right)^{4C_d}.
	\]
	After increasing \(C_d\) if necessary, we may assume that
	\(\theta_E\leq1\).
	
	Since \(u=P_{\leq2}u+(I-P_{\leq2})u,\)	the elementary inequality \(|a+b|^2\geq \frac12|a|^2-2|b|^2 \)	gives
	\begin{align*}
		\int_E |u|^2\,\d x
		&\geq
		\frac12\int_E |P_{\leq2}u|^2\,\d x
		-
		2\int_E |(I-P_{\leq2})u|^2\,\d x
		\\
		&\geq
		\frac{\theta_E}{2}
		\|P_{\leq2}u\|_{L^2(\mathbb T^d)}^2
		-
		2\|(I-P_{\leq2})u\|_{L^2(\mathbb T^d)}^2
		\\
		&=
		\frac{\theta_E}{2}
		\|u\|_{L^2(\mathbb T^d)}^2
		-
		\left(2+\frac{\theta_E}{2}\right)
		\|(I-P_{\leq2})u\|_{L^2(\mathbb T^d)}^2 .
	\end{align*}
	The high-frequency term is absorbed by the gradient term. Indeed, since
	\((I-P_{\leq2})u\) contains only frequencies \(|n|>2\),
	\[
	\int_{\mathbb T^d}|\nabla u|^2\,\d x
	\geq
	4\|(I-P_{\leq2})u\|_{L^2(\mathbb T^d)}^2 .
	\]
Since \(\theta_E\leq1\), \(2+\frac{\theta_E}{2}\leq \frac52<4\), and hence
	\[
	\int_{\mathbb T^d}|\nabla u|^2\,\d x
	+
	\int_E |u|^2\,\d x
	\geq
	\frac{\theta_E}{2}
	\|u\|_{L^2(\mathbb T^d)}^2,
	\]
	which proves \eqref{eq:appendix-observation}.
	
	We now apply \eqref{eq:appendix-observation} to \(E_\delta\). Since
	\(V\geq\delta\) on \(E_\delta\),
	\[
	\int_{\mathbb T^d}V(x)|u|^2\,\d x
	\geq
	\delta\int_{E_\delta}|u|^2\,\d x .
	\]
	Therefore,
	\begin{align*}
		\int_{\mathbb T^d}|\nabla u|^2\,\d x
		+
		\int_{\mathbb T^d}V(x)|u|^2\,\d x
		&\geq
		\min\{1,\delta\}
		\left(
		\int_{\mathbb T^d}|\nabla u|^2\,\d x
		+
		\int_{E_\delta}|u|^2\,\d x
		\right)                                                        \\
		&\geq
		\frac12\min\{1,\delta\}
		\left(
		\frac{|E_\delta|}{C_d|\mathbb T^d|}
		\right)^{4C_d}
		\int_{\mathbb T^d}|u|^2\,\d x .
	\end{align*}
	This is the desired estimate.
\end{proof}

\section*{Acknowledgements}
{\small G.W. was partially supported by the National Natural Science Foundation of China (No.~12371450) and the New Cornerstone Science Foundation.  M.W. was partially supported by the National Natural Science Foundation of China (No.~12571260), the Natural Science Foundation of Hunan Province (No.~2026JJ20012), and the Hunan Basic Science Research Center for Mathematical Analysis (2024JC2002). }

\end{document}